\documentclass[12pt]{article}
\usepackage{amsmath, amsthm, amssymb}
\usepackage{amscd}
\usepackage[all,cmtip]{xy}
\usepackage{ulem}
\usepackage[boxsize=3pt]{ytableau}
\usepackage[left=1in,top=1in,right=1in,bottom=1in,nohead]{geometry}
\usepackage{graphicx}
\usepackage{tikz}
\usepackage{stmaryrd}
\usetikzlibrary{matrix,calc}
\usepackage{mathrsfs}
\usepackage{cite}

\usepackage[color]{showkeys}


\definecolor{refkey}{rgb}{1,1,1}
\definecolor{labelkey}{rgb}{1,1,1}
\definecolor{cite}{rgb}{0.9451,0.2706,0.4941}
\definecolor{ruri}{rgb}{0.0078,0.4022,0.8010}

\usepackage{hyperref}


\newtheorem{thm}{Theorem}[section]
\newtheorem{lem}[thm]{Lemma}
\newtheorem{defn}[thm]{Definition}
\newtheorem{cor}[thm]{Corollary}

\newtheorem{prop}[thm]{Proposition}
\newtheorem{obs}[thm]{Observation}
\newtheorem{rmk}[thm]{Remark}
\newtheorem{ex}[thm]{Example}
\newtheorem{quest}[thm]{Question}

\newcommand{\bi}{\begin{itemize}}
\newcommand{\ei}{\end{itemize}}
\newcommand{\ben}{\begin{enumerate}}
\newcommand{\een}{\end{enumerate}}
\newcommand{\be}{\begin{equation}}
\newcommand{\ee}{\end{equation}}
\newcommand{\bea}{\begin{eqnarray}}
\newcommand{\eea}{\end{eqnarray}}
\newcommand{\bal}{\begin{align}}
\newcommand{\eal}{\end{align}}
\newcommand{\ba}{\begin{array}}
\newcommand{\ea}{\end{array}}
\newcommand{\nn}{\nonumber}
\newcommand{\sm}[1]{\scalebox{.65}{$#1$}}

\newcommand{\catname}[1]{{\normalfont\textbf{#1}}}
\newcommand{\mop}[1]{{\normalfont\text{#1}}}
\newcommand{\Fin}{\mathcal{F}}
\newcommand{\QFin}{\mathbb{Q}\Fin}
\newcommand{\Sym}{\mathcal{S}}

\newcommand{\Bij}{\mathcal{S}}

\newcommand{\Vect}{\mathcal{V}}
\newcommand{\FRep}{\Vect^{\Fin}}
\newcommand{\UP}{\Vect^{\Fin}}
\newcommand{\BRep}{\Vect^{\Bij}}

\newcommand{\tensorpower}{\scalebox{1.2}{$\boldmath{\boldmath{\otimes}}$\hspace{-.8pt}}}

\newcommand{\fg}{\Vect^{\Fin}_{\scalebox{.6}{$f \hspace{-1pt}g$}}}
\newcommand{\Sp}{S\hspace{-1pt}p}
\newcommand{\fn}[1]{\scalebox{.7}{$\fbox{\texttt{#1}}$}}

\newcommand{\Hom}{\mop{Hom}}

\newcommand{\Ext}{\mop{Ext}}
\newcommand{\Aut}{\mop{Aut}}

\newcommand{\Tr}{\mop{Tr}}
\newcommand{\im}{\mop{Im}}
\newcommand{\coker}{\mop{coker}}
\newcommand{\Iso}{\mop{Iso}}
\newcommand{\Lan}{\mop{Lan}}

\newcommand{\Res}{\mop{Res}}
\newcommand{\Ind}{\mop{Ind}}

\newcommand{\sk}{\mop{sk}}

\makeatletter
\newenvironment{amatrix}{%
  \left\langle
  \env@matrix
}{%
  \endmatrix
  \right\rangle
}

\newcommand{\amatrixarrow}[3][.5ex]{%
  \def\amatrixarrow@shift{#1}%
  \def\amatrixarrow@left{#2}%
  \def\amatrixarrow@right{#3}%
  \collect@body\amatrixarrow@next
}
\newcommand*{\amatrixarrow@next}[1]{%
  \mathpalette{\amatrixarrow@}{#1}%
}
\newcommand*{\amatrixarrow@}[2]{%
  \sbox0{$\m@th#1\begin{amatrix}#2\end{amatrix}$}%
  \sbox2{$\m@th#1\begin{matrix}#2\end{matrix}$}%
  \sbox4{$\m@th
    \amatrixarrow@style{#1}%
    \amatrixarrow@left
  $}%
  \sbox6{$\m@th
    \amatrixarrow@style{#1}%
    \amatrixarrow@right
  $}%
  \sbox8{$\m@th\amatrixarrow@style{#1}\text{\kern\amatrixarrow@shift}$}%
  \dimen0=.5\dimexpr\wd0-\wd2\relax
  \vtop{%
    \hbox{%
      \kern.5\dimexpr\wd4-\dimen0\relax
      \copy0 %
    }%
    \kern\wd8 %
    \nointerlineskip
    \hbox to \dimexpr\wd0+.5\wd4+.5\wd6-\dimen0\relax{%
      \copy4 %
      \hfill
      \sbox2{$\m@th
        \amatrixarrow@style{#1}%
        {}\xrightarrow{}{}%
      $}
      $\m@th
        \amatrixarrow@style{#1}%
        \xrightarrow{%
          \kern\dimexpr\wd0-.5\wd4-.5\wd6-\dimen0-\wd2\relax
        }%
      $%
      \hfill
      \copy6 %
    }%
  }%
}
\newcommand*{\amatrixarrow@style}[1]{%
  \ifx#1\displaystyle
    \scriptstyle
  \else\ifx#1\textstyle
    \scriptstyle
  \else
    \scriptscriptstyle
  \fi\fi
}


\newcommand{\bmatrixarrow}[3][.5ex]{%
  \def\bmatrixarrow@shift{#1}%
  \def\bmatrixarrow@left{#2}%
  \def\bmatrixarrow@right{#3}%
  \collect@body\bmatrixarrow@next
}
\newcommand*{\bmatrixarrow@next}[1]{%
  \mathpalette{\bmatrixarrow@}{#1}%
}
\newcommand*{\bmatrixarrow@}[2]{%
  \sbox0{$\m@th#1\begin{bmatrix}#2\end{bmatrix}$}%
  \sbox2{$\m@th#1\begin{matrix}#2\end{matrix}$}%
  \sbox4{$\m@th
    \bmatrixarrow@style{#1}%
    \bmatrixarrow@left
  $}%
  \sbox6{$\m@th
    \bmatrixarrow@style{#1}%
    \bmatrixarrow@right
  $}%
  \sbox8{$\m@th\bmatrixarrow@style{#1}\text{\kern\bmatrixarrow@shift}$}%
  \dimen0=.5\dimexpr\wd0-\wd2\relax
  \vtop{%
    \hbox{%
      \kern.5\dimexpr\wd4-\dimen0\relax
      \copy0 %
    }%
    \kern\wd8 %
    \nointerlineskip
    \hbox to \dimexpr\wd0+.5\wd4+.5\wd6-\dimen0\relax{%
      \copy4 %
      \hfill
      \sbox2{$\m@th
        \bmatrixarrow@style{#1}%
        {}\xrightarrow{}{}%
      $}
      $\m@th
        \bmatrixarrow@style{#1}%
        \xrightarrow{%
          \kern\dimexpr\wd0-.5\wd4-.5\wd6-\dimen0-\wd2\relax
        }%
      $%
      \hfill
      \copy6 %
    }%
  }%
}
\newcommand*{\bmatrixarrow@style}[1]{%
  \ifx#1\displaystyle
    \scriptstyle
  \else\ifx#1\textstyle
    \scriptstyle
  \else
    \scriptscriptstyle
  \fi\fi
}

\DeclareRobustCommand{\stirling}{\genfrac\{\}{0pt}{}}


\title{Uniformly Presented Vector Spaces}
\author{ John D. Wiltshire-Gordon\thanks{I am pleased to thank: Benson Farb and Thomas Church for introducing me to  representation stability and inspiring me with their original paper \cite{ChurchFarbRTHS}; my advisor David Speyer for providing knowledge, encouragement, and wisdom; also, Mitya Boyarchenko, Jordan Ellenberg, Gene Kopp, Vipul Naik, Andrew Snowden, Julian Rosen, and Steven Sam for valuable conversations.  I humbly thank the NSF for generous support through a Graduate Research Fellowship.}\; \; \; \; \texttt{johnwg@umich.edu}}
\begin{document}
\maketitle
\begin{abstract}
Gaussian elimination answers any question about a finitely presented vector space.  However, a ``uniform family'' of such presentations---given as generic relations among an unspecified number of generators---is susceptible to elimination only once the number of generators is fixed.  We develop a theory of ``uniformly presented vector spaces'' to compute with these uniform families, introducing a formalism of finitely generated functors from the category of finite sets $\Fin$ to the category of finite dimensional $\mathbb{Q}$-vector spaces $\Vect$.  We show that these representations have finite length and polynomial dimension away from the empty set, and produce finite leftward resolutions by manageable functors.

\end{abstract}
\section{Introduction} \label{intro}
\subsection{A first example}
Let $U=\langle x_1, x_2, \ldots, x_n \rangle$ be an $n$-dimensional vector space over $\mathbb{Q}$, and let $V$ be the following quotient of $U \otimes U$:
\be \label{origw}
V= \Big \langle x_i \otimes x_j \Big \rangle \; \bigg / \; \Big \langle x_i \otimes x_i + x_j \otimes x_j + x_k \otimes x_k - 3\left(x_i \otimes x_j - x_i \otimes x_k + x_j \otimes x_k \right) \Big \rangle,
\ee
where the indexing variables $i$, $j$, and $k$ are (not-necessarily distinct) elements of the indexing set  $\{1, 2, \ldots , n \}$.  This vector space $V$ is a first example of a ``uniformly presented'' vector space.  Here ``uniformly'' is short for ``uniformly in $n$,'' since the definition of $V$ depends on $n$ in a consistent way. \\ \\
\noindent
The essential characteristic is that the indexing variables are allowed to range freely over a finite set---there are no mentions of $x_{i+1}$, for example, or any other expressions that assume extra structure on the set $\{1, 2, \ldots , n \}$. \\ \\
The explicit presentation for $V$ allows us to compute its dimension for any particular $n$ by row reduction: build a matrix $M$ whose $n^3$ rows come from all possible substitutions
\be \nn
(i,j,k) \in \{1, 2, \ldots, n \} \times \{1, 2, \ldots, n \} \times \{1, 2, \ldots, n \};
\ee
the dimension of $V$ will be $n^2 - \mop{rank} \left( M \right)$.  \\ \\
\noindent
Experimentation with various $n$ suggests a pattern for the dimension of $V$.  Indeed, an elementary but \textit{ad hoc} calculation gives
\be  \nn
\dim V = n.
\ee
Permuting the set $\{1, 2, \ldots, n \}$ induces an action of the symmetric group $\mathcal{S}_n$ on the vector space $V$.  A similar calculation gives a full report on the character $\chi^V$ associated to $V$:
\be \label{easychar}
\chi^V(\sigma) = \# \{ \mbox{fixed points of $\sigma$} \}.
\ee
The fundamental goal of this paper is to produce a general algorithm taking a uniform presentation like (\ref{origw}) and computing a simple formula for its character like (\ref{easychar}).
\subsection{The Formalism of $\Fin$-Representations}
Example (\ref{origw}) has a great deal of exploitable structure.  In particular, any function relating indexing sets
\be \nn
f: \{1, 2, \ldots, p \} \longrightarrow \{1, 2, \ldots, q \}
\ee
induces a linear map
\be \nn
f_* : V[p] \longrightarrow V[q],
\ee
where $V[n]$ indicates the vector space associated with a particular choice of $n$.  In other words, $V$ is a \textit{functor} from the category of finite sets to the category of finite dimensional vector spaces over the rational numbers:
\be \nn
V : \Fin \longrightarrow \Vect.
\ee
Not all such functors deserve to be called ``uniformly presented vector spaces;'' roughly speaking, only those which can be written as a quotient
\be \nn
\langle \mbox{finite list of generic generators} \rangle \; \bigg / \; \langle \mbox{finite list of generic relations} \rangle.
\ee
Later we will give an intrinsic characterization of those functors $V : \Fin \longrightarrow \Vect$ which can be written in this form. \\ \\
\noindent
Setting notation for initial segments of the natural numbers
\be \nn
[p] = \{ 1, 2, 3, \ldots, p \},
\ee
we reconcile the notation $V[p]$ and the category-theoretic tradition of applying a functor to an object by writing them adjacently. \\ \\
\noindent
Classical representation theory of a group $G$ can be placed in a parallel framework, if so desired: $G$ can be considered a category with one object, and a finite dimensional representation of $G$ defines an object in the category $\Vect^G$ of functors from $G$ to vector spaces.  From this point of view, it is natural to call a functor from $\Fin$ to $\Vect$ a ``representation of the category $\Fin$,'' or an ``$\Fin$-representation.'' 
\subsection{Summary of results}
The graded dimension of an $\Fin$-representation $V$ is a sequence of natural numbers:
\be \nn
\dim V[0], \;\; \dim V[1], \;\; \dim V[2], \; \; \dim V[3], \;\; \dim V[4], \;\; \ldots, \;\; \dim V[n], \;\; \ldots
\ee
If this sequence is bounded above by a polynomial in $n$, we prove:
\begin{itemize}
\item it actually coincides with a polynomial when $n \geq 1$;
\item in that range, $\chi^V(\sigma)$ is a polynomial in the fixed-point counts of the powers of $\sigma$;
\item $V$ can be written in the form of a uniformly presented vector space.
\end{itemize}
These results follow from structural results about the category of uniformly presented vector spaces, considered as $\Fin$-representations.  For example:
\begin{itemize}
\item Uniformly presented vector spaces have finite length as $\Fin$-representations;
\item Uniformly presented vector spaces form an abelian subcategory of $\Vect^{\Fin}$;
\item Uniformly presented vector spaces admit a finite, algorithmically computable resolution by easier representations.
\end{itemize}
This last bullet (once we make it precise) is the main theorem.  We give a preliminary version as Theorem \ref{mainA} and a refined version as Theorem \ref{mainB}.
\begin{rmk}
Bullets 1, 4, and 5 can be deduced from the classical Dold-Kan theorem \cite{doldclassical}.  However, such a proof forgets the actions of the symmetric groups!  We prefer to build a more general theory, keeping these group actions at the forefront.
\end{rmk}
\section{Prior work}
\subsection{The category of $\mathcal{FI}$-modules}
Church, Ellenberg, and Farb introduced $\mathcal{FI}$-modules in \cite{FIMod} in order to clarify the ``representation stability'' phenomena Church and Farb had observed in \cite{ChurchFarbRTHS}.  An $\mathcal{FI}$-module is a functor from the category of finite sets with injections to some category of modules.  In \cite{FINoether}, working with Nagpal, they obtain strong results regarding the eventual behavior of finitely generated $\mathcal{FI}$-modules over any Noetherian ring $R$.  The case $R = \mathbb{Z}$ is of utmost importance, but the case $R=\mathbb{Q}$ is already subtle.\\ \\
\noindent
Of course, any $\Fin$-representation restricts to an $\mathcal{FI}$-module over $\mathbb{Q}$.  However, it turns out that the representation theory of $\Fin$ is much more rigid than that of $\mathcal{FI}$ and so this restriction map has little utility.  Conversely, finding an $\mathcal{FI}$-module is much easier than finding an $\Fin$-representation; after all, there are fewer induced maps to produce!  For example, $SL_n(\mathbb{Z})$ is naturally $\mathcal{FI}$-functorial in the variable $n$: $\mathcal{FI}$ morphisms induce inclusions of the corresponding groups.  It is clear, however, that this functor has no chance of extending to $\Fin$.  Thus, the group homology of $SL_n(\mathbb{Z})$ is an $\mathcal{FI}$-module but not an $\Fin$-module.  The $\mathcal{FI}$ structure of $SL_n({\mathbb{Z}})$ is behind the scenes in Putman's work \cite{homcong}, and is explicitly invoked in a subsequent paper with Church and Farb in \cite{unstable}. \\ \\
\noindent
The paper \cite{FIMod} also introduces\footnote{The calculations giving semisimplicity and classifying irreducibles are essentially present in \cite{FISharp85}, which uses the language of semigroups instead of categories.} the source category $\mathcal{FI}^{\#}$, the category of finite sets with partial injections.  The representation theory of this category is much more comparable with that of $\Fin$.  The are marked differences, however.  It is easier\footnote{although there is no actual restriction functor from $\Vect^{\mathcal{FI}^{\#}}$ to $\Vect^{\Fin}$. } and more natural to look for $\Fin$-modules in nature.  On the other hand, once you have an $\mathcal{FI}^{\#}$-module, the theory is cleaner since the category of $\mathcal{FI}^{\#}$-modules is semisimple and the category of $\Fin$-modules is not. \\ \\
\noindent
The moral of this story: ``take care with source categories!''  Even among categories built out of symmetric groups---as $\mathcal{FI}$, $\Fin$, and $\mathcal{FI}^{\#}$ are---the stories diverge in complexity and character.
\subsection{Cosimplicial vector spaces}
A classical construction known as the ``Dold-Kan correspondence'' \cite{doldclassical} gives a complete description of the representation theory of $\Delta$, the category of non-empty finite ordinals with order-preserving maps.  Such representations are known as ``cosimplicial vector spaces.''  The Dold-Kan correspondence provides an explicit equivalence of categories between $\Vect^{\Delta}$ and the category of cochain complexes in $\Vect$ supported in non-negative degree.  \\ \\ \noindent
Restricting an $\Fin$-representation $V$ to a $\Delta$-representation forgets the action of the symmetric groups, but it does lend more flexibility.  For example, arguments relying only on $\Delta$-structure extend to any abelian target category.  Also, since restriction to $\Delta$ preserves the dimension sequence, dimension computations are often simpler using only $\Delta$-structure. \\ \\ \noindent
Recent work of Lack and Street \cite{comboeq} generalizes the Dold-Kan construction to include the case $\mathcal{FI^{\#}}$, and other similar cases.  Their work represents a connection to pure category theory.
\subsection{Functor homology}
The popularity of the category of \textit{pointed} finite sets $\Gamma$ as a source category goes back to Segal's infinite loopspace machine \cite{SegalClassical}.  Motivated by this work, Pirashvili studied functors from $\Gamma$ to the category of (possibly non-abelian) groups\cite{PiraDoldKan}.  Subsequent work showed the power and flexibility of homological algebra in a category of functors.  For example, Pirashvili gives a construction of Hochschild and cyclic homology in \cite{HochFunctorHomology}.  \\ \\ \noindent
Working in this vein, Antosz and Betley study the homological algebra of $\Gamma$-representations in \cite{HomAlgGamma}, but over a finite field $\mathbb{F}_q$.  The story seems harder there than in characteristic zero.

\subsection{Twisted commutative algebras}
Sam and Snowden introduced twisted commutative algebras in \cite{GLEquivariantInfinite}, placing the category of $\mathcal{FI}$-modules in a much larger context.  Although there seems to be no direct interaction between twisted commutative algebras and the $\Fin$-representations of this paper, the results do bear striking similarities.  One fundamental difference between the two theories is the role of the monoidal structure on $\mathcal{FI}$ provided by disjoint union.  The theory of $\Fin$-modules makes no reference to this structure, while twisted commutative algebras use it extensively.  \\ \\ \noindent
Twisted commutative algebras can also handle many important sequences of groups besides the symmetric groups $\Bij_n$; see \cite{StabilityReps}.

\section{Summary of notation}
Apart from the final batch of bullet points, the reader is encouraged not to skip this section due to the ubiquity of unusual notation in the rest of the paper.
\subsubsection{Category theory}
In what follows, let $\mathcal{C}$ and $\mathcal{D}$ be categories, and let $X \overset{f}{\longrightarrow} Y \overset{g}{\longrightarrow} Z$ be composable arrows in $\mathcal{C}$.
\begin{itemize}
\item The composite map $X \longrightarrow Z$ is written $fg$.  In other words, maps act on the right.
\item The category of functors from $\mathcal{D}$ to $\mathcal{C}$ is written exponentially $\mathcal{C}^{\mathcal{D}}$.
\item The $\Hom$ functor in any category is written directly with the category name, in keeping with the modern style:
\be \nn
\mathcal{C}(X, Y) = \Hom_{\mathcal{C}}(X, Y).
\ee
\item Triple $\Hom$ notation $\mathcal{C}(X, Y, Z)$ refers to ``all maps from $X$ to $Z$ that factor through maps to $Y$;'' when $\mathcal{C}$ is additive, we need ``linear combinations of maps from $X$ to $Z$ that factor through $Y$.''  See Definition \ref{triplehom}.
\item Adjacent elements are to be composed whenever they are composable.  For example,
\be \nn
f\mathcal{C}(Y, Z) \subseteq \mathcal{C}(X, Z)
\ee
denotes the maps from $X$ to $Z$ that factor through $f$.  We also have
\be \nn
\mathcal{C}(X, Y) \mathcal{C}(Y, Z) = \mathcal{C}(X, Y,Z )\subseteq \mathcal{C}(X, Z).
\ee 
\item Given a functor $F : \mathcal{C} \longrightarrow \mathcal{D}$ and an object $X \in \mathcal{C}$, we write $FX$ for the image of the object under $F$.  Similarly, for an arrow $f \in \mathcal{C}$, we write $Ff$ for the induced map under $F$.  Occasionally we write $f_*$ (or $f^*$ for contravariant functors).
\end{itemize}
\noindent
Here are some source categories we will need.
\begin{itemize}
\item $\Fin$ denotes the full subcategory of $\catname{Set}$ spanned by the finite sets $[k] = \{ 1, 2, 3, \ldots, k \}$.  An arrow in $\Fin$ is written as a function in one-line notation surrounded by a frame.  This example cements our conventions:
\be \nn
[7] \xrightarrow{\; \fn{3322225} \; } [5] \xrightarrow{\; \fn{78911} \; } [9] \; \; \; \; = \; \; \; \; [7] \xrightarrow{\; \fn{9988881} \; } [9].
\ee
\item $\Fin_{\leq k}$ denotes the full subcategory of $\Fin$ spanned by the sets with cardinality at most $k$.
\item $\Bij = \Iso(\Fin)$ denotes the wide subcategory of $\Fin$ consisting of the isomorphisms.  In other words, $\Bij = \bigsqcup_k \Sym_k$, the disjoint union of the symmetric groups.
\item $\Sym_k = \Aut([k])$ denotes the one-object subcategory of $\Fin$ consisting of the finite set $[k]$ and its automorphisms.  In other words, $\Sym_k$ is the symmetric group consisting of permutations of the numbers $\{ 1, 2, 3, \ldots, k \}$.
\item $\Sym_{\leq k} = \Iso(\Fin_{\leq k})$ denotes the disjoint union of symmetric groups $\Sym_0 \sqcup \Sym_1 \sqcup \cdots \sqcup \Sym_k$.
\end{itemize}
Here are some categories of representations we will need.
\begin{itemize}
\item $\Vect$ denotes the category of finite dimensional vector spaces over $\mathbb{Q}$.
\item $\Vect^{\Fin}$ denotes the category of all finite dimensional\footnote{Here, finite dimensional means each $[k]$ is sent to a finite dimensional space.} representations of $\Fin$.
\item $\fg$ denotes the category of finitely generated $\Fin$-representations, which is to say, the full subcategory of $\FRep$ consisting of the finitely-generated functors.  See Section \ref{goodcategory}.
\item $\BRep$ denotes the category of all finite dimensional representations of $\Bij$.\footnote{This category is also known as the category of linear species.}
\end{itemize}
We make frequent use of restriction functors and left Kan extensions relating these categories.
\begin{itemize}
\item Given a functor $K : \mathcal{C} \longrightarrow \mathcal{D}$ (which is usually an inclusion), we have a natural notion of restriction whereby a $\mathcal{D}$-representation becomes a $\mathcal{C}$-representation.  The restriction functor---written $\Res_{\mathcal{C}}^{\mathcal{D}}$---has a left adjoint, the left Kan extension functor $\Lan_{\mathcal{C}}^{\mathcal{D}}$.  We have a natural isomorphism
\be \nn
\Vect^{\mathcal{C}}\left(W, \; \Res_{\mathcal{C}}^{\mathcal{D}} V \right) \simeq \Vect^{\mathcal{D}}\left(\Lan_{\mathcal{C}}^{\mathcal{D}} W, \;  V \right).
\ee
The unit and counit of this adjunction are written $\eta_{\mathcal{C}}^{\mathcal{D}}$ and $\epsilon_{\mathcal{C}}^{\mathcal{D}}$ respectively.  These functors generalize induction and restriction of group representations; see \cite{CategoricalHomotopyTheory}.
\end{itemize}
The following notations are more onerous and we leave their complete definitions elsewhere.  In what follows, $k \in \mathbb{N}$ and $\lambda$ is a partition of $k$.  This list can be safely skipped in a first reading.
\begin{itemize}
\item $\mathbb{Q}\Fin$ is the category of matrices over the free $\mathbb{Q}$-enrichment of $\Fin$; see Section \ref{qfindef}.  Expressions like
\be \nn
\begin{bmatrixarrow}[0ex]{[1] \oplus [3]}{[1] \oplus [4]}
    \fn{1} & 2 \cdot \fn{4}-\fn{3}-\fn{1} \\
    \fn{111} & \fn{323}+\fn{421}
 \end{bmatrixarrow}
\ee
refer to arrows in $\mathbb{Q}\Fin$.
\item $\langle - \rangle$ Angle bracket notation for imreps is Definition \ref{bracketnotation}.
\item $\tensorpower^k$, $\tensorpower^{<k}$, and $\tensorpower^{\leq k} $ are Definition \ref{tpower}.
\item $c_{\lambda}$ refers to a Young symmetrizer, or any other choice of minimal idempotent of $\mathbb{Q} \Bij_k$ generating an irreducible representation of $\Bij_k$ corresponding to the partition $\lambda$.
\item $Sp_{\lambda}$ refers to the irreducible $\Bij_k$-representation generated by $c_{\lambda} \in \mathbb{Q} \Bij_k$.
\item $\mathbb{P}_{\lambda}$ refers to an ``isotype projective.''  See Definition \ref{isotypeprojective}.
\item $\tau_k$ and $\varepsilon_k$ are Definition \ref{taueps}.  $S^k$, $\Lambda^k$, and $\Theta^k$ are Definition \ref{usefulschurs}.
\item The adjoint pair of functors $\mathbb{S} \dashv \mathbb{R}$ are shorter names for the functors $\Lan_{\Bij}^{\Fin} \dashv \Res_{\Bij}^{\Fin}$.  See Definition \ref{SchurRS}.
\item $\sk_P$, $\sk_{\leq k}$, and $\sk_{<k}$ refer to skeleton functors.  See Definition \ref{skel}.
\item The representations $D_k$ and $C_{\lambda}$ are Definitions \ref{definedk} and \ref{defineclambda} respectively.
\item $d_k$ refers to the map in the Koszul complex $\Lambda^{\bullet}$ induced by $\partial_k$, see Definitions  \ref{defpartial} and  \ref{Koszul}.
\item $ev$ refers to the evaluation map, i.e., the counit of the tensor-hom adjunction.  See Definition \ref{tensorhom}.
\item $H_0$ denotes the zeroth homology functor; $h$ is the map sending a vector to its homology class; see Definition \ref{zerothhomology}.  
\item $\stirling{n}{k}$ denotes a Stirling number of the second kind.  See \cite{EnumCombo1}.
\end{itemize}

\section{The representation theory of $\Fin$}
\subsection{The category $\QFin$} \label{qfindef}
Inspired by the construction of the group algebra $\mathbb{Q}G$ of a group $G$, we develop a ``$\mathbb{Q}$-linearized'' version of the category $\Fin$.  Objects of $\QFin$ are formal direct sums of objects of $\Fin$.  Since $\oplus$ is a biproduct, we need only specify the homomorphisms between two singleton sums: $\QFin(X, Y)$ is all $\mathbb{Q}$-linear combinations of $\Fin$ maps from the finite set $X$ to the finite set $Y$.  Composition is assumed to be bilinear so that $\QFin$ is $\Vect$-enriched.  Concretely, morphisms of $\QFin$ are matrices of linear combinations of $\Fin$ maps, and composition is given by matrix multiplication.

\subsubsection{Representable and im-representable functors}
The representable functors of $\QFin$ are a rich source of functors from $\Fin$ to $\Vect$.  We introduce a convenient notation. 
\begin{defn}[Yoneda bracket notation] \label{bracketnotation}
Given objects $X, Y \in \QFin$, define
\be \nn
\langle X \rangle = \QFin(X,-)
\ee
to be the covariant functor represented by $X$.
Given a map $f: X \longrightarrow Y$, define
\be \nn
\langle f \rangle = \im\left( \QFin(Y,-) \overset{f^*}{\longrightarrow} \QFin(X,-) \right),
\ee
to be the image of the natural transformation $f$ induces contravariantly.
\end{defn}
\noindent
The functors $\langle X \rangle$ and $\langle f \rangle$ have domain $\QFin$, but it also makes sense to evaluate them on objects and morphisms in $\Fin$, which sits as a subcategory.  Note that the two uses of $\langle - \rangle$ are compatible in the sense that
\be \nn
\langle X \rangle \simeq \langle 1_X \rangle.
\ee
Any functor isomorphic to one of the form $\langle f \rangle$ is called an ``im-representable'' functor or an ``imrep'' for short.  The im-representable functors are exactly those functors which can be written as the image of a map between representable functors, by the Yoneda lemma.
\begin{ex} \label{oneline}
Given $k \in \mathbb{N}$, the representable functor $\langle \; [k] \; \rangle$ has an explicit description:
\be \nn
\langle \; [k] \; \rangle  X \simeq \left( \mathbb{Q} X \right)^{\otimes k}.
\ee
In other words, $\langle \; [k] \; \rangle$ sends a finite set $X$ to the $k^{th}$ tensor power of the free vector space on $X$.  Let us make this isomorphism explicit.  Given a pure tensor $x_1 \otimes x_2 \otimes \cdots \otimes x_k \in \tensorpower^k (\mathbb{Q} X)$ considering each $x_i \in X$ as a basis vector of $\mathbb{Q}X$, we associate the function
\bea \nn
[k] & \longrightarrow & X \\ \nn
i & \longmapsto & x_i
\eea
reading the pure tensors as if they were expressing a function in one-line notation.
\end{ex}
\noindent
\begin{defn} \label{tpower}
Define tensor power functors from $\Fin$ to $\Vect$
\bea \nn
\tensorpower^{k\phantom{<}} & = & \langle \; [k] \; \rangle \\ \nn
\tensorpower^{<k} & = & \langle \; [0] \oplus [1] \oplus [2] \oplus \cdots \oplus [k-1] \; \rangle \\ \nn
\tensorpower^{\leq k} & = & \langle \; [0] \oplus [1] \oplus [2] \oplus \cdots \oplus [k] \; \rangle.
\eea
\end{defn}
\noindent
The Yoneda lemma provides an isomorphism:
\be \label{yoneda}
\Vect^{\Fin}\left( \tensorpower^k , \; V \right) \simeq V[k].
\ee
\subsubsection{Lists of generic spanning vectors give imreps and vice-versa}
The bracket notation for imreps is inspired by examples like
\be \nn
V=\langle x_i \otimes x_j \rangle \; \bigg / \; \big \langle x_i \otimes x_i + x_j \otimes x_j + x_k \otimes x_k - 3\left(x_i \otimes x_j - x_i \otimes x_k + x_j \otimes x_k \right) \big \rangle,
\ee
from Section \ref{intro}.  Indeed, $V$ is isomorphic to a quotient of imreps
\be \nn
\begin{amatrixarrow}[0ex]{[2]}{[2]}
   \; \fn{12} \;
 \end{amatrixarrow} \; \Bigg / \;
 \begin{amatrixarrow}[0ex]{[2]}{[3]}
    \; \fn{11} + \fn{22} + \fn{33} -3 \cdot \fn{12} + 3 \cdot \fn{13} -3 \cdot \fn{23}  \;
 \end{amatrixarrow} 
\ee
via the one-line-notation-trick from Example \ref{oneline}.  (We are writing, for example, $\fn{14}$ for the function sending $1 \mapsto 1$ and $2 \mapsto 4$).  As another example, consider the important\footnote{It is $H^1$ of the moduli space of genus 1 curves over $\mathbb{R}$ with $n$ marked points.} uniformly presented vector space (taken from \cite{RealModuli})
\be \label{h1mg}
\langle w_{ijkl} \rangle \, \Big / \, \langle w_{ijlk}+w_{jklm}+w_{klmi}+w_{lmij}+w_{mijk} \rangle
\ee
where the symbol $w_{ijkl}$ is assumed to be antisymmetric in $ijkl$, and $i$, $j$, $k$, and $l$ are assumed to be distinct elements of the set $\{1, 2, 3, \ldots , n \}$.  Once again, we have an isomorphism to a quotient of imreps:
\be \nn
\begin{amatrixarrow}[0ex]{[4]}{[4]}
    \fn{1234}
 \end{amatrixarrow} \, \Bigg / \, \begin{amatrixarrow}[0ex]{[4]}{[4] \oplus [4] \oplus [3] \oplus [5]}
    \fn{1234} + \fn{2134} \;\;\; & \fn{1234}+\fn{2341} \;\;\; & \fn{1123} \;\;\; & \fn{1234}+\fn{2345}+\fn{3451}+\fn{4512}+\fn{5123} 
 \end{amatrixarrow}
\ee
The first two generators in the denominator force the antisymmetry condition\footnote{They force a transposition and a $4$-cycle to act by $-1$, which is enough}, and the third (combined with the first two) imposes distinctness.  It is generally true that convenient presentations like (\ref{h1mg}) can be converted to slightly less wieldy quotients of imreps.  The hope (realized in the proof of Theorem \ref{mainB}) is that imreps might be amenable to systematic or even algorithmic approaches.

\subsubsection{Finitely generated representations}
A finitely generated $R$-module is one which is a quotient of a free module $R^n$ for some $n$; similarly, a finitely generated $\Fin$-representation is a quotient of a representable functor $\langle Y \rangle$ for some $Y$.
\begin{defn}
A functor $V : \Fin \longrightarrow \Vect$
is called \textbf{finitely generated} if there exists an object $Y \in \QFin$ with a surjective map:
\be \nn
\langle Y \rangle \longrightarrow V \longrightarrow 0.
\ee
\end{defn}
\noindent
To produce actual vectors generating $V$, write $Y$ as a sum of finite sets and look at the image of the various universal vectors\footnote{The universal vectors are the identity maps on the finite sets that direct sum to $Y$.} supplied by the Yoneda lemma (\ref{yoneda}).  These special vectors are analogous to the standard basis vectors of $R^n$.\\ \\
\noindent
It is clear from this definition that any imrep is finitely generated; similarly, any imrep is a subobject of some functor of the form $\langle X \rangle$.  More can be said.
\begin{obs} \label{rowcolops}
Given a pair of composable arrows in $\QFin$
\be \nn
X \overset{f}{\longrightarrow} Y \overset{g}{\longrightarrow} Z,
\ee
the imrep $\langle fg \rangle$ is naturally a quotient of $\langle g \rangle$ and a subobject of $\langle f \rangle$.
\end{obs}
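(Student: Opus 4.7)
The plan is to unwind the definitions of the three imreps in sight and observe that everything falls out of the associativity of composition in $\QFin$ together with the functoriality of precomposition. Concretely, viewing $\langle f \rangle$, $\langle fg \rangle$ as subfunctors of $\QFin(X,-)$ and $\langle g \rangle$ as a subfunctor of $\QFin(Y,-)$, write out their values on an arbitrary object $W \in \QFin$:
\begin{align*}
\langle f \rangle (W) &= f \cdot \QFin(Y,W) \subseteq \QFin(X,W), \\
\langle g \rangle (W) &= g \cdot \QFin(Z,W) \subseteq \QFin(Y,W), \\
\langle fg \rangle (W) &= (fg) \cdot \QFin(Z,W) \subseteq \QFin(X,W).
\end{align*}
The inclusion $\langle fg \rangle \hookrightarrow \langle f \rangle$ is then immediate from the factorization $(fg)\cdot h = f \cdot (g \cdot h)$, giving the claimed subobject structure inside $\QFin(X,-)$; naturality in $W$ is inherited from the ambient inclusions into $\QFin(X,-)$.

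For the quotient claim, I would exhibit an explicit surjection $\langle g \rangle \twoheadrightarrow \langle fg \rangle$ by restricting the natural transformation $f^* : \QFin(Y,-) \longrightarrow \QFin(X,-)$ (which is exactly left-multiplication by $f$, up to our right-action conventions) to the subfunctor $\langle g \rangle \subseteq \QFin(Y,-)$. On an object $W$ this restriction sends $g \cdot h \mapsto (fg) \cdot h$, whose image is by definition $\langle fg \rangle(W)$; surjectivity is tautological and naturality is inherited from the ambient natural transformation $f^*$.

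The only care needed is to confirm that these constructions depend functorially on $f$ and $g$, i.e., that we are producing honest natural transformations rather than pointwise maps. This is handled by invoking the Yoneda lemma: both $f^*$ and $(fg)^*$ are natural, and the subfunctor/quotient structure is preserved pointwise by these natural transformations. Since this is essentially a matter of definition-chasing, there is no real obstacle; the observation is a structural remark about images of representables, and its utility will come later when it is used to rewrite arbitrary imreps by ``row and column operations'' on the arrow $f$.
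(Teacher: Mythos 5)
The paper states this as an Observation without proof, and your argument is exactly the intended one: unwind the definition of $\langle - \rangle$ pointwise, use $\langle fg\rangle(W) = f\cdot(g\cdot\QFin(Z,W)) \subseteq f\cdot\QFin(Y,W) = \langle f\rangle(W)$ for the subobject inclusion, and corestrict $f^*$ to $\langle g\rangle$ to land onto $\langle fg\rangle$ for the quotient, with naturality inherited from the ambient $f^*$. Your reasoning is correct and matches the paper's approach; the only slip is cosmetic --- under the paper's ``maps act on the right'' convention $f^*$ is precomposition (written as multiplication by $f$ on the left in the product $fh$), so the hedge ``up to our right-action conventions'' is unnecessary rather than wrong.
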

\noindent
Practically speaking, Observation \ref{rowcolops} allows us to perform row and column operations on imreps.  One available style of proof uses a $\QFin$ equation of the form $f = p q f r s$.   We know $\langle f \rangle = \langle pqfrs \rangle$ is a subquotient of $\langle qfr \rangle$, and $\langle qfr \rangle$ is a subquotient of $\langle f \rangle$, so $\langle f \rangle \simeq \langle qfr\rangle$.
\begin{ex} \label{sumexample}
Let $X$ be an object of $\QFin$, and suppose we have an endomorphism  $\pi: X \longrightarrow X$ satisfying $\pi \pi = \pi$.  The $\QFin$ compositions

\be \nn
 \hspace{.98in}\begin{bmatrixarrow}[0ex]{X \oplus X}{X}
    \pi \\
    1_X - \pi
 \end{bmatrixarrow} \cdot
 \begin{bmatrixarrow}[0ex]{X}{X}
    1_X
 \end{bmatrixarrow} \cdot
 \begin{bmatrixarrow}[0ex]{X}{X \oplus X}
    \pi & 1_X-\pi
 \end{bmatrixarrow}=\begin{bmatrixarrow}[0ex]{X \oplus X}{X \oplus X}
    \pi & 0 \\
    0 & 1_X - \pi
 \end{bmatrixarrow}
\ee
\vspace{.15in}
\be \nn
\begin{bmatrixarrow}[0ex]{X}{X \oplus X}
    1_X & 1_X
 \end{bmatrixarrow} \cdot
\begin{bmatrixarrow}[0ex]{X \oplus X}{X \oplus X}
    \pi & 0 \\
    0 & 1_X - \pi
 \end{bmatrixarrow} \cdot
\begin{bmatrixarrow}[0ex]{X \oplus X}{X}
    1_X \\
    1_X
 \end{bmatrixarrow}=\begin{bmatrixarrow}[0ex]{X}{X}
    1_X
 \end{bmatrixarrow}
\ee
show that $\begin{bmatrixarrow}[0ex]{X}{X}
    1_X
 \end{bmatrixarrow}$ and \scalebox{.8}{$\begin{bmatrixarrow}[0ex]{X \oplus X}{X \oplus X}
    \pi & 0 \\
    0 & 1_X - \pi
 \end{bmatrixarrow}$} are the same matrix up to row and column operations.  \\ \\ \noindent It follows from Observation \ref{rowcolops} that their imreps are mutual subquotients and so
\be \nn
\begin{amatrixarrow}[0ex]{X}{X}
    1_X
 \end{amatrixarrow} \simeq \begin{amatrixarrow}[0ex]{X \oplus X}{X \oplus X}
    \pi & 0 \\
    0 & 1_X - \pi
 \end{amatrixarrow}.
\ee
Further, the previous computation makes the isomorphism explicit.
\end{ex}
\subsubsection{Definition of uniformly presented vector spaces}
We are in a position to give a precise definition of a uniformly presented vector space.
\begin{defn}
A \textbf{uniformly presented vector space} is an $\Fin$-representation $V$ which is given as a quotient of imreps 
\be \nn
V = \langle f \rangle \; \big / \; \langle fg \rangle
\ee
for some pair of composable morphisms $f, g \in \QFin$.
\end{defn}
\subsubsection{Projective objects}
The representable functors $\langle X \rangle$ are projective as objects of the category of representations $\Vect^{\Fin}$: by the Yoneda lemma, the hom functor $\Vect^{\Fin}( \langle X \rangle, \; -)$ is isomorphic to evaluation at $X$, and thus is exact.  Further, given an idempotent $\pi \in \QFin(X, X)$, the functor $\langle \pi \rangle$ is evidently projective as well, since Example \ref{sumexample} gives
\be \nn
\langle \pi \rangle \oplus \langle 1_X - \pi \rangle \cong \langle X \rangle,
\ee
which writes $\langle \pi \rangle$ as a direct summand of a projective object.
\\ \\
\noindent
The representation theory of symmetric groups supplies a wealth of such projections $\pi$.  Fix a finite set $X$, and let $\Bij_X$ denote the group of permutations of $X$.  The group algebra $\mathbb{Q}\Bij_X$ is generated as a left $\mathbb{Q}\Bij_X$-module by minimal idempotents.  Every minimal idempotent $\pi$ gives rise to a functor $\langle \pi \rangle$ since we have a natural inclusion
\be \nn
\pi \in \mathbb{Q}\Bij_X \subseteq \QFin(X,X).
\ee
\begin{defn} \label{isotypeprojective}
Let $\lambda$ be a partition $k$, and $c_{\lambda} \in \mathbb{Q}\Bij_k$ be the Young symmetrizer (or any other minimal idempotent corresponding to $\lambda$).  The \textbf{isotype projective} $\mathbb{P}_{\lambda}$ is defined to be the imrep
\be \nn
\mathbb{P}_{\lambda} = \langle c_{\lambda} \rangle.
\ee
\end{defn}
\noindent
Any two minimal idempotents associated to $\lambda$ are conjugate in the group algebra, so an argument similar to the one given in Example \ref{rowcolops} gives a (non-canonical) isomorphism between any two such functors.  This argument mostly justifies the notational independence of a choice of minimal idempotent.\\ \\
\noindent
It is worth mentioning that the trivial and sign representations, being one dimensional, do not suffer from this ambiguity: their endomorphism algebras contain a unique minimal idempotent.  We develop notation to handle these two common cases.
\begin{defn} \label{taueps}
The  \textbf{$k^{th}$ trivial idempotent} $\tau_k  \in  \QFin([k],[k])$ is
\be \nn
\tau_k  =  \frac{1}{k!} \sum_{\sigma \in \Bij_k} \sigma.
\ee
Similarly, the  \textbf{$k^{th}$ alternating idempotent} $\varepsilon_k  \in  \QFin([k],[k])$ is
\be \nn
\varepsilon_k  =  \frac{1}{k!} \sum_{\sigma \in \Bij_k} \mathop{sign} (\sigma) \cdot \sigma.
\ee
\end{defn}
\begin{defn} \label{usefulschurs}
In keeping with Definition \ref{tpower}, define the functors
\bea \nn
S^k & = & \langle \tau_k \rangle \\ \nn
\Lambda^k & = & \langle \varepsilon_k \rangle \\ \nn
\Theta^k & = & \langle 1_k - \varepsilon_k \rangle
\eea
We note that all three of these functors are naturally summands of $\tensorpower^k$ and that
\be \nn
\tensorpower^k = \Lambda^k \oplus \Theta^k.
\ee
\end{defn}
\subsection{The universal property of isotype projectives}
We provide a universal property of $\mathbb{P}_{\lambda}$ and give an explicit construction in terms of classical representation theory.  The following result provides a refinement of the earlier observation that $\Vect^{\Fin} (\tensorpower^k, V) \simeq V[k]$.
\begin{prop} \label{univiso}
Let $\lambda$ be a partition of $k$.  For any $\Fin$-representation $V$, the isotype projective $\mathbb{P}_{\lambda}$ satisfies
\be \nn
\Vect^{\Fin}(\mathbb{P}_{\lambda},\, V) \; \simeq \; \Vect^{\Bij_k}(\Sp_{\lambda} , \, V[k])
\ee
where $\Sp_{\lambda}$ denotes the irreducible representation of $\Bij_k$ corresponding to $\lambda$.
\end{prop}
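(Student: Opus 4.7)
The plan is to identify both sides of the proposed isomorphism with the subspace $c_{\lambda}\cdot V[k]\subseteq V[k]$, namely the image of the idempotent $V(c_{\lambda})$ arising from the functorial $\Bij_k$-action.

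For the left-hand side, I would apply Example \ref{sumexample} to the idempotent $c_{\lambda} \in \QFin([k],[k])$, which gives a splitting
\be\nn
\tensorpower^k \;\simeq\; \mathbb{P}_{\lambda} \oplus \langle 1_{[k]} - c_{\lambda}\rangle.
\ee
Applying $\Vect^{\Fin}(-,V)$ and invoking the Yoneda identification $\Vect^{\Fin}(\tensorpower^k,V) \simeq V[k]$ produces a decomposition of $V[k]$ whose $\mathbb{P}_{\lambda}$-summand is $\Vect^{\Fin}(\mathbb{P}_{\lambda},V)$. Under Yoneda, this summand is identified with the image of the idempotent on $V[k]$ induced by the endomorphism $c_{\lambda}^{*}$ of $\tensorpower^k$. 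A direct chase through the bijection $\phi \leftrightarrow \phi_{[k]}(1_{[k]})$, combined with naturality of $\phi$ at the morphism $c_{\lambda}:[k]\to[k]$, shows that this induced idempotent is precisely $V(c_{\lambda})$. Hence the left-hand side is naturally $c_{\lambda}\cdot V[k]$.

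For the right-hand side, I would invoke the classical realization of the Specht module as the principal left ideal $\Sp_{\lambda} \simeq \mathbb{Q}\Bij_k \cdot c_{\lambda}$. A $\Bij_k$-equivariant map $\phi : \mathbb{Q}\Bij_k c_{\lambda} \to V[k]$ is uniquely determined by $w := \phi(c_{\lambda})$; the idempotency $c_{\lambda}^2 = c_{\lambda}$ forces $w = V(c_{\lambda})(w)$, and conversely every $w \in c_{\lambda}\cdot V[k]$ extends uniquely to such a map. Thus $\Vect^{\Bij_k}(\Sp_{\lambda},V[k]) \simeq c_{\lambda}\cdot V[k]$ as well, giving the desired isomorphism.

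The argument is essentially a marriage of Yoneda with the standard presentation of irreducible $\Bij_k$-modules as minimal left ideals. The only potentially subtle step, which I would verify with some care, is confirming that the contravariant action $c_{\lambda}^{*}$ on $\tensorpower^k$ really corresponds under Yoneda to the functorial left action $V(c_{\lambda})$ on $V[k]$---and not to a transposed or inverted variant---since this depends on the paper's convention that arrows compose on the right.
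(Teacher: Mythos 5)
Your proof is correct and takes essentially the same route the paper gestures at: the paper simply remarks that Propositions \ref{univiso} and \ref{schurfree} ``can be deduced by taking $\Bij_k$-isotypic components of earlier observations,'' and your argument is exactly that, made explicit via the idempotent $c_{\lambda}$ acting on the $\Bij_k$-equivariant Yoneda isomorphism $\Vect^{\Fin}(\tensorpower^k, V) \simeq V[k]$. On the convention point you wisely flagged: with the paper's convention that $fg$ means ``first $f$, then $g$,'' the functor $V$ endows $V[k]$ with a \emph{right} $\Bij_k$-action (the paper writes things like $H_0K[l]\varepsilon$ accordingly), so both sides of the isomorphism are most naturally identified with $V[k]\,c_{\lambda}$ rather than $c_{\lambda}\,V[k]$; this is purely a matter of notation and, since you use the same convention on both sides, your chain of identifications goes through unchanged.
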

\noindent
In other words, $\dim \Vect^{\Fin}(\mathbb{P}_{\lambda}, V)$ computes the multiplicity of the Specht module $\Sp_{\lambda}$ in the $\Bij_k$-representation $V[k]$.  Further, we see that a map from $S^k$ (resp. $\Lambda^k$) to an $\Fin$-representation $V$ is the same as a vector in the trivial (resp. alternating) isotypic component of $V[k]$.\\ \\
\noindent
The following proposition is analogous to our explicit description of the functor $\tensorpower^k$ as the free functor followed by the usual tensor power, as seen in Example \ref{oneline}.
\begin{prop} \label{schurfree}
Given $\lambda$ a partition of $k \in \mathbb{N}$ and a finite set $X$, $\mathbb{P}_{\lambda}X$ has the following explicit description:
\be \nn
\mathbb{P}_{\lambda}X \simeq \mathbb{S}_{\lambda} \left( \mathbb{Q} X \right),
\ee
where $\mathbb{S}_{\lambda}$ stands for the Schur functor corresponding to $\lambda$ and $\mathbb{Q}X$ stands for the free vector space on the finite set $X$.
\end{prop}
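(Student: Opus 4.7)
The plan is to unwind the bracket notation and identify both sides as the image of the Young symmetrizer acting on a tensor power.

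First I would recall from Example \ref{oneline} the canonical isomorphism $\tensorpower^k X = \langle [k] \rangle X = \QFin([k],X) \simeq (\mathbb{Q}X)^{\otimes k}$, where a morphism $\alpha : [k] \to X$ reads as the pure tensor $x_{\alpha(1)} \otimes \cdots \otimes x_{\alpha(k)}$. Since $\mathbb{P}_\lambda = \langle c_\lambda \rangle$ with $c_\lambda \in \mathbb{Q}\Bij_k \subseteq \QFin([k],[k])$, Definition \ref{bracketnotation} gives
\[
\mathbb{P}_\lambda X \;=\; \im\!\Big(\QFin([k],X) \xrightarrow{\,c_\lambda^*\,} \QFin([k],X)\Big)
\;\subseteq\; (\mathbb{Q}X)^{\otimes k},
\]
where $c_\lambda^*$ is precomposition by $c_\lambda$.

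Next I would check that this precomposition, transported through the one-line identification, is exactly the classical place-permutation action of $\mathbb{Q}\Bij_k$ on $(\mathbb{Q}X)^{\otimes k}$. For a single permutation $\sigma \in \Bij_k$ and $\alpha : [k] \to X$, the composite $\sigma\alpha : [k] \to X$ (remembering the right-composition convention of the paper) sends $i \mapsto \alpha(\sigma(i))$, which under the identification reads $x_{\alpha(\sigma(1))} \otimes \cdots \otimes x_{\alpha(\sigma(k))}$; that is, $\sigma \cdot (v_1 \otimes \cdots \otimes v_k) = v_{\sigma(1)} \otimes \cdots \otimes v_{\sigma(k)}$. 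Extending linearly, the operator $c_\lambda^*$ is precisely multiplication by the Young symmetrizer $c_\lambda$ on $(\mathbb{Q}X)^{\otimes k}$ in the standard sense.

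With the action identified, $\mathbb{P}_\lambda X$ is by definition the image of $c_\lambda$ acting on $(\mathbb{Q}X)^{\otimes k}$, which is the classical construction of the Schur functor $\mathbb{S}_\lambda(\mathbb{Q}X)$ (see any standard reference, e.g.\ the treatment in Fulton--Harris). Naturality in $X$ is automatic: any map of finite sets $X \to Y$ induces the evident map $(\mathbb{Q}X)^{\otimes k} \to (\mathbb{Q}Y)^{\otimes k}$ on tensor powers, and this commutes with place permutations and hence with the $c_\lambda$-action, so the isomorphism is natural.

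The only real obstacle is bookkeeping for composition direction: because this paper composes maps on the right, one must be careful that precomposition with $\sigma$ yields the action $(v_1\otimes\cdots\otimes v_k) \mapsto v_{\sigma(1)}\otimes\cdots\otimes v_{\sigma(k)}$ rather than its inverse, so that the idempotent $c_\lambda$ used inside the bracket notation matches the idempotent used to cut out $\mathbb{S}_\lambda$. Once this convention check is done carefully for a single transposition and a single $k$-cycle (which generate $\Bij_k$), the identification of the two images is immediate.
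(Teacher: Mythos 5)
Your argument is correct and matches the paper's intended proof, which is stated only as a one-line remark (``Propositions \ref{univiso} and \ref{schurfree} can be deduced by taking $\Bij_k$-isotypic components of earlier observations''). You make that remark precise: Example \ref{oneline} identifies $\langle[k]\rangle X = \QFin([k],X)$ with $(\mathbb{Q}X)^{\otimes k}$, precomposition by $c_\lambda$ transports to the place-permutation action of $c_\lambda$ on the tensor power, and the image of a minimal idempotent for $\lambda$ is by definition the Schur functor $\mathbb{S}_\lambda(\mathbb{Q}X)$. Your attention to the right-composition convention and the resulting $\sigma\cdot(v_1\otimes\cdots\otimes v_k)=v_{\sigma(1)}\otimes\cdots\otimes v_{\sigma(k)}$ is exactly the check that keeps the Young symmetrizer in the bracket and the one cutting out $\mathbb{S}_\lambda$ from silently becoming conjugate partitions; this is the one place where a careless reader could go wrong, and the paper itself glosses over it.
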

\noindent
Proposition \ref{schurfree} explains how $\Lambda^k$ and $S^k$---themselves isotype projectives---relate to the classical exterior and symmetric power functors.  Propositions \ref{univiso} and \ref{schurfree} can be deduced by taking $\Bij_k$-isotypic components of earlier observations. \\ \\
\noindent
At the moment, Proposition \ref{univiso} seems a bit disreputable because the isomorphism cannot possibly be canonical: which $\mathbb{P}_{\lambda}$ do we mean?\footnote{Although the definition of $\mathbb{P}_{\lambda}$ depends on a choice of minimal idempotent, the definition of $\Sp_{\lambda}$ depends on the same data, and the problem is resolved.  }  The following definition lends some clarity.
\begin{defn} \label{SchurRS}
The natural inclusion of the symmetric groups $\Bij$ into the category $\Fin$ gives rise to a restriction functor
\be \nn
\mathbb{R} : \Vect^{\Fin} \longrightarrow \Vect^{\Bij}
\ee
with a left adjoint denoted $\mathbb{S}$ satisfying
\be \nn
\Vect^{\Fin}(\mathbb{S}W , \, V)  \; \simeq \;  \Vect^{\Bij}(W ,\, \mathbb{R} V )
\ee
for any representations $W \in \Vect^{\Bij}$ and $V \in \Vect^{\Fin}$.
\end{defn}
\noindent
Considering the Specht module $Sp_{\lambda}$ to be a representation of the category $\Bij$ concentrated on the group $\Bij_k$, we recover a more precise version of Proposition \ref{univiso}.
\begin{defn} \label{schurdef}
A \textbf{Schur projective of degree $k$} is any functor of the form
\be \nn
\mathbb{P} = \mathbb{S}W
\ee
where $W \in \Vect^{\Bij}$ is supported on finite sets of cardinality at most $k$.  If $W$ vanishes on finite sets with cardinality less than $k$, then we say $\mathbb{P}$ is a \textbf{Schur projective of pure degree $k$}.
\end{defn}
\noindent
By writing $W$ as a direct sum of Specht modules, any Schur projective $\mathbb{P}$ is seen to be isomorphic to a direct sum of isotype projectives $\mathbb{P}_{\lambda}$ for various partitions $\lambda$.  The representations $\Theta^k$, $\tensorpower^k$, $\tensorpower^{\leq k}$, and $\tensorpower^{< k}$ are Schur projectives.
\subsection{Finding a good category of representations} \label{goodcategory}
So far, the functor category $\Vect^{\Fin}$ seems perfectly adequate.  However, there is a subtlety we have yet to handle: $\Vect^{\Fin}$ is enriched in $\mathbb{Q}$-vector spaces, but not in \textit{finite dimensional} $\mathbb{Q}$-vector spaces.  Define the functor
\be \nn
\Lambda^{< \infty} = \Lambda^0 \oplus \Lambda^1 \oplus \Lambda^2 \oplus \cdots
\ee
This is a finite-dimensional representation; indeed, $\dim \Lambda^{< \infty}[k] = 2^k$.  However, we will see that its endomorphisms form an infinite dimensional space.  Observing that $\Lambda^{< \infty}$ can be written $\mathbb{S} E$ where $E$ is the direct sum of one copy of every sign representation, we compute
\be \nn
\Vect^{\Fin}(\Lambda^{< \infty} \, , \, \Lambda^{< \infty}) \; \simeq \; \Vect^{\Bij}(E\, , \, \mathbb{R} \Lambda^{< \infty}),
\ee
which is certainly infinite dimensional in light of the sign representation sitting in each $\Lambda^k[k]$.\\ \\
\noindent
For finitely generated $V$, it is immediate that $\Vect^{\Fin}(V, V')$ is finite dimensional, so the enrichment problem is solved by switching to the category of finitely generated representations.
\begin{obs}
The category $\fg$ of finitely generated $\Fin$-modules is enriched in $\Vect$.
\end{obs}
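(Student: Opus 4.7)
The plan is to reduce the hom-space $\Vect^{\Fin}(V,V')$ for $V, V' \in \fg$ to a known finite-dimensional object using the Yoneda lemma, exploiting the definition of finite generation.

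First I would invoke the hypothesis that $V$ is finitely generated to obtain a surjection $\langle Y \rangle \twoheadrightarrow V$ for some object $Y \in \QFin$. Applying the left-exact functor $\Vect^{\Fin}(-,V')$ yields an inclusion
\be \nn
\Vect^{\Fin}(V,V') \hookrightarrow \Vect^{\Fin}(\langle Y \rangle, V').
\ee
Since any object of $\QFin$ is, by construction, a finite formal direct sum of finite sets $[k_1], [k_2], \ldots, [k_m]$, the representable functor splits as $\langle Y \rangle \simeq \langle [k_1] \rangle \oplus \cdots \oplus \langle [k_m] \rangle$, and the hom-space out of it decomposes as a direct sum accordingly.

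Next I would apply the Yoneda isomorphism (\ref{yoneda}) coordinatewise to obtain
\be \nn
\Vect^{\Fin}(\langle Y \rangle, V') \;\simeq\; V'[k_1] \oplus V'[k_2] \oplus \cdots \oplus V'[k_m].
\ee
Because $V' \in \fg \subseteq \FRep$, and the paper takes $\FRep$ to consist of pointwise finite dimensional functors (each $V'[k]$ is a finite dimensional $\mathbb{Q}$-vector space), this direct sum is a finite sum of finite dimensional vector spaces, hence finite dimensional. The subspace $\Vect^{\Fin}(V,V')$ inherits finite dimensionality, completing the proof.

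There is essentially no obstacle here: the argument is a direct combination of the universal property of free modules (via Yoneda) and the pointwise finiteness built into the ambient category $\FRep$. The only mild subtlety is remembering that a generator $Y \in \QFin$ is a \emph{finite} formal direct sum of finite sets, which is baked into the definition of $\QFin$ from Section \ref{qfindef} and is what allows the hom-space to be bounded by a finite sum of evaluations rather than an infinite one.
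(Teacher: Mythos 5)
Your proof is correct and is exactly the argument the paper has in mind when it asserts, just above the observation, that ``for finitely generated $V$, it is immediate that $\Vect^{\Fin}(V, V')$ is finite dimensional.'' You have simply unpacked that one-line remark: surject $\langle Y \rangle \twoheadrightarrow V$, apply left exactness of $\Vect^{\Fin}(-,V')$, split $Y$ into its finitely many summands, and invoke Yoneda (\ref{yoneda}) together with the pointwise finite dimensionality built into $\FRep$.
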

\noindent
The danger, of course, is that the resulting category may lose important homological properties.  For example, we have no guarantee that $\fg$ is closed under taking kernels!  Luckily, we have the following result, which is a consequence of Corollary \ref{polydimgivesfg}.
\begin{thm} \label{abcat}
The category $\fg$ of finitely generated $\Fin$-modules is abelian.  
\end{thm}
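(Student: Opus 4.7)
The plan is to exhibit $\fg$ as an abelian subcategory of the (obviously) abelian category $\Vect^{\Fin}$. Since the ambient functor category has all kernels, cokernels, and finite biproducts computed pointwise in $\Vect$, it suffices to check that $\fg \subseteq \Vect^{\Fin}$ is closed under each of these three constructions.

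Finite biproducts are immediate: if $V$ is a quotient of $\langle Y \rangle$ and $V'$ is a quotient of $\langle Y' \rangle$, then $V \oplus V'$ is a quotient of $\langle Y \oplus Y' \rangle$. Cokernels are equally direct: any cokernel of a map $V \to W$ with $W$ finitely generated is itself a quotient of $W$, hence of whatever representable surjects onto $W$. So the only real content is closure under kernels, and this is where I expect the difficulty to live.

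For kernels, the approach is to reduce to a dimension estimate and invoke Corollary \ref{polydimgivesfg}, which I am allowed to cite. Given $V \in \fg$, choose a surjection $\langle Y \rangle \twoheadrightarrow V$ with $Y = [n_1] \oplus \cdots \oplus [n_r] \in \QFin$. By Example \ref{oneline},
\be \nn
\dim \langle Y \rangle [n] \; = \; n^{n_1} + n^{n_2} + \cdots + n^{n_r},
\ee
which is a polynomial in $n$. Hence $\dim V[n]$ is bounded above by a polynomial in $n$. If $\varphi : V \to W$ is any morphism in $\fg$, then the pointwise kernel $K = \ker \varphi$ satisfies $\dim K[n] \leq \dim V[n]$ for every $n$, so $K$ also has polynomial dimension growth. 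By Corollary \ref{polydimgivesfg}, $K$ is finitely generated, and it is immediate that the inclusion $K \hookrightarrow V$ computed in $\Vect^{\Fin}$ realizes the kernel in the subcategory $\fg$ as well.

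The main obstacle is really the content packed into Corollary \ref{polydimgivesfg}: the statement that polynomial dimension growth forces finite generation is a nontrivial Noetherian-type theorem about $\Fin$-representations, and everything here rests on it. Once that corollary is in hand, the proof of Theorem \ref{abcat} is essentially the three-line dimension bound above, since finite generation is otherwise manifestly preserved by quotients and sums but not, a priori, by subobjects.
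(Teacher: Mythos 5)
Your proof is correct and takes the same route the paper intends: the paper states Theorem \ref{abcat} only as ``a consequence of Corollary \ref{polydimgivesfg}'' without filling in the details, and the details you supply---reduce to closure under kernels, observe that a subobject of a finitely generated representation has polynomially bounded dimension, and invoke the corollary---are exactly the natural way to realize that implication.
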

\noindent
Finitely generated representations have a notion of degree extending Definition \ref{schurdef}.
\begin{defn}
A finitely generated $\Fin$-representation $V \in \fg$, ``\textbf{has degree} $k$'' if it can be written as a quotient of a Schur projective of degree $k$.  Equivalently, $V$ has degree $k$ if it has a generating set sitting in the vector spaces $V[0], V[1], \ldots, V[k]$.
\end{defn}
\noindent
In order to relate the big representation category $\Vect^{\Fin}$ with the more technically appealing $\fg$, we will prove the following elementary theorem.
\begin{thm} \label{skcolim}
Any representation $V \in \Vect^{\Fin}$ is canonically a colimit of finitely generated subrepresentations
\be \nn
V_0 \subseteq V_1 \subseteq V_2 \subseteq \cdots \subseteq V_k \subseteq \cdots \subseteq V
\ee
where each $V_k$ has degree $k$. See (\ref{skelproof}).
\end{thm}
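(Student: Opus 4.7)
The plan is to construct the filtration by a canonical skeleton operation. For each $k \geq 0$, I would define $V_k \subseteq V$ to be the subrepresentation generated by the vectors sitting in degrees $0, 1, \ldots, k$; equivalently, $V_k$ can be described as the image of the counit $\epsilon_{\Fin_{\leq k}}^{\Fin}$ of the adjunction $\Lan_{\Fin_{\leq k}}^{\Fin} \dashv \Res_{\Fin_{\leq k}}^{\Fin}$ applied at $V$. This description is manifestly canonical and functorial in $V$, and the nested inclusions $V_k \subseteq V_{k+1} \subseteq V$ are automatic from the fact that each successive skeleton contains the previous one.

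Next I would argue that each $V_k$ has degree $k$ in the sense defined just above the theorem. Because $V \in \Vect^{\Fin}$ means each $V[m]$ is a finite dimensional $\mathbb{Q}$-vector space, choosing a basis of $V[m]$ for each $m \leq k$ and applying the Yoneda identification (\ref{yoneda}) yields a finite collection of maps from representable functors into $V$. Bundling these across $m = 0, \ldots, k$ should produce a surjection
\[
\bigoplus_{m=0}^{k} \bigl(\tensorpower^m\bigr)^{\oplus \dim V[m]} \twoheadrightarrow V_k,
\]
and the source is a Schur projective of degree $k$. Hence $V_k \in \fg$ has degree $k$.

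Finally, I would check that $V = \bigcup_k V_k$. For any $n$ and any $v \in V[n]$, we have $v = (V 1_{[n]})(v) \in V_n$, so every vector of $V$ lies in some $V_k$. Since colimits in $\Vect^{\Fin}$ are computed pointwise and a filtered union of subspaces is the filtered colimit, it follows that the canonical map $\varinjlim_k V_k \to V$ is an isomorphism.

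I do not expect any serious obstacle. The one subtlety worth flagging is the second step: the conclusion that $V_k$ is \emph{finitely} generated (rather than merely generated in degrees $\leq k$) depends crucially on the pointwise finite-dimensionality of $V$, which is built into the definition of $\Vect^{\Fin}$ that this paper adopts. Without that hypothesis, the skeleton $V_k$ would still be well-defined but would fail to land in $\fg$.
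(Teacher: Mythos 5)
Your proof is correct and is exactly the paper's argument, merely written out: the paper's proof is the one-line observation that the skeletal filtration $\sk_{\leq 0} V \subseteq \sk_{\leq 1} V \subseteq \cdots$ does the job, and by Proposition \ref{skcounit} your counit description of $V_k$ (taking $\mathcal{G} = \Fin_{\leq k}$) identifies $V_k$ with $\sk_{\leq k} V$, while your surjection from $\bigoplus_{m=0}^{k}(\tensorpower^m)^{\oplus \dim V[m]}$ is the evaluation map from Definition \ref{skel} after unwinding $\Vect^{\Fin}(\tensorpower^{\leq k}, V) \simeq \bigoplus_{m \leq k} V[m]$. Your closing caveat about pointwise finite-dimensionality being essential for $V_k \in \fg$ is well taken and is precisely why $\Vect^{\Fin}$ is defined with that hypothesis.
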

\noindent
Although this theorem does grant access to the representation theory of $\Fin$ at large, one finds that practical applications lie firmly in $\fg$.
\subsection{Non-semisimplicity of $\Vect^{\Fin}$ and non-polynomiality of dimension}
It is time to expose the reader to some important non-projective representations.  An extremely familiar example turns out to produce a non-split short exact sequence in $\fg$.
\begin{ex}  The free vector space functor $\tensorpower^1$ is a non-trivial extension
\be \nn
0 \longrightarrow D_2 \longrightarrow \tensorpower^1 \longrightarrow D_1 \longrightarrow 0
\ee
Where $D_2$ sends a finite set $X$ to the vectors of $\tensorpower^1X$ with coefficient-sum zero, and $D_1$ is one-dimensional everywhere except for $[0]$, where it vanishes.
\end{ex}
\noindent
\begin{rmk}
Considering finite sets to be discrete topological spaces, the injection $D_2 \hookrightarrow \tensorpower^1$ coincides with the natural inclusion $\tilde H_0^{sing} \longrightarrow H_0^{sing}$ of reduced singular homology with coefficients in $\mathbb{Q}$ into non-reduced singular homology.  
\end{rmk}
\noindent
It turns out that the representations $D_1$ and $D_2$ are part of an infinite family of irreducible representations, which we now construct.
\begin{defn}\label{defpartial}
The \textbf{$k^{th}$ partial} $\partial_k \in \QFin([k],[k+1])$ is defined
\be \nn
\partial_k = \varepsilon_{k} \iota_{k} \varepsilon_{k+1},
\ee
where $\iota_k$ denotes the natural inclusion of $[k]$ into $[k+1]$.
\end{defn}
\noindent
For example, $\partial_2 = \frac{1}{6}(\fn{23} - \fn{32} - \fn{13}+\fn{31} + \fn{12}-\fn{21})$.
\begin{defn} \label{Koszul}
The \textbf{Koszul complex} $\Lambda^{\bullet}$ is the chain complex
\be \nn
\cdots \overset{d_2}{\longrightarrow} \Lambda^2 \overset{d_1}{\longrightarrow} \Lambda^1 \overset{d_0}{\longrightarrow} \Lambda^0 \longrightarrow 0,
\ee
where each map $d_k$ is induced by the corresponding $\partial_k$.
\end{defn}
\begin{obs} Restricting a representation $V \in \Vect^{\Fin}$ to the category of cosimplicial vector spaces $\Vect^{\Delta}$, the chain complex computing reduced cosimplicial homology coincides with $\Vect^{\Fin}(\Lambda^{\bullet}, V)^*$.  In particular, the Koszul complex is exact except at the last term, because evaluating at $[n]$ gives a chain complex computing the reduced homology of an $(n-1)$-simplex.  
\end{obs}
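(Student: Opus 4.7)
My plan has two parts mirroring the structure of the observation. First I would verify the identification of $\Vect^{\Fin}(\Lambda^\bullet,V)^*$ with the chain complex of reduced cosimplicial homology of $V|_\Delta$; then, relying on a concrete evaluation, I would deduce the exactness claim directly from the contractibility of a simplex.

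For the cosimplicial identification, I would invoke the universal property of isotype projectives (Proposition \ref{univiso}) with $\lambda=(1^k)$: it supplies a natural isomorphism $\Vect^{\Fin}(\Lambda^k, V)\simeq (V[k])^-$, the sign-isotypic component of $V[k]$ under $\Bij_k$. In characteristic zero, this sign-isotypic summand of an $\Fin$-representation is canonically the normalized (Moore) cochain at degree $k$ of $V|_\Delta$; pointwise dualization converts the cochain complex into the chain complex of reduced cosimplicial homology. To match differentials I would unpack $\partial_k=\varepsilon_k\iota_k\varepsilon_{k+1}$: on a sign-isotypic vector the right $\varepsilon_{k+1}$ acts by the identity, the middle $\iota_k$ applies a single coface, and the left $\varepsilon_k$ antisymmetrizes; antisymmetrization of a single coface on the sign-isotypic subspace spreads it into the alternating sum of all $k{+}1$ cofaces, recovering the usual cosimplicial coboundary up to a uniform combinatorial factor.

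For the exactness of $\Lambda^\bullet$ away from its last term, it suffices (since kernels and images in $\Vect^{\Fin}$ are computed pointwise) to show that $\Lambda^\bullet[n]$ is acyclic for each $n\geq 1$ and has a single nonzero class at $\Lambda^0$ when $n=0$. By Proposition \ref{schurfree}, $\Lambda^k[n]\simeq\wedge^{k}\mathbb{Q}^n$, with basis indexed by the $k$-subsets of $[n]$, i.e.\ the $(k-1)$-faces of the standard $(n-1)$-simplex on vertex set $[n]$. A direct computation with $\partial_k$ on a basic wedge $e_{i_1}\wedge\cdots\wedge e_{i_{k+1}}$, using $\varepsilon_{k+1}^2=\varepsilon_{k+1}$ to simplify, shows that the induced differential equals a nonzero scalar multiple of $\sum_{j=1}^{k+1}(-1)^{j-1}\,e_{i_1}\wedge\cdots\widehat{e_{i_j}}\cdots\wedge e_{i_{k+1}}$, the standard simplicial boundary. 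Thus $\Lambda^\bullet[n]$ is the augmented reduced simplicial chain complex of the $(n-1)$-simplex, with $\Lambda^0[n]\simeq\mathbb{Q}$ playing the role of augmentation. For $n\geq 1$ the simplex is contractible, so the complex is acyclic; for $n=0$ only $\Lambda^0[0]\simeq\mathbb{Q}$ survives, confirming that the sole homology sits at the final term.

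The main obstacle is simply careful bookkeeping: tracking the factorials produced by the $\varepsilon_j$ antisymmetrizers and the sign conventions relating the wedge-product expansion to the simplicial boundary. These constants do not affect exactness and are pinned down on a single basic tensor, after which $\Fin$-equivariance propagates the identification throughout the complex.
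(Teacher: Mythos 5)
Your second part — evaluating $\Lambda^\bullet$ at $[n]$, identifying $\Lambda^k[n]\simeq\wedge^k\mathbb{Q}^n$ with basis the $(k-1)$-faces of the $(n-1)$-simplex, computing that the induced differential is a nonzero multiple of the simplicial boundary, and then invoking contractibility — is exactly the argument the paper's parenthetical ``because evaluating at $[n]$ gives a chain complex computing the reduced homology of an $(n-1)$-simplex'' intends, and it correctly establishes the exactness claim (the ``in particular'' half, which is what is actually used later in the paper). The $n=0$ base case is also handled properly: $\Lambda^\bullet[0]$ is just $\mathbb{Q}$ in degree zero, yielding $D_0$ as the sole homology.

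The first part, however, contains a genuine error. You assert that the sign-isotypic summand $V[k]^-$ is ``canonically the normalized (Moore) cochain at degree $k$.'' This is false in general. Your own computation shows that sign-isotypic vectors are killed by every codegeneracy (because each codegeneracy $\sigma^i$ is fixed by a transposition, so $V\sigma^i(v)=V\sigma^i(-v)$), hence $V[k]^-\subseteq N^{k-1}(V|_\Delta)$; but this inclusion is typically strict. For instance, take $V=\tensorpower^2$ and $k=2$: the space $V[2]=\mathbb{Q}\Fin([2],[2])$ is $4$-dimensional, the sign-isotypic part under the $\Bij_2$-action is the $2$-dimensional span of $\fn{11}-\fn{22}$ and $\fn{12}-\fn{21}$, whereas the kernel of the single codegeneracy $V[2]\longrightarrow V[1]$ is $3$-dimensional. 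So $\Vect^{\Fin}(\Lambda^\bullet,V)$ is not literally the Moore complex; the statement ``coincides with'' should be read homologically (a quasi-isomorphic subcomplex) rather than as an equality of graded pieces, and proving the quasi-isomorphism requires more than the containment you verified. Fortunately, this is tangential: the exactness of $\Lambda^\bullet$ does not depend on the cosimplicial identification at all, and your self-contained part two supplies a complete argument for the claim that matters.
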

\begin{defn} \label{definedk}
Define
\be \nn
D_k = \coker \left( \Lambda^{k+1} \overset{d_k}{\longrightarrow} \Lambda^k \right),
\ee
the successive cokernels of the maps in the Koszul complex.  Note that $D_0$ is the vector space $\mathbb{Q}$ concentrated on the empty set, and the other $D_k$ are imreps $D_{k} \simeq \langle \partial_{k-1} \rangle$.
\end{defn}
\noindent
Optimistic readers may be hoping that the dimension of a finitely generated representation $V \in \fg$ is given by a polynomial.  Unfortunately, $\dim D_1[n]$ for $n=0,1,2, \ldots$ gives the sequence
\be \nn
0, 0, 1, 2, 3, 4, \ldots
\ee
which is not polynomial.  This example raises the specter of ``eventual polynomiality'' and ``stable ranges'' omnipresent in the representation theory of $\mathcal{FI}$, for example.  However, these fears prove almost entirely unrealized.  In fact, the following computation exemplifies the (extremely limited) extent to which non-polynomiality is available to finitely generated representations; see Corollary \ref{polydim}.
\begin{obs}
An appropriate truncation of the Koszul complex gives a (rightward) resolution of $D_k$, yielding the dimension formula
\be \nn
\dim D_k[n] = \binom{n}{k-1} - \binom{n}{k-2} + \binom{n}{k-3} - \cdots \pm \binom{n}{1} \mp \binom{n}{0} \pm 0^n
\ee
where we take $0^0=1$.  In particular, the dimension of $D_k$ is polynomial away from $[0]$.\end{obs}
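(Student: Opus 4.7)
The plan is to build a finite resolution of $D_k$ out of exterior powers, take the Euler characteristic to get an alternating sum of binomial coefficients, and then use a standard binomial identity to rewrite the answer in the form stated.

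First, I would use the preceding observation that the Koszul complex is exact except at $\Lambda^0$. Since $D_k$ is defined as $\coker(d_k)$, this exactness immediately gives an infinite exact sequence
\be \nn
\cdots \overset{d_{k+2}}{\longrightarrow} \Lambda^{k+2} \overset{d_{k+1}}{\longrightarrow} \Lambda^{k+1} \overset{d_k}{\longrightarrow} \Lambda^k \longrightarrow D_k \longrightarrow 0,
\ee
which is the desired (rightward) resolution by Schur projectives. This resolution is infinite as a resolution in $\fg$, but evaluating at any finite set $[n]$ makes it finite: by Proposition \ref{schurfree}, $\Lambda^j[n] \simeq \bigwedge^j \mathbb{Q}^n$, so $\dim \Lambda^j[n] = \binom{n}{j}$ and in particular $\Lambda^j[n] = 0$ for $j > n$.

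Next, taking the alternating sum of dimensions in the truncated exact sequence
\be \nn
0 \longrightarrow \Lambda^n[n] \longrightarrow \Lambda^{n-1}[n] \longrightarrow \cdots \longrightarrow \Lambda^{k+1}[n] \longrightarrow \Lambda^k[n] \longrightarrow D_k[n] \longrightarrow 0
\ee
yields, for $n \geq k$,
\be \nn
\dim D_k[n] = \sum_{j=k}^{n} (-1)^{j-k} \binom{n}{j}.
\ee
To convert this into the form stated in the observation, I would apply the elementary identity $\sum_{j=0}^{n} (-1)^{j} \binom{n}{j} = 0^n$, which equals $0$ for $n \geq 1$ and $1$ for $n = 0$. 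Splitting the full sum at $j = k$ and solving for the tail gives
\be \nn
\sum_{j=k}^{n} (-1)^{j-k} \binom{n}{j} \; = \; (-1)^{k+1} \sum_{j=0}^{k-1} (-1)^j \binom{n}{j} + (-1)^k \cdot 0^n,
\ee
and reversing the order of summation produces the alternating expression $\binom{n}{k-1} - \binom{n}{k-2} + \cdots \pm \binom{n}{1} \mp \binom{n}{0}$ claimed in the statement, with the residual $0^n$ term accounting for the $n = 0$ boundary.

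I do not anticipate any real obstacle: everything reduces to standard facts already in place (the Koszul exactness observation, Proposition \ref{schurfree} giving $\dim \Lambda^j = \binom{n}{j}$, and additivity of dimension on an exact sequence). The only step requiring slight care is the bookkeeping at $n = 0$, where $D_k[0]$ must vanish for $k \geq 1$ and equal $\mathbb{Q}$ for $k = 0$; checking both cases against the formula confirms that the $\pm 0^n$ correction has the correct sign $(-1)^k$. The remaining assertion, that $\dim D_k[n]$ agrees with a polynomial for $n \geq 1$, is then immediate since the $0^n$ contribution vanishes away from the empty set.
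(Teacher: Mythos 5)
Your argument is correct, but the complex you wrote down is a leftward (projective) resolution of $D_k$, not the rightward one the observation is gesturing at, and choosing that direction forces you to invoke an extra identity at the end. A rightward resolution places $D_k$ at the left: by exactness of the Koszul complex at $\Lambda^{k-1}$ one has $D_k \simeq \im(d_{k-1}) = \ker(d_{k-2})$, so for $k \geq 1$ the sequence
\be \nn
0 \longrightarrow D_k \longrightarrow \Lambda^{k-1} \longrightarrow \Lambda^{k-2} \longrightarrow \cdots \longrightarrow \Lambda^{1} \longrightarrow \Lambda^0 \longrightarrow D_0 \longrightarrow 0
\ee
is exact and already finite as a complex of functors; its Euler characteristic at $[n]$ produces the stated formula in one step, with the terminal $\pm 0^n$ being nothing other than $\mp\dim D_0[n]$. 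Your leftward tail $\cdots \to \Lambda^{k+1} \to \Lambda^k \to D_k \to 0$ is infinite at the functor level and becomes finite only after evaluation at $[n]$ (using $\Lambda^j[n]=0$ for $j>n$), so you first obtain $\dim D_k[n]=\sum_{j\geq k}(-1)^{j-k}\binom{n}{j}$ and then need the binomial identity $\sum_{j}(-1)^j\binom{n}{j}=0^n$ to transpose this tail sum into the head sum appearing in the statement. Both routes rest on the same exactness fact and both are valid; the rightward truncation is the one the paper flags because the formula, including the $0^n$ correction, falls out directly without further manipulation or case analysis.
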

\subsection{The representations $C_{\lambda}$}
Sitting inside the representation $\tensorpower^k$ is a subrepresentation called $\sk_{<k} \tensorpower^k$ consisting of the span of all non-injective functions.  Define $C^k$ so as to get a short exact sequence
\be \nn
0 \longrightarrow \sk_{<k} \tensorpower^k \longrightarrow \tensorpower^k \longrightarrow C^k \longrightarrow 0.
\ee
The functor $\tensorpower^n$ has an action of the symmetric group $\Bij_n$ given by permutation of tensors (\`a la Schur-Weyl duality), and $\sk_{<k} \tensorpower^k$ is evidently invariant under this action.  It follows that we get an action of $\Bij_k$ on $C^k$ as well.
\begin{defn} \label{defineclambda}
Given $\lambda$ a partition of $k$, the representation $C_{\lambda}$ is defined
\be \nn
C_{\lambda} = \im\left(C^k \overset{c_{\lambda}}{\longrightarrow} C^k\right)
\ee
where $c_{\lambda}$ is a minimal idempotent of $\mathbb{Q} \Bij_k$ corresponding to the partition $\lambda$, and the action of $\Bij_k$ on $C^k$ is given by permutation of tensors (i.e. precomposition).
\end{defn}
\noindent
Concretely, the representation $C^k$ has a description as a uniformly presented vector space whereby we place generators for all non-injections in the denominator.  For example,
\be \nn
C^4 = \langle \; \fn{1234} \; \rangle \; \big / \; \langle \;  \fn{1123} \; \;\; \; \fn{1213} \; \; \; \;\fn{1231} \; \;\; \; \fn{1223} \;  \; \; \;\fn{1232} \; \;\; \; \fn{1233} \; \rangle.
\ee
A similar trick gives a uniform presentation for any $C_{\lambda}$.  For example, using the Young symmetrizer 
\be \nn
c_{\ydiagram{2,2}} = \frac{1}{12}\left(\scalebox{.7}{$\fn{1234} + \fn{1243}- \fn{1423} - \fn{1432} + \fn{2134} + \fn{2143} - \fn{2314} - \fn{2341} - \fn{3214} - \fn{3241} + \fn{3412} + \fn{3421} - \fn{4123} - \fn{4132} + \fn{4312} + \fn{4321}$} \right),
\ee
we get a corresponding presentation for $C_{\ydiagram{2,2}}$:
\bea \nn
C_{\ydiagram{2,2}} &\simeq & \langle c_{\ydiagram{2,2}} \rangle \; \big / \; \left( \langle c_{\ydiagram{2,2}} \rangle \cap \langle \;  \fn{1123} \; \;\; \; \fn{1213} \; \; \; \;\fn{1231} \; \;\; \; \fn{1223} \;  \; \; \;\fn{1232} \; \;\; \; \fn{1233} \; \rangle \right) \\ \nn
& \simeq & \langle c_{\ydiagram{2,2}} \; \; \; \;  \scalebox{.7}{$\fn{1123} \; \; \; \; \fn{1213} \; \; \; \;\fn{1231} \; \; \; \; \fn{1223} \; \; \; \; \fn{1232} \; \; \; \; \fn{1233}$} \;  \rangle \; \big / \; \langle \;   \scalebox{.7}{$\fn{1123} \; \; \; \; \fn{1213} \; \; \; \;\fn{1231} \; \; \; \; \fn{1223} \; \; \; \; \fn{1232} \; \; \; \; \fn{1233}$} \; \rangle,
\eea
by the second isomorphism theorem.
\begin{obs}
A basis for $C^k[n]$ is given by injections $[k] \hookrightarrow [n]$, where the action of $\Bij_k$ is precomposition.  
\end{obs}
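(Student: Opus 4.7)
The plan is to unpack the one-line-notation identification from Example \ref{oneline} and observe that the quotient picks out the injections cleanly. Under that identification, $\tensorpower^k[n] \simeq (\mathbb{Q}[n])^{\otimes k}$ has as basis the pure tensors $x_{f(1)} \otimes \cdots \otimes x_{f(k)}$ indexed by arbitrary functions $f : [k] \to [n]$. The first step is therefore to note that this basis splits as a disjoint union of injections and non-injections, so the two subspaces they span are complementary inside $\tensorpower^k[n]$.

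Next I would identify $\sk_{<k} \tensorpower^k[n]$ with the span of the non-injection basis vectors. By the definition of $\sk_{<k}$ (a function $[k] \to [n]$ lies in its image when it factors through some $[m]$ with $m<k$), the non-injective $f : [k] \to [n]$ are exactly those that factor through a proper quotient $[k] \twoheadrightarrow [m] \hookrightarrow [n]$ with $m < k$. Conversely any such factorization yields a non-injection, and functoriality of $\tensorpower^k$ lets one pull these basis vectors through the factorization to see they live in $\sk_{<k} \tensorpower^k[n]$. Combining this with the preceding step, the short exact sequence
\be \nn
0 \longrightarrow \sk_{<k} \tensorpower^k[n] \longrightarrow \tensorpower^k[n] \longrightarrow C^k[n] \longrightarrow 0
\ee
identifies $C^k[n]$ with the quotient by the span of non-injections, so the images of the injective basis vectors give the claimed basis of $C^k[n]$.

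Finally I would verify the claim about the $\Bij_k$-action. On $\tensorpower^k[n]$, precomposing $f:[k]\to[n]$ with $\sigma \in \Bij_k$ produces $\sigma f$, and under the one-line-notation identification this is exactly permutation of tensor factors. Because precomposition with a bijection preserves injectivity, the subspace $\sk_{<k} \tensorpower^k[n]$ is $\Bij_k$-stable, and the induced action on $C^k[n]$ permutes the basis of injections by precomposition, as claimed.

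The argument is essentially bookkeeping; the only step requiring genuine care is the identification of $\sk_{<k} \tensorpower^k[n]$ with the non-injection span, since one must check both inclusions against the definition of the skeleton functor. Nothing in this observation is deep once one is fluent with the one-line-notation dictionary.
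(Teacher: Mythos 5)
Your proof is correct, and it takes the same route the paper implicitly intends: the paper defines $C^k$ immediately after asserting in passing that $\sk_{<k}\tensorpower^k$ is ``the span of all non-injective functions,'' and this observation is then stated without further argument. What you have done is fill in that assertion, namely checking both inclusions between $\sk_{<k}\tensorpower^k[n]$ and the span of non-injections against Definition \ref{skel}, which is exactly the point that deserves verification. The one-line-notation bookkeeping, the complementarity of the injective and non-injective basis vectors, and the check that precomposition by $\Bij_k$ preserves injectivity (and hence descends to the quotient) are all sound and match the paper's conventions.
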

\begin{prop} \label{clambdadim}
\be \nn
\dim C_{\lambda} [n] = \left( \dim \Sp_{\lambda} \right) \cdot \binom{n}{k}.
\ee
\end{prop}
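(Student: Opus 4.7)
The plan is to combine the basis description of $C^k[n]$ given in the preceding observation with the standard fact that a minimal idempotent $c_\lambda \in \mathbb{Q}\Bij_k$ cuts out a subspace of dimension $\dim \Sp_\lambda$ from the regular representation.

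First I would analyze $C^k[n]$ as a $\Bij_k$-representation. By the observation preceding the proposition, $C^k[n]$ has a basis indexed by injections $f \colon [k] \hookrightarrow [n]$, and $\Bij_k$ acts by precomposition. I would then identify the orbits of this action: two injections lie in the same orbit iff they have the same image, so the orbit set is naturally the set of $k$-element subsets of $[n]$, and each orbit has size $k!$ since precomposition acts freely. Choosing once and for all, for each $k$-subset $S \subseteq [n]$, a reference injection $f_S$ with image $S$ identifies the corresponding orbit with $\Bij_k$ via $\sigma \mapsto f_S\sigma$ (in the paper's right-composition convention). This yields a $\Bij_k$-equivariant isomorphism
\be \nn
C^k[n] \;\simeq\; \bigoplus_{S \in \binom{[n]}{k}} \mathbb{Q}\Bij_k,
\ee
where each summand carries the regular representation.

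Next I would compute the image of $c_\lambda$. Since the action on each summand $\mathbb{Q}\Bij_k$ agrees with the regular action, and $c_\lambda$ is a minimal idempotent corresponding to $\lambda$, the image of $c_\lambda$ acting on a single copy of the regular representation is a copy of the irreducible representation $\Sp_\lambda$ (this is the standard realization of the Specht module as $\mathbb{Q}\Bij_k \cdot c_\lambda$, or its right analogue $c_\lambda \cdot \mathbb{Q}\Bij_k$, whichever matches the side of the action—both have dimension $\dim \Sp_\lambda$). Since $c_\lambda$ acts summand-wise and there are $\binom{n}{k}$ summands, taking images yields
\be \nn
\dim C_\lambda[n] \;=\; \binom{n}{k} \cdot \dim \Sp_\lambda,
\ee
as claimed.

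The only subtlety is bookkeeping about left versus right actions: the paper's convention that maps act on the right, combined with "precomposition," means that the $\Bij_k$-action on $C^k[n]$ is formally a right action, so the image of $c_\lambda$ is $c_\lambda \cdot \mathbb{Q}\Bij_k$ on each summand rather than $\mathbb{Q}\Bij_k \cdot c_\lambda$. This is not a real obstacle—both realizations of $\Sp_\lambda$ have the same dimension—but it is the one point where care is required, and I would state the convention explicitly before invoking the dimension count.
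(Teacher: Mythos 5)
Your proof is correct, but it takes a genuinely different and more elementary route than the paper's. The paper's proof is indirect: it invokes the earlier result that a finitely generated $\Fin$-representation with no $D_0$ composition factor has dimension given exactly by a polynomial in $n$ of bounded degree (Corollary \ref{polydim}), observes that $C_{\lambda}[n]$ vanishes for $n=0,1,\ldots,k-1$ and equals $\dim \Sp_{\lambda}$ at $n=k$, and then identifies the polynomial by interpolation since a degree-$k$ polynomial is determined by these $k+1$ values. That argument is short but stands on the full machinery of the paper (polynomiality of dimensions, the classification of simples). Your proof instead works directly from the basis description of $C^k[n]$: you decompose $C^k[n]$ as a $\Bij_k$-representation into $\binom{n}{k}$ copies of the regular representation $\mathbb{Q}\Bij_k$, one for each $k$-element image set (the action by precomposition being free, each orbit having size $k!$), and then apply the standard fact that a minimal idempotent $c_{\lambda}$ cuts out a $\dim \Sp_{\lambda}$-dimensional image from each copy. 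This is self-contained, uses none of the structural theorems, and actually yields strictly more information---the explicit $\Bij_k$-isotypic decomposition of $C^k[n]$---whereas the paper's argument gives only the dimension. Your remark about the left/right action bookkeeping is exactly the right thing to flag and is handled correctly; the two one-sided ideals $c_{\lambda}\mathbb{Q}\Bij_k$ and $\mathbb{Q}\Bij_k c_{\lambda}$ indeed have the same dimension.
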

\section{Results}
\subsection{Overview}
Given a finitely generated $\Fin$-representation $V$, our fundamental goal is to compute $\dim V[n]$ as a function of $n$, along with its character as an $\Bij_n$-representation.  Homologically, we'd like to compute kernels, images, cokernels, homs, exts, etc. \\ \\
\noindent
At this point, the most optimistic hope would be to compute a finite projective resolution for $V$.  Unfortunately, we will see later that each $D_k$ has infinite projective dimension. \\ \\
\noindent
As a compromise, we compute finite resolutions using the $D_k$ in addition to Schur projectives.\subsection{The theorems and their corollaries}
\noindent
The following theorem is the main result of the paper.
\begin{thm}[Version A]\label{mainA}
Every uniformly presented vector space has a finite, leftward resolution by sums of Schur projectives and various $D_k$.
\end{thm}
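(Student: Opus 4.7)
First, I would reduce the problem to resolving a single imrep. Given $V = \langle f \rangle / \langle fg \rangle$, the short exact sequence $0 \to \langle fg\rangle \to \langle f\rangle \to V \to 0$ lives in the abelian category $\fg$ (Theorem~\ref{abcat}). If each of $\langle f\rangle$ and $\langle fg\rangle$ admits a finite resolution of the required form, I would attempt to assemble one for $V$ via a mapping cone construction, with any necessary lifts obtained by choosing Schur-projective covers. This reduces the theorem to resolving arbitrary imreps.

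For a single imrep $\langle f\rangle$ with $f: X \to Y$ in $\QFin$, I would induct on a complexity measure combining $|X|$, $|Y|$, and the extent to which $f$ fails to be an idempotent. Using Observation~\ref{rowcolops}, row and column operations on $f$ do not change the isomorphism class of $\langle f\rangle$. In particular, by inserting Young symmetrizers $c_\lambda$ from $\mathbb{Q}\Bij_X$ and $\mathbb{Q}\Bij_Y$ as in Example~\ref{sumexample}, the imrep block-decomposes as a direct sum of imreps of morphisms $\mathbb{P}_\mu \to \mathbb{P}_\lambda$ between isotype projectives. Each such piece is then analyzed case by case: either (i) it is an isomorphism, in which case $\langle f\rangle$ is a Schur projective and the resolution has length zero; (ii) it factors (in $\QFin$) through a partial $\partial_k$ of Definition~\ref{defpartial}, and the imrep is identified with a $D_k$, again of length zero; or (iii) it fits into a short exact sequence $0 \to \langle f'\rangle \to \mathbb{P} \to \langle f\rangle \to 0$ with $\mathbb{P}$ a Schur projective cover and $f'$ strictly simpler in the complexity measure, so that induction extends the resolution by one step.

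The main obstacle is twofold. The mapping-cone reduction is subtle because the $D_k$'s are not projective---each has infinite projective dimension, as foreshadowed after Definition~\ref{definedk}---so chain maps between $D_k$-containing resolutions need not lift, and one may have to work directly with $V$ rather than gluing resolutions of $\langle f\rangle$ and $\langle fg \rangle$. Even more substantively, the inductive step requires showing that the ``kernel'' $\langle f'\rangle$ in case (iii) is still an imrep, rather than a merely finitely generated subrepresentation, and that its complexity strictly decreases. The indispensability of the $D_k$'s is exactly what makes termination possible: they absorb the infinite Koszul tails that would otherwise prevent any Schur-projective-only resolution from terminating, by Definition~\ref{Koszul}. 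Pinning down this bookkeeping---characterizing precisely when a $D_k$ must appear as a direct summand of a term in the resolution---is what I expect Theorem~\ref{mainB} to make precise.
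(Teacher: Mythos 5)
Your proposal is a genuinely different route from the paper, but it has real gaps that the paper's machinery is specifically designed to close, and the plan as written does not indicate how to close them.

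The paper does not reduce to imreps and perform a case analysis. Instead (after first proving Theorem~\ref{upvequalsfinrep}, so that uniformly presented vector spaces and finitely generated $\Fin$-representations coincide) it inducts on the \emph{degree} of $V$. The inductive step is: take the canonical cover $\mathbb{S}H_0 V \twoheadrightarrow V$ from Lemma~\ref{goodcover}, let $K$ be its kernel, and show that after peeling off finitely many copies of $D_{k+1}$ and $D_{k+2}$ from $K$, what remains has degree strictly smaller than $V$. The crucial technical content is \emph{why} the kernel's homology is so constrained: Proposition~\ref{prune} (pruning) shows $H_0 K$ vanishes above degree $k+2$ and sits in the sign-isotypic component in degree $k+1$, and Proposition~\ref{shapelem} upgrades the degree-$(k+1)$ statement using exactness of the sequence (\ref{lesksv}) from the proof. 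Both ultimately rest on the Upper and Lower Squishing Lemmas~\ref{uppersquisher} and~\ref{lowersquisher}. Your plan never engages with this phenomenon, yet it is precisely what makes the induction terminate. Without it, there is no reason the kernel should be ``strictly simpler,'' and the complexity measure you gesture at (some combination of $|X|$, $|Y|$, and failure of idempotence) is not defined precisely enough to carry a decreasing induction.

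Two further points in the plan would not survive scrutiny. First, inserting Young symmetrizers on both sides decomposes $\langle X\rangle$ and $\langle Y\rangle$ as direct sums of isotype projectives, but a general $f : X \to Y$ is \emph{not} block-diagonal with respect to those decompositions; $\Vect^{\Fin}(\mathbb{P}_\mu, \mathbb{P}_\lambda)$ is nonzero for many pairs $(\mu,\lambda)$ (for instance $d_k: \Lambda^{k+1}\to\Lambda^k$), so the asserted ``block-decomposes as a direct sum of imreps'' is not automatic. Second, in your case (ii), factoring $f$ through some $\partial_k$ does not identify $\langle f\rangle$ with $D_k$; by Observation~\ref{rowcolops} it only realizes $\langle f\rangle$ as a subquotient of $D_k \simeq \langle \partial_{k-1}\rangle$, which is a much weaker conclusion. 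You rightly flag the obstruction to the mapping-cone reduction (that $D_k$ has infinite projective dimension, so lifts need not exist) and suggest working directly with $V$; the paper indeed does work directly with $V$ and bypasses the mapping cone entirely.
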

\noindent
We give a more precise version of the main result as Theorem \ref{mainB}.  The next theorem provides a concrete description of finitely generated $\Fin$-representations.
\begin{thm}\label{upvequalsfinrep}
Every uniformly presented vector space is a finitely generated $\Fin$-representation, and vice-versa.
\end{thm}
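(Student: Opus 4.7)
The plan is to prove each direction of the biconditional separately, with the forward direction being essentially a definitional unwinding and the backward direction containing the real content.

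For the forward direction, I would note that if $V = \langle f \rangle / \langle fg \rangle$ with $f: X \to Y$ a morphism in $\QFin$, then by Definition \ref{bracketnotation} the imrep $\langle f \rangle$ is the image of the natural transformation $\langle Y \rangle \to \langle X \rangle$ induced by precomposition with $f$. In particular, $\langle f \rangle$ is a quotient of the representable functor $\langle Y \rangle$, and $V$ is a further quotient, so $V$ is a quotient of $\langle Y \rangle$. This is exactly the definition of finitely generated.

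For the backward direction, the plan is to prove the slightly stronger lemma that every subobject of a representable $\langle Y \rangle$ in $\fg$ is an imrep of the form $\langle g \rangle$ for some $g$ in $\QFin$ with source $Y$. Starting from a finitely generated $V$, I pick a surjection $\pi: \langle Y \rangle \to V$ and set $K = \ker \pi$. Since $\fg$ is abelian by Theorem \ref{abcat}, the kernel $K$ is itself finitely generated, so I may choose a surjection $\rho: \langle Z \rangle \to K$. Composing with the inclusion $K \hookrightarrow \langle Y \rangle$ gives a natural transformation $\alpha: \langle Z \rangle \to \langle Y \rangle$ whose image is precisely $K$. The Yoneda lemma identifies $\alpha$ with an element of $\langle Y \rangle(Z) = \QFin(Y, Z)$, that is, with a morphism $g: Y \to Z$ in $\QFin$; and by the very definition of bracket notation, the image of $\alpha$ is $\langle g \rangle$. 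Therefore $K = \langle g \rangle$ and
\[
V \;\cong\; \langle Y \rangle \,/\, \langle g \rangle \;=\; \langle 1_Y \rangle \,/\, \langle 1_Y \cdot g \rangle,
\]
which is exactly the form of a uniformly presented vector space with presenting data $f = 1_Y$ and $g$.

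The main obstacle is not really in the argument above but in the appeal to Theorem \ref{abcat}: the substantive fact that the kernel of a map between finitely generated $\Fin$-representations remains finitely generated is a noetherian-type statement, and it is what powers the reduction of an arbitrary finitely generated representation to a uniform presentation. Once that is granted, the Yoneda-based identification of subobjects of $\langle Y \rangle$ with imreps is essentially formal, and no further delicate bookkeeping is required.
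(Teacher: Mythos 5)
Your argument is correct, but it routes the hard part through Theorem \ref{abcat} rather than through the squishing lemmas, and this is a genuinely different path from the paper's. The paper's proof of the backward direction does not invoke $\fg$ being abelian at all; instead it applies the Upper Squishing Lemma \ref{uppersquisher} directly to the subrepresentation $K \subseteq \langle Y \rangle$ to conclude that $K$ is generated in degree at most $k+1$ (where $k$ bounds the cardinalities appearing in $Y$), and then reads off the imrep structure from a basis of $K[0] \oplus \cdots \oplus K[k+1]$. This is strictly more elementary: it gives an explicit, quantitative bound on the degree of generation, and it relies only on the squishing machinery rather than on the full classification of simples. By contrast, Theorem \ref{abcat} in the paper is stated as a consequence of Corollary \ref{polydimgivesfg}, which in turn rests on Theorem \ref{simpleclassification}; so you are importing a considerably heavier downstream result. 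The dependency graph does appear to be acyclic (the classification of simples and the polynomial-growth corollary are proven via the squishing lemmas alone, not via Theorem \ref{upvequalsfinrep}), so there is no circularity, and your use of Yoneda to identify the kernel $K$ with $\langle g \rangle$ for $g \in \QFin(Y, Z)$ is exactly right. But you should be aware that what the paper is really doing is proving the finite-generation of $K$ from first principles as part of this theorem's proof, whereas you are treating it as a black-box consequence of the category being abelian; that trade-off is worth flagging, especially since the paper explicitly calls out ``closure under kernels'' as the nontrivial content of Theorem \ref{abcat}.
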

\noindent
Since the two notions coincide, we will generally make use of the (more convenient) language of $\Fin$-representations.
\begin{thm}\label{fgisfl}
An $\Fin$-representation is finitely generated if and only if it has finite length.
\end{thm}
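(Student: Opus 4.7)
The plan is to prove the two implications separately, leveraging the abelian structure of $\fg$ (Theorem \ref{abcat}) and the Yoneda identification $\Vect^{\Fin}(\tensorpower^n, V) \simeq V[n]$ from (\ref{yoneda}).

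For the forward direction, I first reduce to showing each tensor power $\tensorpower^k = \langle \; [k] \; \rangle$ has finite length, since any finitely generated $V$ is by definition a quotient of some direct sum $\tensorpower^{k_1} \oplus \cdots \oplus \tensorpower^{k_r}$, and finite length passes to finite direct sums and to quotients in the abelian category $\fg$. I would then induct on $k$, handling the base case $\tensorpower^0$ by direct inspection of its (few) subrepresentations. For the inductive step, I invoke the short exact sequence
\[
0 \longrightarrow \sk_{<k} \tensorpower^k \longrightarrow \tensorpower^k \longrightarrow C^k \longrightarrow 0.
\]
The subrepresentation $\sk_{<k} \tensorpower^k$, spanned by non-injective functions, is a quotient of the finite direct sum $\bigoplus \tensorpower^m$ indexed by surjections $[k] \twoheadrightarrow [m]$ with $m < k$ (via precomposition), so it has finite length by the inductive hypothesis. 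The quotient $C^k$ decomposes under its $\Bij_k$-action (by precomposition) as $\bigoplus_{\lambda} C_\lambda^{\oplus \dim \Sp_\lambda}$, and since each $C_\lambda$ is simple, $C^k$ has finite length. Extensions of finite length objects then being of finite length completes the inductive step.

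For the backward direction, I induct on the composition length of $V$. The zero representation is trivially finitely generated. If $V$ has positive length, I pick a simple subobject $S \subseteq V$ so that $V/S$ has strictly smaller length; the inductive hypothesis gives that $V/S$ is finitely generated. The key observation is that any simple representation $S$ is itself finitely generated: since $S \neq 0$, some $S[n]$ is nonzero, and by (\ref{yoneda}) any nonzero $v \in S[n]$ corresponds to a nonzero morphism $\tensorpower^n \to S$, whose image must equal $S$ by simplicity. This exhibits $S$ as a quotient of $\tensorpower^n = \langle [n] \rangle$. A finite generating set for $V$ is then assembled by lifting generators of $V/S$ to $V$ and combining with generators of $S$.

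The main obstacle is verifying that each $C_\lambda$ is simple, the claim invoked in the inductive step above. A natural approach exploits that $C_\lambda[k] \simeq \Sp_\lambda$ is $\Bij_k$-irreducible (cf. Proposition \ref{clambdadim} and Definition \ref{defineclambda}): given any nonzero subrepresentation $W \subseteq C_\lambda$, one first shows $W[k] \neq 0$ by pulling nonzero elements of $W[n]$ for $n \geq k$ back to degree $k$ via appropriate morphisms in $\QFin$; then $\Bij_k$-irreducibility of $\Sp_\lambda$ forces $W[k] = C_\lambda[k]$, and generation of $C_\lambda$ in degree $k$ (inherited from $\tensorpower^k$) forces $W = C_\lambda$.
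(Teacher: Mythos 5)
Your forward direction takes a genuinely different route from the paper's. The paper (Section \ref{pfgisfl}) establishes ACC and DCC directly: it uses the Upper Squishing Lemma together with Propositions \ref{killhomology} and \ref{homreflectssurjections} to show that for a degree-$k$ representation, zeroth homology of any subobject is controlled by restriction to $\Fin_{\leq k}$; an ascending chain therefore stabilizes because the restricted chain stabilizes and $H_0$-isomorphisms are detected by $\Res^{\Fin}_{\Fin_{\leq k}}$. Your approach instead aims to exhibit explicit composition series, reducing to the case of $\tensorpower^k$ and inducting via the short exact sequence $0 \to \sk_{<k}\tensorpower^k \to \tensorpower^k \to C^k \to 0$. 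The reduction itself is sound: $\sk_{<k}\tensorpower^k$ is indeed a quotient of the finite direct sum $\Vect^{\Fin}(\tensorpower^{<k},\tensorpower^k)\otimes\tensorpower^{<k}$, and the backward direction (simple implies finitely generated via Yoneda) is exactly what is tacitly assumed by the paper.

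There are, however, two problems in the forward direction. First, a small error: $C_\lambda$ is not simple when $\lambda$ is a single column; as noted in the proof of Theorem \ref{simpleclassification}, $C_{(1^k)} \simeq \Lambda^k$, which has length two with composition factors $D_k$ and $D_{k+1}$. The conclusion that $C^k$ has finite length survives, but the stated decomposition into simples does not.

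Second, and more seriously, your entire inductive step rests on the classification of simple objects (or at minimum the simplicity of each non-column $C_\lambda$ and the length-two structure of each $\Lambda^k$), and this is precisely Theorem \ref{simpleclassification}, whose proof requires both Squishing Lemmas. Your sketch of simplicity hides the hard step in the phrase ``pulling nonzero elements of $W[n]$ for $n \geq k$ back to degree $k$ via appropriate morphisms in $\QFin$.'' This is not automatic: if you apply a naive surjection $[n]\to[k]$ to a nonzero vector of $W[n]$, there is no reason the image in $W[k]$ should be nonzero, and indeed for general subrepresentations (e.g.\ $D_2 \subset \tensorpower^1$) it will not be. The content of Lemmas \ref{uppersquisher} and \ref{lowersquisher} is exactly the construction of an element $\omega\in\QFin([n],[\leq k],[n])$ whose induced endomorphism of $\Vect^{\Fin}(\tensorpower^n, C_\lambda)$ is the identity, forcing the round trip through low degree to be nonzero. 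Absent those lemmas, your sketch does not close, so your proof is only as self-contained as Theorem \ref{simpleclassification} itself — which is comparable in difficulty to, and in the paper proved alongside rather than prior to, the theorem you are proving.
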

\begin{cor}
The category $\fg$ has the Jordan-H{\"o}lder property.
\end{cor}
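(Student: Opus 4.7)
The plan is to deduce this corollary essentially by combining the two immediately preceding theorems with the classical Jordan--H\"older theorem for abelian categories. First, I would invoke Theorem \ref{abcat}, which tells us that $\fg$ is abelian, so in particular it has a well-defined notion of subobject, quotient, composition series, and simple object. Then I would invoke Theorem \ref{fgisfl}, which guarantees that every object $V \in \fg$ has finite length, meaning it admits a composition series
\[
0 = V_0 \subsetneq V_1 \subsetneq \cdots \subsetneq V_n = V
\]
whose successive quotients $V_i / V_{i-1}$ are simple objects of $\fg$.

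Once these two ingredients are in place, the Jordan--H\"older property---that any two composition series of $V$ have the same length and the same multiset of simple subquotients up to isomorphism---follows from the standard Schreier-refinement / Zassenhaus-lemma proof, which is a purely formal argument valid in any abelian (in fact, any exact or even modular-lattice-of-subobjects) category. I would simply cite this, or sketch the standard refinement argument: given two composition series, apply the Zassenhaus lemma pairwise to refine each into a common refinement, observe that both refinements have the same simple subquotients, and then note that simple subquotients cannot be further refined, forcing the original series to already match up to permutation.

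The only mildly non-trivial point is to confirm that the abelian structure on $\fg$ is the one inherited from $\Vect^{\Fin}$, so that ``simple in $\fg$'' coincides with ``simple as an $\Fin$-representation that happens to lie in $\fg$''; this is essentially the content of Theorem \ref{abcat} together with the fact that subobjects and quotients taken in $\fg$ agree with those taken in $\Vect^{\Fin}$. Granted this compatibility, no further work is required, and I expect no real obstacle---this corollary is a packaging result rather than a substantive new theorem.
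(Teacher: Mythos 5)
Your proposal matches the paper's (implicit) argument exactly: the corollary is stated without a separate proof precisely because it follows formally from Theorem \ref{abcat} (abelianness of $\fg$) and Theorem \ref{fgisfl} (finitely generated $\Leftrightarrow$ finite length), combined with the standard Jordan--H\"older theorem valid in any abelian category. Your additional remark about the abelian structure on $\fg$ being inherited from $\Vect^{\Fin}$ is a sensible sanity check and consistent with how the paper treats subobjects and quotients throughout.
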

\noindent
These results stand in contrast to the corresponding situation for $\mathcal{FI}$-modules, where finitely generated implies the ascending chain condition, but certainly not finite length.
\noindent
\begin{thm}[Classification of simples] \label{simpleclassification}
Every simple object of $\Vect^{\Fin}$ is isomorphic to exactly one of the following representations:
\be \nn
C_{\ydiagram{2}}, C_{\ydiagram{3}}, C_{\ydiagram{2,1}}, C_{\ydiagram{4}}, C_{\ydiagram{3,1}}, C_{\ydiagram{2,2}}, \ldots
\ee
where the subscript ranges over partitions with at least two boxes in the first row; or
\be \nn
D_0, D_1, D_2, \ldots
\ee
the successive cokernels of the Koszul complex.
\end{thm}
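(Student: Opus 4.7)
The plan is to establish three assertions: (a) each $C_\lambda$ (with $\lambda_1 \geq 2$) and each $D_k$ is simple; (b) no two of these representations are isomorphic; (c) every simple in $\Vect^{\Fin}$ is isomorphic to one of them. For (b), each candidate is distinguished by the pair (minimal nonzero degree, isomorphism type of the $\Bij$-representation there): this is $(|\lambda|, \Sp_\lambda)$ for $C_\lambda$ and $(k, \Sp_{(1^k)})$ for $D_k$, and the hypothesis $\lambda_1 \geq 2$ rules out any collision with a single-column partition.

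The key lemma behind (c) is a \emph{Bottom Degree Lemma}: if $V$ is simple and $k = \min\{n : V[n] \neq 0\}$, then $V[k]$ is an irreducible $\Bij_k$-representation, say $\Sp_\lambda$. Indeed, every non-bijective arrow $[k] \to [k]$ in $\Fin$ factors through some $[m]$ with $m < k$, where $V$ vanishes; hence any $\Bij_k$-subrep $W \subseteq V[k]$ generates an $\Fin$-subrep with degree-$k$ part equal to $W$, forcing $W \in \{0, V[k]\}$ by simplicity. Now split on $\lambda_1$. If $\lambda_1 \geq 2$, Proposition \ref{univiso} provides a surjection $\phi : \mathbb{P}_\lambda \twoheadrightarrow V$. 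Any $v \in (\mathbb{P}_\lambda \cap \sk_{<k} \tensorpower^k)[n]$ is a combination of non-injective $[k] \to [n]$, each factoring as $g_i h_i$ through some $[m_i]$ with $m_i < k$; using the idempotency of $c_\lambda$ (which commutes with postcomposition by $h_i$) one rewrites $v = \sum_i (h_i)_* (c_\lambda \cdot g_i)$ with each $c_\lambda \cdot g_i \in \mathbb{P}_\lambda[m_i]$, so $\phi(v) = \sum_i (h_i)_* \phi(c_\lambda \cdot g_i) = 0$. The second isomorphism theorem identifies the quotient $\mathbb{P}_\lambda / (\mathbb{P}_\lambda \cap \sk_{<k}\tensorpower^k)$ with $C_\lambda$, yielding $C_\lambda \twoheadrightarrow V$, an isomorphism by simplicity of $C_\lambda$. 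If instead $\lambda = (1^k)$, then $\mathbb{P}_\lambda = \Lambda^k$ is already supported in degrees $\geq k$, and $\phi : \Lambda^k \twoheadrightarrow V$ is obtained directly. The composite $\Lambda^{k+1} \xrightarrow{d_k} \Lambda^k \xrightarrow{\phi} V$ must vanish: a nonzero map $\Lambda^{k+1} \to V$ would be surjective by simplicity of $V$, impossible since $\Lambda^{k+1}[k] = 0 \neq V[k]$. Hence $\phi$ factors through $D_k = \coker(d_k)$, and simplicity of $D_k$ upgrades $D_k \twoheadrightarrow V$ to an isomorphism.

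The main obstacle is assertion (a): simplicity of $C_\lambda$ (with $\lambda_1 \geq 2$) and of $D_k$. Applying the Bottom Degree Lemma internally, a nonzero subrep $W \subseteq C_\lambda$ with $W[|\lambda|] \neq 0$ satisfies $W[|\lambda|] = C_\lambda[|\lambda|] \simeq \Sp_\lambda$; since $C_\lambda$ is generated in degree $|\lambda|$ (its definition as the image of $c_\lambda$ on $C^k$ realizes every vector via applying a $\Fin$-morphism to an injection and then $c_\lambda$-symmetrizing), this forces $W = C_\lambda$. The delicate point is ruling out nonzero subreps whose minimum degree strictly exceeds $|\lambda|$: one must show that, for any $0 \neq w \in C_\lambda[n]$ with $n > |\lambda|$, some $\Fin$-morphism $[n] \to [|\lambda|]$ sends $w$ to a nonzero element of $C_\lambda[|\lambda|]$. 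My strategy is to expand $w$ in a basis built from $c_\lambda$-images of injections $\iota : [|\lambda|] \hookrightarrow [n]$, select $\iota_0$ with nonzero coefficient, and apply a retraction of $\iota_0$. The main technical risk is cancellation among the many injections that compose to a bijection under a chosen retraction; I expect to neutralize this by averaging over a family of retractions and exploiting the idempotency of $c_\lambda$ to extract a nonzero $\Sp_\lambda$-component of the image. The argument for $D_k$ is parallel, using the embedding $D_k \simeq \langle \partial_{k-1} \rangle \subseteq \tensorpower^{k-1}$ and the antisymmetrizer $\varepsilon_k$ built into $\partial_{k-1}$ to produce a clean retraction argument.
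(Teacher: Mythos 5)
Your decomposition into (a) simplicity, (b) pairwise non-isomorphism, and (c) completeness is sound, and your Bottom Degree Lemma and the reduction to a surjection $\mathbb{P}_\lambda \twoheadrightarrow V$ (resp.\ $\Lambda^k \twoheadrightarrow V$), then factoring through $C_\lambda$ (resp.\ $D_k$), is a correct and clean route for (c). The factorization argument using idempotency of $c_\lambda$ to push $\mathbb{P}_\lambda \cap \sk_{<k}\tensorpower^k$ into lower degrees where $V$ vanishes is fine, and the second isomorphism theorem identification with $C_\lambda$ is correct. Your (b) is also correct.

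The genuine gap is (a): you do not actually prove that $C_\lambda$ (for $\lambda_1 \ge 2$) or $D_k$ is simple. You correctly isolate the hard point --- any nonzero $w \in C_\lambda[n]$ with $n > |\lambda|$ must be hit by some $[n] \to [|\lambda|]$ to a nonzero image --- but your proposed resolution (select an injection $\iota_0$ with nonzero coefficient, pick a retraction, and ``average over a family of retractions'' to beat cancellation) is not carried out, and it is not obvious that a naive average works. The cancellations you worry about are real, and controlling them is precisely the content of the paper's squishing machinery: the Lower Squishing Lemma constructs a single element $\hat\omega \in \QFin$ supported on compositions $[n]\to[i]\to[n]$ with $i \le k$ that restricts to the identity on $\UP(\tensorpower^n, C_\lambda)$, which in concrete terms says \emph{every} nonzero $w \in C_\lambda[n]$ admits a nonzero image under some map to degree $\le k$. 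The proof of that lemma is not an averaging argument; it hinges on exhibiting a carefully chosen product of difference tensors $(e_{i+1}-e_i)$ which is congruent to $1 + (1\,2)$ modulo non-bijections, and then invoking Artin--Wedderburn to pass from $1+(1\,2)$ to $1-\varepsilon$. Without that input (or an equivalent), assertion (a) is unsupported, and since your arguments for (b) and (c) both lean on (a), the whole classification rests on it. The paper establishes simplicity by noting that $C_\lambda$ is a quotient of $\Theta^{|\lambda|}$ (since $\lambda_1 \ge 2$ forces $c_\lambda$ orthogonal to $\varepsilon_{|\lambda|}$), hence squishes every $\tensorpower^n$ through $\tensorpower^{\le |\lambda|}$ by the Lower Squishing Lemma combined with Lemma \ref{squishext} and Observations \ref{transitivesquish}--\ref{squishinnersum}; this forces any subrepresentation to be generated in degree $\le |\lambda|$, where $C_\lambda$ is zero except for a single irreducible $\Bij_{|\lambda|}$-module. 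You would either need to invoke that lemma or rediscover its content.
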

\begin{cor} We have two $\mathbb{Z}$-bases for the $K$-theory of $\fg$.  One is given by the isotype projectives $\mathbb{P}_{\lambda}$ along with the (non-projective) functor $D_0$; the other is given by the simple objects listed above.
\end{cor}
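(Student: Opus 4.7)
The corollary splits into two independent claims, one for each basis, and I would handle them in sequence.

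The simple basis is essentially formal. By Theorem~\ref{abcat} the category $\fg$ is abelian, by Theorem~\ref{fgisfl} each of its objects has finite length, and the Jordan--Hölder corollary just stated gives that composition-factor multiplicities are well-defined. Consequently, the isomorphism classes of simples form a $\mathbb{Z}$-basis of $K(\fg)$, and Theorem~\ref{simpleclassification} identifies them with the $C_\lambda$ (for $\lambda_1\geq 2$) and the $D_k$ listed in the corollary.

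For the projective-plus-$D_0$ basis, I would first establish spanning. Given $V\in\fg$, Theorem~\ref{mainA} supplies a finite leftward resolution
\be \nn
0 \longrightarrow Q_n \longrightarrow \cdots \longrightarrow Q_0 \longrightarrow V \longrightarrow 0
\ee
whose terms $Q_i$ are direct sums of Schur projectives and various $D_k$. The Euler characteristic in $K(\fg)$ then expresses $[V]$ as an integer combination of the $[\mathbb{P}_\lambda]$'s (via Definition~\ref{schurdef}) and the $[D_k]$'s. For $k\geq 1$, the truncated Koszul complex
\be \nn
0 \longrightarrow D_k \longrightarrow \Lambda^{k-1} \longrightarrow \cdots \longrightarrow \Lambda^0 \longrightarrow D_0 \longrightarrow 0,
\ee
exact by the observation after Definition~\ref{Koszul}, yields $[D_k]=\sum_{j=0}^{k-1}(-1)^{k-1-j}[\Lambda^j]+(-1)^k[D_0]$ with each $\Lambda^j=\mathbb{P}_{(1^j)}$, completing the spanning step.

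For linear independence I would argue that the composition-factor map $\phi$ from the $\mathbb{Z}$-free module on the proposed basis to that on simples is unimodular. Set up the bijection $D_0\leftrightarrow D_0$, $\mathbb{P}_{(1^j)}\leftrightarrow D_{j+1}$ for $j\geq 0$, and $\mathbb{P}_\lambda\leftrightarrow C_\lambda$ for $\lambda_1\geq 2$, ordered by the level of the aligned simple (placing $D_k$ first within each level). By Proposition~\ref{univiso} the head of $\mathbb{P}_\lambda$ contains the aligned simple with multiplicity one. Induction on $|\lambda|$ then controls the remaining composition factors: for $\mathbb{P}_\lambda$ with $\lambda_1 \geq 2$, the short exact sequence $0 \to K_\lambda \to \mathbb{P}_\lambda \to C_\lambda \to 0$ (where $K_\lambda$ is the $c_\lambda$-summand of $\sk_{<|\lambda|}\tensorpower^{|\lambda|}$) shows that $K_\lambda$'s composition factors lie strictly below $C_\lambda$; for $\mathbb{P}_{(1^j)}=\Lambda^j$ the analogous Koszul sequence $0\to D_{j+1}\to\Lambda^j\to D_j\to 0$ does the same. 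The resulting lower-triangular integer matrix with unit diagonal is unimodular, so $\phi$ is an isomorphism and $\{[\mathbb{P}_\lambda]\}\cup\{[D_0]\}$ is a $\mathbb{Z}$-basis.

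The main obstacle is the triangularity claim for $\mathbb{P}_\lambda$ with $\lambda_1\geq 2$: one must verify inductively that all composition factors of $K_\lambda$ precede $C_\lambda$ in the chosen ordering. The small examples $[\mathbb{P}_{(2)}]=[D_1]+[D_2]+[C_{(2)}]$ and $[\mathbb{P}_{(2,1)}]=[D_2]+[D_3]+[C_{(2)}]+[C_{(2,1)}]$ exhibit the pattern, and the induction goes through using the description of $\sk_{<|\lambda|}\tensorpower^{|\lambda|}$ as a quotient of $\bigoplus_{m<|\lambda|}\tensorpower^m$.
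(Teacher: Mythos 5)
The paper states this corollary without an explicit proof (it is meant to follow easily from Theorem \ref{simpleclassification}, Theorem \ref{fgisfl}, and Theorem \ref{mainB}), so there is no ``official'' argument to compare against; your proposal is essentially the natural one and is correct in substance.  The Jordan--H\"older argument for the simple basis is standard.  Spanning of $\{[\mathbb{P}_\lambda]\}\cup\{[D_0]\}$ via the finite resolutions of Theorem~\ref{mainB} and the Koszul relation $[D_k]=\sum_{j=0}^{k-1}(-1)^{k-1-j}[\Lambda^j]+(-1)^k[D_0]$ is right.  And the unitriangular-transition-matrix argument for independence, aligning $\mathbb{P}_\lambda\leftrightarrow C_\lambda$ ($\lambda_1\geq 2$), $\Lambda^j\leftrightarrow D_{j+1}$, $D_0\leftrightarrow D_0$, does close the gap that a mere surjection of countably-generated free $\mathbb{Z}$-modules is not automatically injective.

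Two points deserve tightening.  First, the sentence ``By Proposition~\ref{univiso} the head of $\mathbb{P}_\lambda$ contains the aligned simple with multiplicity one'' is false in the column case: $D_{j+1}$ is the \emph{socle}, not the head, of $\Lambda^j$ (indeed $D_{j+1}[j]=0$, so $\Hom(\Lambda^j,D_{j+1})=0$ and Proposition~\ref{univiso} gives multiplicity \emph{zero} in the head).  This does not damage the argument---$D_j$ still sits strictly below the aligned $D_{j+1}$ in your ordering---but the claim should be stated in terms of the highest-level composition factor, not the head, so that it covers both cases.  Second, the crux of the induction (that every composition factor of $K_\lambda$ strictly precedes $C_\lambda$) really does hold, but for a reason slightly sharper than what you say: $K_\lambda=\sk_{<k}\mathbb{P}_\lambda$ is generated in degrees $<k$ (so $H_0K_\lambda[l]=0$ for $l\geq k$, not merely $l\geq k+2$), hence Lemma~\ref{goodcover} exhibits $K_\lambda$ as a quotient of a Schur projective of degree $\leq k-1$.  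The inductive hypothesis ``all $C$-factors of $\mathbb{P}_\nu$ have level $\leq|\nu|$ and all $D$-factors have level $\leq|\nu|+1$'' (with the $+1$ only needed for columns, where the $\Lambda^j$ contribute $D_{j+1}$) then bounds the factors of $K_\lambda$ at level $\leq k$, with $D_k$ the only possibility at level $k$; since you order $D_k$ before every $C_\mu$ at that level, triangularity holds.  Spelling out this degree-count bound is what the proof needs, and your closing sentence correctly identifies $\sk_{<k}\tensorpower^k$ as the place to get it.
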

\noindent
This corollary raises an urgent question: what is the transition matrix relating these two bases?
\begin{quest}
How does $\mathbb{P}_{\lambda}$ decompose into simple objects?  How is $C_{\lambda}$ resolved by Schur projectives?
\end{quest}
\begin{cor} \label{polydimgivesfg}
Let $V$ be an $\Fin$-representation with polynomially-bounded dimension
\be \nn
\dim V[n] \leq \psi(n),
\ee
where $\psi(x)$ is a polynomial of degree $d$.   Then $V$ is a quotient of a finite direct sum of tensor powers $\tensorpower^{k}$ with $k \leq d+1$.  In fact, the tensor powers with $k \leq d$ suffice, along with the representation $\Lambda^{d+1}$.
\end{cor}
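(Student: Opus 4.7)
The plan is to show that the subrepresentation $V' \subseteq V$ generated by $\bigcup_{k \le d} V[k]$ together with the sign-isotypic component $\varepsilon_{d+1} V[d+1] \subseteq V[d+1]$ is all of $V$.  Since $\dim V[k] \le \psi(k)$ is finite, choosing bases and invoking the Yoneda isomorphism (\ref{yoneda}) for $\tensorpower^k$ and the universal property of $\Lambda^{d+1} = \mathbb{P}_{(1^{d+1})}$ from Proposition \ref{univiso} yields a map
\be \nn
\bigoplus_{k=0}^{d} (\tensorpower^k)^{\oplus \dim V[k]} \; \oplus \; (\Lambda^{d+1})^{\oplus \dim \varepsilon_{d+1} V[d+1]} \; \longrightarrow \; V
\ee
whose image is exactly $V'$.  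The corollary thus reduces to showing $V' = V$.

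Suppose for contradiction that $W := V/V'$ is nonzero, and let $n$ be minimal with $W[n] \neq 0$.  By construction $W[k] = 0$ for $k \le d$, so $n \ge d+1$, and moreover if $n = d+1$ then $\varepsilon_{d+1} W[d+1] = 0$.  Decomposing $W[n]$ as a $\Bij_n$-representation and picking an irreducible summand $\Sp_\lambda \subseteq W[n]$ with $\lambda \vdash n$, Proposition \ref{univiso} supplies a nonzero morphism $\mathbb{P}_\lambda \to W$; let $W_\lambda$ be its image, written as $\mathbb{P}_\lambda/K$.  The vanishing $W_\lambda[j] = 0$ for $j < n$ forces $K[j] = \mathbb{P}_\lambda[j]$ for $j < n$, so the sub-functor $\sk_{<n}\mathbb{P}_\lambda$ generated by those values is contained in $K$.

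The central identification is $\mathbb{P}_\lambda/\sk_{<n}\mathbb{P}_\lambda \cong C_\lambda$: it follows by applying the idempotent $c_\lambda$ to the defining sequence $\sk_{<n}\tensorpower^n \hookrightarrow \tensorpower^n \twoheadrightarrow C^n$ preceding Definition \ref{defineclambda}, once one notes that $\sk_{<n}$ commutes with the $c_\lambda$-action on $\tensorpower^n$.  Granting this, $W_\lambda$ surjects onto $C_\lambda$, and Proposition \ref{clambdadim} gives
\be \nn
\dim V[m] \; \ge \; \dim W[m] \; \ge \; \dim W_\lambda[m] \; \ge \; \dim C_\lambda[m] \; = \; (\dim \Sp_\lambda)\binom{m}{n},
\ee
a polynomial of degree $n \ge d+1$ in $m$, contradicting the bound $\dim V[m] \le \psi(m)$ of degree $d$ for $m$ large.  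Hence $W = 0$, so $V' = V$ and the displayed map surjects onto $V$.  The only subtle step is the commutation of $\sk_{<n}$ with $c_\lambda$, which reduces to the observation that the skeleton sub-functor is preserved by every natural endomorphism of $\tensorpower^n$.
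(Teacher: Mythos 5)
There is a genuine gap, and it is located at the central step of the argument.  You have $\sk_{<n}\mathbb{P}_\lambda \subseteq K$ and $\mathbb{P}_\lambda/\sk_{<n}\mathbb{P}_\lambda \cong C_\lambda$ (that identification is correct, and your justification via naturality of $\sk_{<n}$ is fine).  But combining these gives a surjection in the \emph{opposite} direction from the one you claim: since $\sk_{<n}\mathbb{P}_\lambda \subseteq K \subseteq \mathbb{P}_\lambda$, the third isomorphism theorem gives
\be \nn
C_\lambda \;\cong\; \mathbb{P}_\lambda/\sk_{<n}\mathbb{P}_\lambda \;\twoheadrightarrow\; \mathbb{P}_\lambda/K \;=\; W_\lambda,
\ee
so $C_\lambda$ surjects onto $W_\lambda$, not the other way around.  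Consequently your dimension chain should read $\dim W_\lambda[m] \leq \dim C_\lambda[m]$, which gives no contradiction against the upper bound $\psi$.  The argument as written collapses at this point.

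The gap can be repaired, but it requires an input you do not currently invoke.  If $\lambda$ has at least two boxes in the first row, Theorem \ref{simpleclassification} says $C_\lambda$ is simple, and then a nonzero quotient forces $W_\lambda \cong C_\lambda$, restoring the equality $\dim W_\lambda[m] = (\dim \Sp_\lambda)\binom{m}{n}$ and the desired contradiction.  If $\lambda = (1^n)$ is a column, $C_\lambda \cong \Lambda^n$ is \emph{not} simple; its nonzero quotients are $\Lambda^n$ and $D_n$, and $\dim D_n[m]$ is a polynomial of degree $n-1$.  When $n = d+1$ this is only degree $d$ and yields no contradiction, which is precisely why the exclusion of the sign component in degree $d+1$ is essential; when $n > d+1$ the degree $n-1 > d$ still contradicts the bound.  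So the case analysis closes, but only after appealing to the simplicity results you never cite.

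Note also that the route here differs from the paper's.  The paper proves this corollary by combining the classification of simple objects with the squishing formalism: every composition factor of $V$ must squish $\tensorpower^n$ through $\tensorpower^{\leq d+1}$, extensions of such objects still squish, hence $\sk_{\leq d+1}V = V$.  Your approach replaces the squishing argument with a direct growth comparison against $C_\lambda$ via the identification $\mathbb{P}_\lambda/\sk_{<n}\mathbb{P}_\lambda \cong C_\lambda$, which is conceptually lighter; but once you patch the gap by invoking the simplicity of $C_\lambda$ (and the non-simplicity of $\Lambda^n$), the two proofs end up leaning on the same structural theorem.
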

\begin{cor} \label{tracestats}
Associated to any finitely generated $\Fin$-representation $V$ is a universal trace-computing polynomial
\be \nn
\varphi_{V}(x_0, x_1, x_2, \ldots)
\ee
so that for any endomorphism $f$ of a set of size $n > 0$, the trace of the induced map $Vf$ is given by 
\be \nn
 \Tr( Vf) = \varphi_{V}(n, a_1, a_2, \ldots)
\ee
where $a_i$ is the number of fixed points of $f^i$, the $i^{th}$ iterate of $f$.
\end{cor}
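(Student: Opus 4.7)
My plan is to build the universal polynomial $\varphi_V$ by resolving $V$ via Theorem~\ref{mainA}, verifying polynomiality of the trace on each building block (Schur projective or $D_k$), and reassembling by additivity of trace on exact sequences.  Fix a finite resolution
\be \nn
0 \longrightarrow P_\ell \longrightarrow P_{\ell-1} \longrightarrow \cdots \longrightarrow P_0 \longrightarrow V \longrightarrow 0
\ee
in which every $P_i$ is a direct sum of Schur projectives $\mathbb{P}_\lambda$ and various $D_k$.  Evaluating at $[n]$ yields an exact sequence of finite-dimensional vector spaces, and any set-endomorphism $f \colon [n] \to [n]$ lifts by functoriality to compatible endomorphisms on every term.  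Iterating additivity of trace on short exact sequences gives the Hopf trace formula $\Tr(Vf) = \sum_i (-1)^i \Tr(P_i f)$, reducing the problem to producing a universal polynomial formula for $\Tr(\mathbb{P}_\lambda f)$ and for $\Tr(D_k f)$ individually.

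For a Schur projective $\mathbb{P}_\lambda$ with $|\lambda| = k$, Proposition~\ref{schurfree} identifies $\mathbb{P}_\lambda[n] \simeq \mathbb{S}_\lambda(\mathbb{Q}[n])$, and the classical character identity for Schur functors says $\Tr(\mathbb{S}_\lambda(g)) = s_\lambda(\mbox{eigenvalues of } g)$ for any endomorphism $g$ of a finite-dimensional vector space.  Take $g$ to be the linearization of $f$.  The Frobenius formula writes $s_\lambda$ as a $\mathbb{Q}$-polynomial in the power sums $p_1, \ldots, p_k$, and $p_i$ evaluated at the eigenvalues of $g$ equals $\Tr(g^i) = a_i$.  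Hence $\Tr(\mathbb{P}_\lambda f)$ is a universal polynomial in $a_1, \ldots, a_k$, with no dependence on $n$.

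For the representations $D_k$, I will use that exactness of the Koszul complex at every $\Lambda^j$ with $j \geq 1$ supplies short exact sequences $0 \to D_j \to \Lambda^{j-1} \to D_{j-1} \to 0$ (identifying $D_j \simeq \im d_{j-1}$, with inclusion induced by $d_{j-1}$ itself).  Splicing these for $j = 1, \ldots, k$ produces the finite exact sequence
\be \nn
0 \longrightarrow D_k \longrightarrow \Lambda^{k-1} \longrightarrow \Lambda^{k-2} \longrightarrow \cdots \longrightarrow \Lambda^0 \longrightarrow D_0 \longrightarrow 0.
\ee
Since $D_0[n] = 0$ for $n \geq 1$, evaluation at $[n]$ drops the last term, yielding an exact resolution of $D_k[n]$ entirely by the $\Lambda^j[n]$---each a Schur projective already handled in the previous paragraph.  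A final application of the alternating-sum formula then expresses $\Tr(D_k f)$ as a polynomial in $a_1, \ldots, a_{k-1}$; the case $k = 0$ is immediate since $D_0[n]$ vanishes for $n > 0$.

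The main expected obstacle is that $D_0$ itself has infinite projective dimension and so admits no finite resolution by Schur projectives, so one cannot hope for a pure Schur-projective reduction.  This is precisely why the hypothesis $n > 0$ is essential: the argument dodges the difficulty by exploiting the fact that $D_0$ vanishes on nonempty sets, so its contribution to the alternating-sum formula drops out automatically and only pure Schur-functor calculations remain.
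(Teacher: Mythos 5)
Your argument is correct and is in essence the paper's proof: both reduce $\Tr(Vf)$ by additivity of trace along exact sequences to the building blocks consisting of Schur projectives (handled by power-sum expansion of Schur polynomials) and $D_0$ (which vanishes for $n>0$). The paper compresses the reduction by directly citing the $K$-theory basis $\{\mathbb{P}_\lambda\}\cup\{D_0\}$, whereas you unpack the same content by taking the resolution of Theorem~\ref{mainA} literally and then further resolving each $D_k$ through the truncated Koszul complex---but the underlying mechanism and the role of the hypothesis $n>0$ are identical.
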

\begin{cor} \label{polydim}
Let $V$ be a finitely generated $\Fin$-representation.  The dimension of $V[n]$ coincides with a polynomial for $n \geq 1$.  Indeed,
\be \nn
\dim V[n] = \varphi_{V}(n, n, n, n, \ldots).
\ee
\end{cor}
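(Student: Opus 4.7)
The plan is to deduce Corollary \ref{polydim} directly from Corollary \ref{tracestats} by specializing the endomorphism to the identity. Given $n \geq 1$, I apply the universal trace-computing polynomial $\varphi_V$ to the identity map $f = 1_{[n]} : [n] \longrightarrow [n]$. Since $V$ is a functor, $V(1_{[n]})$ is the identity on the vector space $V[n]$, so its trace is simply $\dim V[n]$.

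For the identity map, every iterate $f^i = 1_{[n]}$ fixes all $n$ elements of $[n]$, so the fixed-point count satisfies $a_i = n$ for each $i \geq 1$. Substituting into the formula of Corollary \ref{tracestats} gives
\be \nn
\dim V[n] \;=\; \Tr(V(1_{[n]})) \;=\; \varphi_V(n, n, n, \ldots),
\ee
and the right-hand side is manifestly a polynomial in the single variable $n$. The restriction $n \geq 1$ is inherited from the hypothesis $n > 0$ of Corollary \ref{tracestats}; this exclusion is essential, since for instance $D_0$ is one-dimensional at $[0]$ but vanishes elsewhere, already breaking polynomiality at $n = 0$.

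Consequently there is no real obstacle at this stage---the statement is a one-line specialization, and the genuine work is located upstream, in the proof of Corollary \ref{tracestats}, which in turn produces the polynomial $\varphi_V$ from the finite resolution by Schur projectives and $D_k$'s guaranteed by Theorem \ref{mainA} together with the known trace formulas for those building blocks on permutations of $[n]$.
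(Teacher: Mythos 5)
Your argument is correct and is exactly the specialization the paper intends: Corollary \ref{polydim} is stated immediately after Corollary \ref{tracestats} with no separate proof precisely because it follows by plugging in $f = 1_{[n]}$, for which the trace of $Vf$ is $\dim V[n]$ and every fixed-point count $a_i$ equals $n$. You also correctly note where the $n \geq 1$ hypothesis enters and why $D_0$ forces the exclusion of $n=0$.
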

\begin{cor}
The restriction of an $\Fin$-representation to the category $\mathcal{FI}$ has extremely limited local cohomology in the sense of Sam and Snowden \cite{GLEquivariantInfinite}.  Since Schur projectives are injective as $\mathcal{FI}$-modules (by \cite{GLEquivariantInfinite} Corollary 4.2.5), all we need to check are the $\Fin$-representations $D_k$.  But these are related by a long exact sequence to the torsion module $D_0$.  It follows that an $\Fin$-representation has vanishing higher local cohomology.  Further, any zeroth local cohomology is concentrated in grade zero.
\end{cor}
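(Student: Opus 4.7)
The plan is to apply Theorem~\ref{mainA} to reduce the claim to the two building blocks of any finite leftward resolution of $V$: Schur projectives and the representations $D_k$. If each such term is shown to have local cohomology concentrated in grade zero, the claim for general $V$ will follow by chasing the long exact sequences of local cohomology along the resolution (or, equivalently, via a hypercohomology spectral sequence).

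The Schur projective case relies on Corollary 4.2.5 of \cite{GLEquivariantInfinite}: every Schur projective $\mathbb{P}$ is injective when restricted to $\mathcal{FI}$. Moreover, $\mathbb{P}$ is a summand of some tensor power $\tensorpower^k$, which restricts to a direct sum of representable $\mathcal{FI}$-modules, so $\mathbb{P}|_{\mathcal{FI}}$ is torsion-free and $\Gamma_{\mathfrak{m}}(\mathbb{P}|_{\mathcal{FI}}) = 0$. Combined with injectivity, this forces $H^i_{\mathfrak{m}}(\mathbb{P}|_{\mathcal{FI}}) = 0$ for every $i \geq 0$, so Schur projectives contribute nothing.

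For the $D_k$, I would exploit the short exact sequence
\be \nn
0 \longrightarrow D_{k+1} \longrightarrow \Lambda^k \longrightarrow D_k \longrightarrow 0
\ee
extracted from the exactness of the Koszul complex at $\Lambda^k$ for $k \geq 0$. Since the middle term has vanishing local cohomology by the previous paragraph, the long exact sequence collapses to natural isomorphisms $H^i_{\mathfrak{m}}(D_k) \simeq H^{i+1}_{\mathfrak{m}}(D_{k+1})$ for every $i \geq 0$. Iterating shifts the local cohomology of any $D_k$ to that of the torsion module $D_0$, which by construction is supported on $[0]$ and is itself concentrated in grade zero. This yields the desired grade-zero concentration for each $D_k$.

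The main obstacle is the bookkeeping required to assemble the termwise computation into a statement about a general $V$ with a mixed resolution: the hypercohomology spectral sequence must be analyzed carefully to confirm that grade-zero concentration survives the filtration (and, where relevant, that the ``higher vanishing'' claim holds in the sense the author intends). A secondary subtlety is checking that the decomposition of $\tensorpower^k|_{\mathcal{FI}}$ as a sum of free $\mathcal{FI}$-modules passes through minimal idempotents to give torsion-freeness of every $\mathbb{P}_{\lambda}$---a property somewhat less explicit in \cite{GLEquivariantInfinite} than the injectivity invoked above, but crucial for the vanishing of $\Gamma_{\mathfrak{m}}$ on Schur projectives.
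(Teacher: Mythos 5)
Your proposal follows the same route as the paper's own sketch (which is really just the text of the corollary): resolve $V$ via Theorem~\ref{mainA}, dispose of the Schur projectives using injectivity from \cite{GLEquivariantInfinite}, and reduce the $D_k$ to $D_0$ by peeling $\Lambda$-terms off the Koszul complex. Both of your caveats are genuine and worth confirming. First, injectivity of $\mathbb{P}_\lambda|_{\mathcal{FI}}$ kills only $H^i_{\mathfrak{m}}$ for $i \geq 1$; the vanishing of $H^0_{\mathfrak{m}}$ really does require torsion-freeness, which holds because $\mathbb{P}_\lambda$ is a summand of some $\tensorpower^k$, whose $\mathcal{FI}$-transition maps (post-composition with injections) are visibly injective on basis vectors. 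Second, your worry about the phrase ``vanishing higher local cohomology'' is well-placed, and you can resolve it with the very dimension shift you set up: reading $H^i_{\mathfrak{m}}(D_k|_{\mathcal{FI}}) \simeq H^{i-1}_{\mathfrak{m}}(D_{k-1}|_{\mathcal{FI}})$ downward in $k$, the chain terminates either at $H^0_{\mathfrak{m}}(D_{k-i}|_{\mathcal{FI}}) = 0$ when $i < k$ (each $D_m$ with $m \geq 1$ is torsion-free as a subobject of $\Lambda^{m-1}$) or at $H^{i-k}_{\mathfrak{m}}(D_0)$ when $i \geq k$, so that $R\Gamma_{\mathfrak{m}}(D_k|_{\mathcal{FI}}) \simeq D_0[-k]$. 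In particular $H^k_{\mathfrak{m}}(D_k|_{\mathcal{FI}}) \neq 0$ for $k \geq 1$, so ``vanishing higher local cohomology'' cannot mean vanishing in cohomological degree $\geq 1$; what is true, and what your grade-zero-concentration argument establishes after the spectral-sequence bookkeeping, is that all of $R\Gamma_{\mathfrak{m}}(V|_{\mathcal{FI}})$ is supported on $[0]$. That is the reading under which the corollary is correct, and your proposal proves it.
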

\section{General theory and proofs}
\noindent
We expose the general theory of $\Fin$-representations.  Proofs of already-mentioned results begin in Section \ref{minorproofs}.
\subsection{Skeleta}
\begin{defn}[Tensor-Hom adjunction]\label{tensorhom}
The \textbf{tensor product} is a bifunctor
\be \nn
- \otimes - : \Vect \times \Vect^{\Fin} \longrightarrow  \Vect^{\Fin}
\ee
satisfying
\be \nn
 \Vect^{\Fin}(Z \otimes V, V') \simeq \Vect(Z, \;  \Vect^{\Fin}(V, V'))
\ee
for any finite dimensional vector space $Z$ and any pair of $\Fin$-representations $V$ and  $V'$ with $V$ finitely generated.  The counit of this adjunction is called \textbf{evaluation}.  The one-dimensional vector space $\mathbb{Q}$ is the (left) identity for the tensor product.  See \cite{CategoricalHomotopyTheory}.
\end{defn}
\begin{defn}[Skeleta] \label{skel}
Given representations $V$ and $P$ with $P$ finitely generated, the \textbf{$P$-skeleton of $V$} is defined
\be \nn
\sk_P V = \im \left(  \Vect^{\Fin}(P, V) \otimes P \overset{ev}{\longrightarrow} V \right)
\ee
the image of the evaluation map.  For convenience, the $\tensorpower^{\leq k}$-skeleton is called the $k$-skeleton hereafter and written
\be \nn
\sk_{\leq k} = \sk_{\scalebox{.7}{$\tensorpower^{\leq k}$}}.
\ee
Similarly, set
\be \nn
\sk_{< k} = \sk_{\scalebox{.7}{$\tensorpower^{< k}$}}.
\ee
Naturality of the counit makes $\sk_P$ into a functor $\sk_P : \UP \longrightarrow \Vect$;
furthermore, we have a canonical natural transformation called the \textbf{skeletal inclusion}
\be \nn
\sk_P V \hookrightarrow V
\ee
which is componentwise injective.
\end{defn}
\noindent
Concretely, the $k$-skeleton of a representation $V$ is the smallest subrepresentation that coincides with $V$ when evaluated on finite sets of size at most $k$.  Equivalently, any vector $v \in \sk_k V[n]$ in the $k$-skeleton of $V$ is in the $\Fin$-span of the vectors living in degree $k$ or below.\\ \\
\noindent
In light of these observations, we pause to offer the proof of Theorem \ref{skcolim}.
\begin{proof} 
Use the skeletal filtration
\be \label{skelproof}
\sk_{\leq 0} V \subseteq \sk_{\leq 1} V \subseteq \cdots \subseteq \sk_{\leq k} V \subseteq \cdots \subseteq V.
\ee
\end{proof}
\begin{obs} \label{calledB}
An injection of representations $U \hookrightarrow V$ induces an injection on skeleta $\sk_P U \hookrightarrow \sk_P V$.
\end{obs}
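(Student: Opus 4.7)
The plan is to reduce the statement to the fact, stated as part of Definition \ref{skel}, that the skeletal inclusion $\sk_P W \hookrightarrow W$ is componentwise injective for any representation $W$. Once this is in hand, the injectivity of the induced map on $P$-skeleta follows by a diagram chase.

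First I would set up the commutative square. Let $\iota : U \hookrightarrow V$ be the given injection. The functoriality of $\sk_P$ comes from the naturality of evaluation: post-composing a map $P \longrightarrow U$ with $\iota$ yields a map $P \longrightarrow V$, so we obtain a linear map $\Vect^{\Fin}(P,U) \longrightarrow \Vect^{\Fin}(P,V)$, compatible with evaluation. Tensoring with $P$ and taking images produces the canonical natural transformation $\sk_P \iota : \sk_P U \longrightarrow \sk_P V$ fitting into the commutative square
\begin{equation} \nonumber
\begin{array}{ccc}
\sk_P U & \hookrightarrow & U \\
\downarrow & & \downarrow \iota \\
\sk_P V & \hookrightarrow & V,
\end{array}
\end{equation}
where the horizontal arrows are the skeletal inclusions.

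Next I would chase an element. Fix a finite set $X$ and an element $u \in (\sk_P U)[X]$ with $(\sk_P \iota)_X(u) = 0$ in $(\sk_P V)[X]$. Applying the right-hand skeletal inclusion $\sk_P V \hookrightarrow V$, which is injective by Definition \ref{skel}, gives the image of $u$ under the composite $\sk_P U \longrightarrow \sk_P V \hookrightarrow V$ is zero. By commutativity of the square, this composite equals $\sk_P U \hookrightarrow U \overset{\iota}{\hookrightarrow} V$; both of these arrows are componentwise injective (the first again by Definition \ref{skel}, the second by hypothesis), so their composite is injective. Hence $u = 0$, which proves $(\sk_P \iota)_X$ is injective. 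Since $X$ was arbitrary, $\sk_P U \hookrightarrow \sk_P V$ is componentwise injective.

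There is no real obstacle here: the entire argument is packaged in two applications of the componentwise injectivity of skeletal inclusions plus functoriality of $\sk_P$. The only subtle point worth remarking on is that the commutativity of the square above is forced by the definition of $\sk_P$ as the image of the evaluation natural transformation, so nothing more than unwinding the definitions is required.
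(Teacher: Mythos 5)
Your proof is correct, and it fills in exactly the routine argument the paper leaves implicit by labeling this an Observation without proof: functoriality of $\sk_P$ gives the commuting square, the two componentwise-injective arrows $\sk_P U \hookrightarrow U \hookrightarrow V$ compose to an injection, and commutativity forces $\sk_P U \to \sk_P V$ to be injective. Nothing further is needed.
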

\begin{obs} \label{calledA}
Let $U$, $V$, and $P$ be representations.  The canonical inclusion
\be \nn
0 \longrightarrow \sk_P V \longrightarrow V
\ee
induces an isomorphism
\be \nn
0 \longrightarrow  \Vect^{\Fin} \left( \sk_P U, \sk_P V \right) \longrightarrow  \Vect^{\Fin} \left( \sk_P U, V \right) \longrightarrow 0.
\ee
In other words, any map $f: \sk_P U \longrightarrow V$ lands inside the $P$-skeleton of $V$.
\end{obs}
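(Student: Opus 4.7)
The plan is to verify injectivity and surjectivity of the induced map on $\Hom$-spaces separately. Injectivity is immediate: since $\sk_P V \hookrightarrow V$ is componentwise injective, any two natural transformations into $\sk_P V$ that agree after post-composition with this inclusion must already have been equal. The substantive content is therefore surjectivity --- that every $f : \sk_P U \longrightarrow V$ factors through $\sk_P V$.

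The engine of the proof is a tautological observation about the skeleton construction: for any representation $V$ and any natural transformation $\psi : P \longrightarrow V$, the image of $\psi$ lies inside $\sk_P V$. Indeed, $\sk_P V$ is by definition the image of the evaluation map $\Vect^{\Fin}(P, V) \otimes P \longrightarrow V$, and for each $n$ and each $p \in P[n]$ the element $\psi_{[n]}(p)$ equals $\mop{ev}(\psi \otimes p)$, hence lies in $\sk_P V[n]$.

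With this in hand I would argue as follows. Unwinding the definition of $\sk_P U$ as the image of evaluation, every vector in $\sk_P U[n]$ is a finite $\mathbb{Q}$-linear combination $\sum_i (\phi_i)_{[n]}(p_i)$ with $\phi_i : P \longrightarrow U$ natural transformations and $p_i \in P[n]$. By the tautological observation, each $\phi_i$ corestricts to a map $\phi_i' : P \longrightarrow \sk_P U$, and then $f \circ \phi_i' : P \longrightarrow V$ is a natural transformation which --- applying the observation again --- lands in $\sk_P V$. Computing
\[
f_{[n]}\!\left( \sum_i (\phi_i)_{[n]}(p_i) \right) \;=\; \sum_i (f \circ \phi_i')_{[n]}(p_i) \;\in\; \sk_P V[n]
\]
shows that $f$ sends $\sk_P U$ into $\sk_P V$, yielding the required factorization.

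No real obstacle is expected beyond careful notational bookkeeping: the main subtlety is distinguishing $\phi_i$ from its corestriction $\phi_i'$ so that $f \circ \phi_i'$ is well-defined as a natural transformation out of $\sk_P U$. The entire argument is powered by the definition of $\sk_P$ as the image of the evaluation counit, with no appeal to deeper structural results such as Theorem \ref{abcat}.
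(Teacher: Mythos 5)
Your argument is correct. The paper states this as an unproved Observation, and your argument is the natural unwinding of the definitions: injectivity follows from componentwise injectivity of the skeletal inclusion, and surjectivity follows because every vector in $\sk_P U[n]$ is a linear combination of values $(\phi_i)_{[n]}(p_i)$ of natural transformations $\phi_i : P \to U$, each of which corestricts to $\sk_P U$; then $f \circ \phi_i'$ is a map $P \to V$, whose image lands in $\sk_P V$ tautologically by the definition of the $P$-skeleton as the image of evaluation. Your attention to the distinction between $\phi_i$ and its corestriction $\phi_i'$ is the right bit of bookkeeping to make the chain of equalities rigorous, and you are right that no deeper structural input is needed.
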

\begin{prop} \label{skcounit}
Let $\mathcal{G}$ be a wide subcategory of $\Fin_{\leq k}$.  The $k$-skeleton of an $\Fin$-representation coincides with the image of the counit of the adjunction $\Lan_{\mathcal{G}}^{\Fin} \dashv \Res_{\mathcal{G}}^{\Fin}$.
\end{prop}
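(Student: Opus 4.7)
The plan is to show both sides of the claimed equality are, at each object $[n]$, the same concrete subspace of $V[n]$: namely, the $\Fin$-linear span of the vectors in degrees $0, 1, \ldots, k$.

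First I would unpack the skeleton. Using the Yoneda isomorphism $\Vect^{\Fin}(\tensorpower^j, V) \simeq V[j]$ from (\ref{yoneda}), one obtains
\[
\Vect^{\Fin}\!\left(\tensorpower^{\leq k}, V\right) \; \simeq \; V[0] \oplus V[1] \oplus \cdots \oplus V[k].
\]
Under this identification the evaluation map $\Vect^{\Fin}(\tensorpower^{\leq k}, V) \otimes \tensorpower^{\leq k} \to V$ sends a pure tensor $v \otimes f$ with $v \in V[j]$, $j \leq k$, and $f \in \Fin([j],[n])$ to $V(f)(v)$. Hence $(\sk_{\leq k} V)[n]$ is exactly the linear span of all $V(f)(v)$ as $j$ ranges over $\{0,1,\ldots,k\}$, $v$ over $V[j]$, and $f$ over $\Fin([j],[n])$.

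Next I would compute the image of the counit using the standard coend formula for left Kan extension along the inclusion $\mathcal{G} \hookrightarrow \Fin$:
\[
\left(\Lan_{\mathcal{G}}^{\Fin} \Res_{\mathcal{G}}^{\Fin} V\right)[n] \; = \; \int^{[j] \in \mathcal{G}} V[j] \otimes \mathbb{Q}\Fin([j],[n]),
\]
with counit sending the class of $v \otimes f$ to $V(f)(v)$. Its image at $[n]$ is the linear span of $V(f)(v)$ as $[j]$ ranges over the objects of $\mathcal{G}$, $v$ over $V[j]$, and $f$ over $\Fin([j],[n])$. Because $\mathcal{G}$ is a \emph{wide} subcategory of $\Fin_{\leq k}$, its objects are exactly $[0], [1], \ldots, [k]$, so this span matches the span describing $(\sk_{\leq k} V)[n]$ above.

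The only subtle point---which I expect to be the main conceptual obstacle---is the independence from $\mathcal{G}$: one might worry that if $\mathcal{G}$ has strictly fewer morphisms than $\Fin_{\leq k}$, then the image of the counit could shrink. The observation is that the $\mathcal{G}$-morphisms only enter through the coend equivalence relation that defines the \emph{source} $\Lan \Res V$; they do not restrict the set of $\Fin$-morphisms $f$ used to land the image inside $V[n]$. Thus the image is determined by the objects of $\mathcal{G}$ alone. Finally, naturality of the counit (resp. of the evaluation map) guarantees that these pointwise equalities assemble into an equality of subrepresentations of $V$, completing the proof. \qed
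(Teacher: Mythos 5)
Your proof is correct, and it takes a genuinely different route from the paper's. You argue \emph{pointwise and explicitly}: you unpack $(\sk_{\leq k} V)[n]$ via the Yoneda isomorphism as the span of all $V(f)(v)$ with $j \le k$, $v \in V[j]$, $f \in \Fin([j],[n])$, then use the coend formula for $\Lan_{\mathcal{G}}^{\Fin}$ to show the counit's image is exactly the same span, observing that the coend's equivalence relation only collapses the source, never the image—so only the \emph{objects} of $\mathcal{G}$ matter. The paper instead gives a diagrammatic argument: it introduces the minimal (discrete) wide subcategory $\mathcal{H}$, factors $\epsilon_{\mathcal{H}}^{\Fin}$ through $\epsilon_{\mathcal{G}}^{\Fin}$ by transitivity of Kan extensions, uses faithfulness of $\Res_{\mathcal{H}}^{\mathcal{G}}$ and right exactness of $\Lan_{\mathcal{G}}^{\Fin}$ to show the two counits have the same image, and finally identifies $\Lan_{\mathcal{H}}^{\Fin}\Res_{\mathcal{H}}^{\Fin}$ with $\Vect^{\Fin}(\tensorpower^{\leq k},-)\otimes\tensorpower^{\leq k}$ by uniqueness of left adjoints. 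Your version is more concrete and makes the key insight—that the morphisms of $\mathcal{G}$ only affect relations in the source, not the span in the target—visible at a glance; the paper's version buys a clean, computation-free argument that never leaves the level of natural transformations between functors, which fits the surrounding formalism. One small thing worth stating explicitly in your write-up: the image of the composite $\bigoplus_j V[j]\otimes\mathbb{Q}\Fin([j],[n]) \twoheadrightarrow \int^{[j]\in\mathcal{G}}V[j]\otimes\mathbb{Q}\Fin([j],[n]) \to V[n]$ equals the image of the second map precisely because the first is a surjection—that is the formal content behind ``the coend relation only affects the source.''
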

\begin{proof}
Let $\mathcal{H}$ denote the minimal wide subcategory of $\Fin_{\leq k}$, that is, the subcategory consisting of the sets $[0], [1], \ldots, [k]$ and only identity arrows.  By transitivity of Kan extensions, the counit $\epsilon_{\mathcal{H}}^{\Fin} : \Lan_{\mathcal{H}}^{\Fin} \Res_{\mathcal{H}}^{\Fin} \longrightarrow 1$ factors through the two intermediate counits
\be \nn
\Lan_{\mathcal{H}}^{\Fin} \Res_{\mathcal{H}}^{\Fin} \overset{\sim}{\longrightarrow} \Lan_{\mathcal{G}}^{\Fin} \Lan_{\mathcal{H}}^{G} \Res_{\mathcal{H}}^{G} \Res_{\mathcal{G}}^{\Fin} \longrightarrow \Lan_{\mathcal{G}}^{\Fin} \Res_{\mathcal{G}}^{\Fin} \longrightarrow 1.
\ee
The functor $\Res^{\mathcal{G}}_{\mathcal{H}}$ is faithful, so the counit $\epsilon^{\mathcal{G}}_{\mathcal{H}}$ is componentwise epic.  It follows from right exactness of $\Lan_{\mathcal{G}}^{\Fin}$ that the middle map is a surjection and $\im(\epsilon_{\mathcal{H}}^{\Fin}) = \im(\epsilon_{\mathcal{G}}^{\Fin})$.  Since $\mathcal{H} = \bigsqcup_{i\leq k} [i]$, and
\be \nn
\Res^{\Fin}_{[i]} \simeq \Vect^{\Fin}(\tensorpower^i, - )
\ee
are isomorphic functors, uniqueness of left adjoints tells us $\im(\epsilon_{\mathcal{H}}^{\Fin}) \simeq \im(ev_{\scalebox{.5}{$\tensorpower^{\leq k}$}}) \simeq \sk_{\leq k} $ as required.
\end{proof}

\subsection{Squishing}
The following crucial definition combined with the Squishing Lemmas \ref{uppersquisher} and \ref{lowersquisher} form the crux of the proof of the main result, Theorem \ref{mainA}.
\begin{defn}
Given representations $V$, $S$, and $P$, we say 
\be \nn
\mbox{\textbf{``\,$V$ squishes $S$ through $P$''}}
\ee
if there exists some finite-dimensional vector space $Z$ and maps
\be \nn
S \longrightarrow Z \otimes P \longrightarrow S
\ee
called \textbf{squishing maps} with the property that the induced composition
\be \nn
 \Vect^{\Fin}(S, V) \longleftarrow  \Vect^{\Fin}(Z \otimes P, V) \longleftarrow  \Vect^{\Fin}(S,V)
\ee
is the identity map.
\end{defn}
\begin{defn}[Triple Hom]\label{triplehom}
Let $\mathcal{C}$ be an additive category.  For objects $X$, $Y$, $Z$ in $\mathcal{C}$, define
\be \nn
\mathcal{C}(X, Y, Z) = \im \left( \mathcal{C}(X, Y) \otimes_{\mathbb{Z}} \mathcal{C}(Y, Z) \longrightarrow \mathcal{C}(X, Z) \right)
\ee
the image of the composition map.  Concretely, $\mathcal{C}(X, Y, Z)$ is the span of all maps from $X$ to $Z$ that factor through a map to $Y$.
\end{defn}
\begin{obs} \label{twosided}
Given objects $X, Y \in \mathcal{C}$,
\be \nn
\mathcal{C}(X,Y,X) \subseteq \mathcal{C}(X, X)
\ee
is the inclusion of a two-sided ideal.
\end{obs}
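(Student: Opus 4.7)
The plan is to unwind the definition and verify the two ideal axioms directly. By Definition \ref{triplehom}, $\mathcal{C}(X, Y, X)$ is the image of the composition map $\mathcal{C}(X, Y) \otimes_{\mathbb{Z}} \mathcal{C}(Y, X) \longrightarrow \mathcal{C}(X, X)$, so it is automatically a subgroup of the additive group $\mathcal{C}(X, X)$; what remains is closure under left and right multiplication by arbitrary endomorphisms of $X$.

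The key step is to take a generator $\alpha\beta \in \mathcal{C}(X, Y, X)$ with $\alpha \in \mathcal{C}(X, Y)$ and $\beta \in \mathcal{C}(Y, X)$, and an arbitrary $\psi \in \mathcal{C}(X, X)$, and then use the right-composition convention of the paper to rebracket:
\[
\psi \cdot (\alpha\beta) = (\psi\alpha)\beta, \qquad (\alpha\beta) \cdot \psi = \alpha(\beta\psi).
\]
In the first expression, $\psi\alpha$ is a morphism $X \to Y$ and $\beta$ is a morphism $Y \to X$, so the product factors through $Y$ and lies in $\mathcal{C}(X, Y, X)$. In the second expression, $\alpha$ is a morphism $X \to Y$ and $\beta\psi$ is a morphism $Y \to X$, so again the product factors through $Y$. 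Extending by $\mathbb{Z}$-bilinearity of composition handles general elements of $\mathcal{C}(X, Y, X)$, which are $\mathbb{Z}$-linear combinations of the generators $\alpha\beta$.

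There is no real obstacle; the only thing to be a little careful about is that $\mathcal{C}(X, Y, X)$ is defined as a linear span rather than a set of factoring morphisms, so one must confirm that the two ideal conditions are preserved under taking $\mathbb{Z}$-linear combinations. This is immediate from the bilinearity of composition in the additive category $\mathcal{C}$, so the verification reduces to the one-line rebracketing above. The whole argument is two or three lines once the composition convention is invoked.
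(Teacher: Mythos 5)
Your proof is correct, and since the paper states this as an unproved observation, your argument is precisely the one the reader is expected to supply: unwind Definition \ref{triplehom}, rebracket a generator $\alpha\beta$ under left and right multiplication by $\psi$ using associativity, and extend by $\mathbb{Z}$-bilinearity to the full linear span. Nothing more is needed.
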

\begin{defn}
Given representations $S$ and $P$, a \textbf{squisher} for a representation $V$ is an element $\omega \in \UP(S, P, S)$ with the property that $\UP(\omega, V) $ is the identity map on $\UP(S,V)$.  Tautologously, $V$ squishes $S$ through $P$ exactly when $V$ has a squisher.
\end{defn}
\begin{obs}[Transitivity of squishing] \label{transitivesquish}
If $V$ squishes $A$ through $B$ and $V$ also squishes $B$ through $C$, then $V$ squishes $A$ through $C$.
\end{obs}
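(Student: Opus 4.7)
The plan is to construct an explicit squisher for $A$ through $C$ by inserting the given $B$-to-$C$ squisher inside a decomposition of the $A$-to-$B$ squisher. Let $\omega_1 \in \UP(A, B, A)$ witness that $V$ squishes $A$ through $B$, and let $\omega_2 \in \UP(B, C, B)$ witness that $V$ squishes $B$ through $C$. By Definition \ref{triplehom}, I may write $\omega_1 = \sum_i \alpha_i \beta_i$ with $\alpha_i \in \UP(A, B)$ and $\beta_i \in \UP(B, A)$, and likewise $\omega_2 = \sum_j \gamma_j \delta_j$ with $\gamma_j \in \UP(B, C)$ and $\delta_j \in \UP(C, B)$.

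The candidate element is
\[
\omega \; = \; \sum_i \alpha_i \, \omega_2 \, \beta_i \; = \; \sum_{i,j} (\alpha_i \gamma_j)(\delta_j \beta_i).
\]
Each summand factors as $A \to B \to C \to B \to A$, which places $\omega$ in $\UP(A, C, A)$. To verify the squisher property, I pick any $f \in \UP(A, V)$ and compute $\UP(\omega, V)(f) = \omega f = \sum_i \alpha_i \, \omega_2 \, (\beta_i f)$, using the paper's right-action convention. For each $i$, the composition $\beta_i f$ lies in $\UP(B, V)$, so the squisher property of $\omega_2$ yields $\omega_2(\beta_i f) = \beta_i f$. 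Summing, $\omega f = \sum_i \alpha_i \beta_i f = \omega_1 f = f$, where the last step uses the squisher property of $\omega_1$.

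Since the whole argument is a direct unwinding of definitions, I do not expect any serious obstacle. The only points meriting care are the composition order---maps compose on the right throughout this paper---and the choice to sandwich $\omega_2$ between $\alpha_i$ and $\beta_i$ rather than placing it elsewhere; this placement is exactly what ensures that each summand genuinely factors through $C$ and hence that $\omega$ lives in the correct triple Hom $\UP(A,C,A)$.
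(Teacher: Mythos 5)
Your argument is correct, and since the paper leaves Observation~\ref{transitivesquish} unproved, this is exactly the routine unwinding of the definitions that the label ``Observation'' signals. Two points worth noting in your write-up are indeed the crucial ones: you must decompose $\omega_2=\sum_j\gamma_j\delta_j$ to see that $\omega$ lies in $\UP(A,C,A)$ rather than merely in $\UP(A,B,A)$, and the verification $\omega f = f$ uses the squisher property of $\omega_2$ pointwise on each $\beta_i f\in\UP(B,V)$ before invoking the squisher property of $\omega_1$, which is a clean two-stage application consistent with the paper's right-action convention.
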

\begin{obs} \label{sumsquish}
For fixed $V$ and $P$, the set 
\be \nn
\{ S  : \, \mbox{$V$ squishes $S$ through $P$} \}
\ee
 is closed under taking direct sums, taking direct summands, and tensoring with a finite-dimensional vector space.
\end{obs}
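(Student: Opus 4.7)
The plan is to use the squisher reformulation: $V$ squishes $S$ through $P$ iff there exists $\omega \in \UP(S, P, S)$ with $\UP(\omega, V)$ equal to the identity on $\UP(S, V)$, which in right-action notation is the condition $\omega f = f$ for every $f : S \to V$. I verify that each of the three operations preserves the existence of such an $\omega$, using Observation~\ref{twosided} to ensure membership in the triple hom is preserved under pre- and post-composition.

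For closure under \emph{direct summands}, write $S = S_1 \oplus S_2$ with inclusion $\iota : S_1 \to S$ and projection $\pi : S \to S_1$ satisfying $\iota \pi = 1_{S_1}$. Given a squisher $\omega \in \UP(S, P, S)$ for $V$, the natural candidate is $\omega_1 := \iota \omega \pi \in \UP(S_1, P, S_1)$, which lies in the triple hom because $\omega$ does. For any $f_1 : S_1 \to V$, apply the squisher property to $\pi f_1 : S \to V$ to get $\omega(\pi f_1) = \pi f_1$; hence
$$
\omega_1 f_1 \;=\; \iota \omega \pi f_1 \;=\; \iota (\pi f_1) \;=\; (\iota \pi) f_1 \;=\; f_1.
$$

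For closure under \emph{direct sums}, given squishers $\omega_j$ on $S_j$ ($j = 1, 2$), let $\iota_j, \pi_j$ be the structure maps of $S := S_1 \oplus S_2$ and set $\omega := \pi_1 \omega_1 \iota_1 + \pi_2 \omega_2 \iota_2 : S \to S$. Each summand factors through a (multiple of a) copy of $P$, so $\omega \in \UP(S, P, S)$. For $f : S \to V$, the squisher hypothesis applied to each $\iota_j f : S_j \to V$ yields $\omega_j(\iota_j f) = \iota_j f$, and summing gives $\omega f = (\pi_1 \iota_1 + \pi_2 \iota_2) f = 1_S f = f$. Closure under \emph{tensoring with a finite-dimensional $Z$} is then immediate: choosing a basis identifies $Z \otimes S$ with a finite direct sum of copies of $S$. (Alternatively, $1_Z \otimes \omega$ is a squisher for $Z \otimes S$ directly, factoring through $Z \otimes (Y \otimes P) \cong (Z \otimes Y) \otimes P$.)

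The main obstacle is essentially bookkeeping -- tracking the right-action composition convention and checking that triple-hom membership is preserved under the constructions -- rather than anything conceptual. The entire observation reduces to the three short algebraic identities above, each using the squisher property in exactly one spot.
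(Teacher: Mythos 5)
Your proof is correct, and since the paper states this as an unproved observation there is nothing to compare against: the squisher reformulation you use (a squisher $\omega \in \UP(S,P,S)$ with $\omega f = f$ for all $f : S \to V$, in the paper's right-action convention) is exactly the characterization the paper itself relies on in the surrounding lemmas, and your three verifications for summands ($\omega_1 = \iota\omega\pi$), sums ($\omega = \pi_1\omega_1\iota_1 + \pi_2\omega_2\iota_2$), and tensors ($Z \otimes S \cong S^{\oplus \dim Z}$) all check out. One tiny quibble: Observation~\ref{twosided} is stated for the ideal $\mathcal{C}(X,Y,X) \subseteq \mathcal{C}(X,X)$, whereas your summand step pre- and post-composes across different objects $S_1 \to S \to S \to S_1$; the conclusion $\iota\omega\pi \in \UP(S_1,P,S_1)$ is still immediate from the definition of the triple hom, so this is a citation nicety rather than a gap.
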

\begin{obs} \label{squishinnersum}
For fixed $V$ and $S$, the set 
\be \nn
\{ P  :  \, \mbox{$V$ squishes $S$ through $P$} \}
\ee
 is closed under taking direct sums.
\end{obs}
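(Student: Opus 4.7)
The plan is to observe that the property ``$V$ squishes $S$ through $P$'' is automatically inherited by any representation admitting $P$ as a direct summand. In particular, if $V$ squishes $S$ through $P_1$ then it also squishes $S$ through $P_1 \oplus P_2$ regardless of $P_2$, which yields (a stronger form of) the claimed closure under direct sums.

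To execute this, I would first unpack the given squishing data for $P_1$: a finite-dimensional vector space $Z$ together with maps $\alpha : S \longrightarrow Z \otimes P_1$ and $\beta : Z \otimes P_1 \longrightarrow S$ such that the induced composite
\be \nn
\Vect^{\Fin}(S,V) \longleftarrow \Vect^{\Fin}(Z \otimes P_1, V) \longleftarrow \Vect^{\Fin}(S, V)
\ee
is the identity. Then, using the canonical isomorphism $Z \otimes (P_1 \oplus P_2) \simeq (Z \otimes P_1) \oplus (Z \otimes P_2)$ together with the summand inclusion $\iota$ and projection $\pi$ of the first factor, I would set $\alpha' = \iota \alpha$ and $\beta' = \beta \pi$. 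Since $\pi \iota$ is the identity on $Z \otimes P_1$, we have $\beta' \alpha' = \beta \alpha$, so the maps $\alpha'$ and $\beta'$ exhibit $P_1 \oplus P_2$ as a squishing witness for $V$ and $S$. In squisher language (Definition \ref{triplehom}), this simply says that any element $\omega \in \Vect^{\Fin}(S, P_1, S)$ also lies in $\Vect^{\Fin}(S, P_1 \oplus P_2, S)$, and its action on $\Vect^{\Fin}(S,V)$ is unchanged by re-interpreting the factorization through the larger object.

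There is no substantive obstacle: the argument is a formal consequence of the distributivity of $\otimes$ over $\oplus$ and the biproduct structure on $\oplus$ that furnishes the inclusion-projection pair. The only point requiring a moment's care is the natural isomorphism $Z \otimes (P_1 \oplus P_2) \simeq (Z \otimes P_1) \oplus (Z \otimes P_2)$, which is standard for tensoring an $\Fin$-representation with a finite-dimensional vector space.
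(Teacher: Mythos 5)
Your argument is correct and is the natural one; since the paper labels this an Observation and gives no proof, there is no text to compare against beyond what you supply. Two small remarks. First, you in fact prove something stronger than stated: you show the set $\{P : V$ squishes $S$ through $P\}$ is upward-closed under passing from a representation to any representation containing it as a direct summand (take $P_1$ in the set and $P_2$ arbitrary), which immediately implies closure under direct sums of two members. The squisher reformulation you give is the cleanest way to see this: $\UP(S, P_1, S) \subseteq \UP(S, P_1 \oplus P_2, S)$ as subspaces of $\UP(S,S)$, so any squisher through $P_1$ is literally also a squisher through $P_1 \oplus P_2$. Second, a notational note: the paper composes left-to-right (maps act on the right, so $fg$ means ``$f$ then $g$''), whereas your $\alpha' = \iota\alpha$, $\beta' = \beta\pi$ read right-to-left; in the paper's convention you would write $\alpha' = \alpha\iota$, $\beta' = \pi\beta$, and $\alpha'\beta' = \alpha\iota\pi\beta = \alpha\beta$. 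The substance is unaffected. The only auxiliary fact you invoke, that $Z \otimes -$ carries $P_1 \oplus P_2$ to $(Z\otimes P_1)\oplus(Z\otimes P_2)$ with the expected inclusion/projection, holds because $Z \otimes -$ is additive (it is a left adjoint by Definition \ref{tensorhom}), so it preserves biproducts.
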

\begin{lem}[Extensions Squish] \label{squishext}
For fixed $S$ and $P$ with $S$ projective, the set 
\be \nn
\{ V  : \, \mbox{$V$ squishes $S$ through $P$} \}
\ee
 is closed taking subrepresentations, taking quotients, and taking extensions.
\end{lem}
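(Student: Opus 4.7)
The plan is to manufacture a squisher for the ``output'' representation from those of the ``inputs'' in each of the three cases, relying throughout on the fact that $\UP(S, P, S) \subseteq \UP(S, S)$ is a two-sided ideal (Observation~\ref{twosided}), so that it is closed under sums and under composition with arbitrary elements of $\UP(S, S)$.

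For the \emph{subrepresentation} case, suppose $\iota : U \hookrightarrow V$ and let $\omega \in \UP(S, P, S)$ squish $V$. I claim the same $\omega$ squishes $U$: given $\phi \in \UP(S, U)$, the composite $\phi\iota \in \UP(S, V)$ satisfies $\omega\phi\iota = \phi\iota$ by hypothesis, and $\iota$ being a monomorphism forces $\omega\phi = \phi$. This step does not yet use projectivity of $S$. For the \emph{quotient} case, projectivity enters: given a surjection $\pi : V \twoheadrightarrow W$ and $\phi \in \UP(S, W)$, lift $\phi$ to $\tilde\phi \in \UP(S, V)$ along $\pi$; apply the squisher to get $\omega\tilde\phi = \tilde\phi$, then post-compose with $\pi$ to conclude $\omega\phi = \phi$.

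The \emph{extension} case is the substantive one. Given $0 \to U \xrightarrow{\iota} V \xrightarrow{\pi} W \to 0$ with squishers $\omega_U$ and $\omega_W$ for $U$ and $W$, I propose the inclusion--exclusion combination
\be \nn
\omega \;=\; \omega_U + \omega_W - \omega_U \omega_W.
\ee
To check that $\omega$ squishes $V$, take $\phi \in \UP(S, V)$. The pushforward $\phi\pi \in \UP(S, W)$ is squished by $\omega_W$, so $(\omega_W\phi - \phi)\pi = 0$, whence $\omega_W\phi - \phi = \psi\iota$ for some $\psi : S \to U$. Applying $\omega_U$ on the left and invoking the squisher property for $U$ gives $\omega_U(\omega_W\phi - \phi) = (\omega_U \psi)\iota = \psi\iota = \omega_W\phi - \phi$. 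Expanding and rearranging produces $(\omega_U + \omega_W - \omega_U\omega_W)\phi = \phi$, as desired. Finally, $\omega$ lies in $\UP(S, P, S)$ because that subset is closed under sums and, being a two-sided ideal, contains the product $\omega_U\omega_W$.

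There is no deep obstacle; the only nonobvious ingredient is the inclusion--exclusion formula for $\omega$, and once one views $\UP(S, P, S)$ as an ideal of ``candidate identities'' it is natural. Projectivity of $S$ is used explicitly to handle quotients and implicitly to identify the factorization of $\omega_W\phi - \phi$ through $U$ as a genuine map $\psi \in \UP(S, U)$ on which $\omega_U$ can act.
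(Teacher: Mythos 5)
Your proof is correct, and in the extension case it is a genuine simplification of the paper's argument. You build the same candidate element the paper calls $\nu = \omega_A - \omega_A\omega_C + \omega_C$, which is exactly your $\omega_U + \omega_W - \omega_U\omega_W$ since $1-\nu = (1-\omega_U)(1-\omega_W)$. But where the paper contents itself with showing that $\UP(\nu, B)$ is merely an \emph{isomorphism} (via a commuting ladder and the 5-lemma) and then invokes the Cayley--Hamilton theorem to replace $\nu$ with a polynomial $\psi(\nu)$ that actually induces the identity, your element-chase shows that $\nu$ itself already acts as the identity: pushing $\phi$ to $W$ kills $\omega_W\phi - \phi$ under $\pi$, so it factors as $\psi\iota$ through $U$, and applying $\omega_U$ fixes $\psi$, which after expansion gives $\phi = (\omega_U + \omega_W - \omega_U\omega_W)\phi$. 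This avoids the 5-lemma and Cayley--Hamilton entirely and produces a cleaner, explicit squisher. Two small remarks: first, the paper packages the sub/quotient cases as the forward direction of a biconditional about short exact sequences, whereas you treat them separately, which is equivalent; second, your closing sentence overstates the role of projectivity in the extension step---the factorization $\omega_W\phi - \phi = \psi\iota$ needs only left exactness of $\UP(S,-)$ (the universal property of the kernel), not projectivity. Projectivity is genuinely used only in your quotient case (to lift), and in the paper's proof (to make the rows of the ladder exact so the 5-lemma applies).
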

\begin{proof}
Given a short exact sequence of representations
\be \nn
0 \longrightarrow A \longrightarrow B \longrightarrow C \longrightarrow 0
\ee
we must show that $A$ and $C$ both squish $S$ through $P$ if and only if $B$ squishes $S$ through $P$.  Suppose $B$ squishes $S$ through $P$, witnessed by some squisher $\omega \in \UP(S, P, S)$.  Build the diagram
\be \nn
\begin{xy}
(0,20)*+{0}="a"; (30,20)*+{\UP(S, A)}="b"; (60,20)*+{\UP(S, B)}="c"; (90,20)*+{\UP(S, C)}="d"; (120,20)*+{0}="e"; 
(0,0)*+{0}="f"; (30,0)*+{\UP(S, A)}="g"; (60,0)*+{\UP(S, B)}="h"; (90,0)*+{\UP(S, C)}="i"; (120,0)*+{0}="j";
{\ar "a"; "b"};
{\ar "b"; "c"};
{\ar "c"; "d"};
{\ar "d"; "e"};
{\ar "f"; "g"};
{\ar "g"; "h"};
{\ar "h"; "i"};
{\ar "i"; "j"};
{\ar "b"; "g"};
{\ar_1 "c"; "h"};
{\ar "d"; "i"};
\end{xy}
\ee
where the rows are still exact since $\UP(S,-)$ is exact, and the vertical maps are induced by $\omega$.  By construction, the middle map is the identity.  It follows that the outer two vertical maps are also identity maps.  We conclude that $\omega$ is a squisher for both $A$ and $C$ as well. \\
\\
\noindent
In the other direction, let $\omega_A, \omega_C \in \UP(S, P, S)$ be squishers for $A$ and $C$, respectively:
\bea \nn
\UP(\omega_A, A) & = & 1 \\ \nn
\UP(\omega_C, C) & = & 1.
\eea
Define $\nu = \omega_A - \omega_A \omega_C + \omega_C$, noting that $\nu \in \UP(S, P, S)$ by Observation \ref{twosided}.  We know that $\UP(1-\omega_A, A)=0$ and $\UP(1-\omega_C, C)=0$, so the composite $(1-\omega_A) (1-\omega_C) = 1 - \nu$ gives zero in both cases:
\bea \nn
\UP(1-\nu, A) & = & 0 \\ \nn
\UP(1-\nu, C) & = & 0.
\eea
It follows that $\nu$ is a squisher for $A$ and $C$ simultaneously.  As before, build the diagram of induced endomorphisms
\be \nn
\begin{xy}
(0,20)*+{0}="a"; (30,20)*+{\UP(S, A)}="b"; (60,20)*+{\UP(S, B)}="c"; (90,20)*+{\UP(S, C)}="d"; (120,20)*+{0}="e"; 
(0,0)*+{0}="f"; (30,0)*+{\UP(S, A)}="g"; (60,0)*+{\UP(S, B)}="h"; (90,0)*+{\UP(S, C)}="i"; (120,0)*+{0}="j";
{\ar "a"; "b"};
{\ar "b"; "c"};
{\ar "c"; "d"};
{\ar "d"; "e"};
{\ar "f"; "g"};
{\ar "g"; "h"};
{\ar "h"; "i"};
{\ar "i"; "j"};
{\ar_1 "b"; "g"};
{\ar "c"; "h"};
{\ar^1 "d"; "i"};
\end{xy}
\ee
where the vertical maps are induced by $\nu$.  This time the outer two vertical maps are identity maps.  The 5-lemma forces the middle map $e=\UP(\nu, B)$ to be an isomorphism.  Let $\varphi_e(x)$ be the characteristic polynomial of $e$, and define
\be \nn
\psi(x) = 1- \frac{\varphi_e(x)}{\det(e)}.
\ee
Note that $\det(e) \neq 0$ since $e$ is invertible and that $\psi(x)$ has no constant term.  Let
$\omega_B = \psi(\nu)$, observing that $\omega_B \in \UP(S,P,S)$, once again by Observation \ref{twosided}.  Compute
\bea \nn
\UP(\omega_B, B) & = & \UP(\psi(\nu), B) \\ \nn
& = & \psi( \UP(\nu, B)) \\ \nn
& = & \psi(e) \\ \nn
& = & 1 - \frac{\varphi_e(e)}{\det(e)} \\
& = & 1 - \frac{0}{\det(e)}   \label{cayham} \\ \nn
& = & 1,
\eea
where we used the Cayley-Hamilton theorem in (\ref{cayham}).  It follows that $\omega_B$ is a squisher for $B$ and we are done.
\end{proof}
\begin{obs} \label{skelsquish}
If $P$ squishes $S$ through $P$, then $\sk_P V$ squishes $S$ through $P$ as well for any representation $V$.  Indeed, $\sk_P V$ is defined as a quotient of an extension (actually, a direct sum) of copies of $P$.
\end{obs}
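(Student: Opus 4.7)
The plan is to reduce the claim to the Extensions Squish Lemma \ref{squishext}, exploiting the structural description of $\sk_P V$ as a quotient of a finite direct sum of copies of $P$. Throughout, the argument tacitly assumes $S$ is projective, which is the setting in which the squishing machinery is invoked.

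Unwinding the definition, $\sk_P V$ is the image of the evaluation map $Z \otimes P \to V$, where $Z = \Vect^{\Fin}(P, V)$. Because $P$ is finitely generated, $Z$ is finite dimensional, and so $Z \otimes P \simeq P^{\oplus \dim Z}$ is a finite direct sum of copies of $P$. Starting from the hypothesis that $P$ squishes $S$ through $P$, a short induction using closure under extensions in Lemma \ref{squishext} (iteratively splicing $P$ onto $P^{\oplus n}$ as a split extension) shows that $P^{\oplus n}$ squishes $S$ through $P$ for every finite $n$. Applied with $n = \dim Z$, the whole representation $Z \otimes P$ squishes $S$ through $P$.

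Finally, Lemma \ref{squishext} also gives closure under quotients, and $\sk_P V$ is by construction a quotient of $Z \otimes P$, so $\sk_P V$ squishes $S$ through $P$, as required. The only point requiring care---though hardly an obstacle---is that Lemma \ref{squishext} has a projectivity hypothesis on $S$; in every application below $S$ will be a Schur or isotype projective, so this is automatic.
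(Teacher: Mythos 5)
Your proposal is correct and follows exactly the paper's one-line argument: $\sk_P V$ is a quotient of the finite direct sum $Z \otimes P \simeq P^{\oplus \dim Z}$, and Lemma \ref{squishext} gives closure under (split) extensions and quotients. You also rightly flag the implicit projectivity hypothesis on $S$ inherited from Lemma \ref{squishext}, which the paper leaves tacit but which does hold in every subsequent application.
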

\subsection{Squishers for $\Fin$}
\noindent
The following proposition prepares us to find squishers for $\Fin$.
\begin{prop} \label{missedfactor}
Let $f: [p] \longrightarrow [q]$ be a function, and let $v_1, v_2, \ldots, v_q \in \mathbb{Q}[q]$ be vectors.  Define
\be \nn
v = v_1 \otimes v_2 \otimes \cdots \otimes v_q \in \otimes^q \left( \mathbb{Q}[q] \right) \simeq \QFin([q],[q])
\ee
where the isomorphism of vector spaces is spelled out in Example \ref{oneline}.  If the function $f$ does not hit $k \in [q]$ in its image, and if $v_k$ has coefficient-sum zero, then $fv=0$. 
\end{prop}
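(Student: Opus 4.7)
The plan is to unfold the isomorphism of Example~\ref{oneline} in a basis and then do the direct linear-algebra computation; the claim essentially follows from distributivity, once the composition convention is translated to the tensor side correctly.

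First, I would recall what the isomorphism $\QFin([q],[q]) \simeq \tensorpower^q(\mathbb{Q}[q])$ does on basis vectors. A pure tensor $e_{j_1} \otimes \cdots \otimes e_{j_q}$ on basis vectors $e_j \in \mathbb{Q}[q]$ corresponds to the $\Fin$-map whose one-line notation is $j_1 j_2 \cdots j_q$, i.e.\ the function $g \colon [q] \to [q]$ sending $i \mapsto j_i$. Under the paper's right-action convention, the composite $fg \in \QFin([p],[q])$ is the classical composition $g \circ f$, which sends $i \mapsto j_{f(i)}$. Translated back through the isomorphism, this reads
\[
f \cdot \bigl( e_{j_1} \otimes \cdots \otimes e_{j_q} \bigr) \;=\; e_{j_{f(1)}} \otimes \cdots \otimes e_{j_{f(p)}}.
\]

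Next I would expand $v$ in the standard basis: writing $v_i = \sum_{j=1}^q c_{ij}\, e_j$,
\[
v \;=\; \sum_{j_1, \ldots, j_q} c_{1 j_1} c_{2 j_2} \cdots c_{q j_q}\, e_{j_1} \otimes \cdots \otimes e_{j_q},
\]
and by linearity of composition,
\[
fv \;=\; \sum_{j_1, \ldots, j_q} c_{1 j_1} c_{2 j_2} \cdots c_{q j_q}\, e_{j_{f(1)}} \otimes \cdots \otimes e_{j_{f(p)}}.
\]

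The key observation is that the hypothesis $k \notin f([p])$ means the index $j_k$ never appears in any output slot $j_{f(1)}, \ldots, j_{f(p)}$. So $c_{k j_k}$ is the only factor in the coefficient that depends on $j_k$, and I can pull the sum over $j_k$ outside:
\[
fv \;=\; \Biggl(\,\sum_{j_k=1}^{q} c_{k j_k}\Biggr) \cdot \sum_{\substack{j_1, \ldots, j_q \\ j_k \text{ omitted}}} \Bigl(\prod_{i \neq k} c_{i j_i}\Bigr)\, e_{j_{f(1)}} \otimes \cdots \otimes e_{j_{f(p)}}.
\]
The prefactor $\sum_{j_k} c_{k j_k}$ is precisely the coefficient-sum of $v_k$, which vanishes by hypothesis, so $fv = 0$.

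The computation has no real obstacle; the only place to stumble is the bookkeeping at the outset, namely matching the ``maps compose on the right'' convention with the one-line/tensor correspondence so that precomposition with $f$ really does pull back the tensor factor indexed by $i \in [p]$ from slot $f(i) \in [q]$ rather than the other way around. Once this is straight, the factorization of the sum is immediate.
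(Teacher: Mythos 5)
Your argument is correct and is essentially the paper's proof: the paper reduces to the case where $v_i$ is a basis vector for $i\neq k$ and then expands only $v_k$, while you expand all tensor factors at once, but the key step is identical in both — since $f$ misses $k$, the $k$-th factor plays no role in the composite, so the coefficient sum of $v_k$ pulls out of the sum and kills it. Your care in matching the right-action convention with the one-line-notation isomorphism is exactly the bookkeeping the paper leaves implicit.
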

\begin{proof}
We begin with the case where each $v_i$ is a basis vector for $i \neq k$.  Suppose $v_k = \sum \alpha_j e_j$ a linear combination of basis vectors with $\sum \alpha_j = 0$.  Expanding,
\bea \nn
v & = & v_1 \otimes v_2 \otimes \cdots \otimes v_k \otimes \cdots \otimes v_q \\ \nn
  & = & v_1 \otimes v_2 \otimes \cdots \otimes  \left(\sum \alpha_j e_j \right) \otimes \cdots \otimes v_q \\ \nn
  & = & \sum \alpha_j \left( v_1 \otimes v_2 \otimes \cdots \otimes  e_j \otimes \cdots \otimes v_q\right)
\eea
Since the image of $f$ omits $k$, each of these tensors---when interpreted as a function---maps to the same function under precomposition with $f$.  We know that $\sum \alpha_j = 0$, so $fv=0$ as required.  The general case follows by linearity.
\end{proof}
\begin{lem}[Upper Squishing Lemma for $\Fin$]\label{uppersquisher}
The projective $\tensorpower^k$ squishes $\tensorpower^n$ through $\tensorpower^{\leq k+1}$ for all $n$.
\end{lem}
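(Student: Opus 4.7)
The plan is to use the Yoneda lemma to translate the squisher condition into $\QFin$. By Yoneda, $\UP(\tensorpower^p, \tensorpower^q) \simeq \QFin([q],[p])$, with composition of natural transformations corresponding to composition in $\QFin$ (with reversed order). The condition $\omega \in \UP(\tensorpower^n, \tensorpower^{\leq k+1}, \tensorpower^n)$ becomes: $\tilde\omega \in \QFin([n],[n])$ is a $\mathbb Q$-linear combination of endomorphisms of $[n]$ each factoring through some $[m]$ with $m \leq k+1$. The identity-on-homs requirement becomes: $\tilde\psi \tilde\omega = \tilde\psi$ for every $\tilde\psi \in \QFin([k], [n])$.

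If $n \leq k+1$, take $\tilde\omega = 1_{[n]}$. Assume $n > k+1$. Set $\bar e = \tfrac1n(e_1 + \cdots + e_n)$ and $e'_j = e_j - \bar e$, so $e_j = \bar e + e'_j$ and each $e'_j$ has coefficient-sum zero in $\mathbb Q[n]$. Under the isomorphism $\QFin([n],[n]) \simeq (\mathbb Q[n])^{\otimes n}$ from Example \ref{oneline}, $1_{[n]}$ corresponds to $e_1 \otimes \cdots \otimes e_n$, and expanding gives
\[
1_{[n]} = \bigotimes_{j=1}^n (\bar e + e'_j) = \sum_{A \subseteq [n]} v^A, \qquad v^A = \bigotimes_{j=1}^n w_j^A,
\]
where $w_j^A = e'_j$ if $j \in A$ and $w_j^A = \bar e$ otherwise. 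I will take $\tilde\omega = \sum_{|A| \leq k} v^A$.

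The first thing to verify is that each $v^A$ with $|A| \leq k$ factors through $[\leq k+1]$. Since $n > k \geq |A|$ we have $A \neq [n]$, so the $n$ slots of $v^A$ take exactly $1 + |A| \leq k+1$ distinct vector values: the single value $\bar e$ (shared among slots $j \notin A$) and the distinct vectors $e'_j$ for $j \in A$. Listing these distinct values in some order as $w_1, \dots, w_{1+|A|} \in \mathbb Q[n]$ and letting $\tilde b \colon [n] \to [1+|A|]$ record which value appears at each slot, the tensor-slot reindexing from Example \ref{oneline} identifies $v^A$ with $\tilde b \cdot (w_1 \otimes \cdots \otimes w_{1+|A|})$, exhibiting the required factorization.

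The second thing to verify is $\tilde\psi \tilde\omega = \tilde\psi$. Expand $\tilde\psi = \sum_\ell c_\ell \psi_\ell$ with each $\psi_\ell\colon[k] \to [n]$ a function. For any $A$ with $|A| > k$, the image of $\psi_\ell$ (of size at most $k < |A|$) must miss some $j_0 \in A$, and since the $j_0$-th slot of $v^A$ is $e'_{j_0}$, which has coefficient-sum zero, Proposition \ref{missedfactor} gives $\psi_\ell v^A = 0$. Therefore $\tilde\psi \sum_{|A| > k} v^A = 0$, and $\tilde\psi \tilde\omega = \tilde\psi \cdot 1_{[n]} - \tilde\psi \sum_{|A|>k}v^A = \tilde\psi$, completing the proof. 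The main obstacle along the way is the bookkeeping for Yoneda's order-reversal and the factorization-through-$[m]$ claim, both of which come down to careful use of the tensor-slot description of $\QFin$ in Example \ref{oneline}.
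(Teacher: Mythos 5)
Your proof is correct, and it takes a genuinely different route from the paper's.  The paper first invokes transitivity of squishing (Observation \ref{transitivesquish}) to reduce to showing $\tensorpower^k$ squishes $\tensorpower^n$ through $\tensorpower^{<n}$ --- only one step down --- and then builds the telescoping vector
\[
\nu \;=\; e_1 \otimes \cdots \otimes e_{n-k-1} \otimes (e_{n-k}-e_{n-k-1}) \otimes (e_{n-k+1}-e_{n-k}) \otimes \cdots \otimes (e_n - e_{n-1}),
\]
observes that $\nu$ is congruent to $1_{[n]}$ modulo non-bijections and that $\nu$ annihilates $\QFin([k],[n])$ via Proposition \ref{missedfactor}, and takes $1-\nu$ (a combination of non-bijections, hence factoring through $\tensorpower^{<n}$) as the squisher.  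You instead construct the squisher in one shot: expanding $1_{[n]} = \bigotimes_j(\bar e + e'_j) = \sum_A v^A$ via the barycentric shift and keeping only the $v^A$ with $|A| \leq k$, you show directly that each surviving term factors through $\tensorpower^{m}$ with $m \leq k+1$ (by counting the distinct vectors appearing among the $n$ slots) while the discarded terms kill $\QFin([k],[n])$ for the same Proposition \ref{missedfactor} reason.  Both arguments partition the identity into ``a piece that annihilates $\QFin([k],[n])$'' plus ``a piece that factors through smaller sets,'' and both rest on Proposition \ref{missedfactor}; the paper's telescoping variant keeps the coefficient-sum-zero factors in $k+1$ fixed consecutive slots (only $2^{k+1}$ terms to track) and pairs naturally with the transitivity machinery, whereas yours pays for a $2^n$-term expansion but eliminates the auxiliary induction, yielding an explicit squisher landing directly in $\UP(\tensorpower^n, \tensorpower^{\leq k+1}, \tensorpower^n)$ --- arguably the more transparent construction if one cares about writing the squisher down.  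One tiny bookkeeping note: you write ``$n > k \geq |A|$'' where you could use the stronger hypothesis $n > k+1$; this changes nothing, since $n > k$ already forces $A \neq [n]$, which is all you need.
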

\begin{proof}
 By transitivity of squishing Observation \ref{transitivesquish} it suffices to show that $\tensorpower^k$ squishes $\tensorpower^n$ through $\tensorpower^{<n}$.  There is nothing to prove unless $n > k+1$.   In this case, form the vector $\nu =$
\be \nn
\underbrace{e_1 \hspace{-3pt} \otimes \hspace{-2pt} e_2 \hspace{-3pt} \otimes \hspace{-2pt} \cdots \hspace{-3pt} \otimes \hspace{-2pt} e_{n-k-1}}_{\mbox{\scalebox{.75}{$n-(k+1)$ factors}}} \otimes \underbrace{(e_{n-k} - e_{n-k-1}) \hspace{-3pt} \otimes \hspace{-3pt} (e_{n-k+1} - e_{n-k}) \hspace{-3pt} \otimes \hspace{-3pt} (e_{n-k+2} - e_{n-k+1}) \hspace{-3pt} \otimes \hspace{-3pt} \cdots \hspace{-3pt} \otimes \hspace{-3pt} (e_{n} - e_{n-1})}_{\mbox{\scalebox{.75}{$(k+1)$ factors}}}
\ee
which is an element of $\otimes^{n} \left( \mathbb{Q}[n] \right) \simeq \QFin([n],[n])$.  By inspection, this vector is congruent to the identity modulo the two-sided ideal generated by the non-bijective functions:
\bea \nn
\QFin([n], [n]) &\twoheadrightarrow& \mathbb{Q} \Bij_n \\ \nn
\nu &\mapsto& 1.
\eea
However, $\nu$ annihilates every function in $\Fin([k],[n])$ by Proposition \ref{missedfactor} since every such function misses at least one of the last $(k+1)$ tensor factors, which have zero coefficient-sum.  It follows that $1-\nu$ is the required squisher.
\end{proof}
\begin{lem}[Lower Squishing Lemma for $\Fin$]\label{lowersquisher}
The projective $\tensorpower^k$ squishes $\Theta^{k+1}$ through $\tensorpower^{\leq k}$.
\end{lem}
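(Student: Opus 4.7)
The plan is to construct an explicit squisher $\omega$ by adapting the strategy of the Upper Squishing Lemma. Using the Yoneda dictionary, an endomorphism of $\Theta^{k+1}$ factoring through $\tensorpower^{\leq k}$ corresponds, after extending along the inclusion $\Theta^{k+1} \hookrightarrow \tensorpower^{k+1}$ and its retract, to an element of the sandwich $\pi I_{\leq k} \pi \subset \QFin([k+1],[k+1])$, where $\pi = 1 - \varepsilon_{k+1}$ is the idempotent cutting out $\Theta^{k+1}$ and $I_{\leq k}$ denotes the two-sided ideal of non-bijective morphisms $[k+1] \to [k+1]$. Such an element is a squisher with respect to $V = \tensorpower^k$ precisely when its right action on $\QFin([k],[k+1])$ agrees with that of $\pi$ itself.

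First, I would recast the problem as finding $\nu \in \QFin([k+1],[k+1])$ satisfying
\begin{itemize}
\item[(A)] $\nu - 1 \in I_{\leq k}$, so that $\omega := \pi(1-\nu)\pi$ automatically lies in $\pi I_{\leq k} \pi$;
\item[(B)] $\pi \nu \pi$ lies in the right-annihilator of $\QFin([k],[k+1])$ inside $\QFin([k+1],[k+1])$, so that $g \omega = g \pi$ for every $g \in \QFin([k],[k+1])$.
\end{itemize}
This mirrors the Upper Squishing setup, with $\pi$ playing the role previously played by the identity and the sign-isotype slack replacing the coefficient-sum-zero slack exploited in Lemma \ref{uppersquisher}.

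Next, I would produce $\nu$ explicitly. The key algebraic input is that $\QFin([k],[k+1]) \simeq (\mathbb{Q}[k+1])^{\otimes k}$ contains the sign representation of $\Bij_{k+1}$ with multiplicity exactly one; hence $\pi$ kills only a one-dimensional subspace, leaving just enough room to reproduce the identity by a non-bijective operator. I would assemble $\nu$ from compositions $\iota s \in \QFin([k+1],[k+1])$ of the form $[k+1] \twoheadrightarrow [k] \hookrightarrow [k+1]$, weighted so that the $\pi$-sandwich exactly cancels the unique sign-isotypic contribution. As a sanity check, for $k=1$ this recipe yields $\nu = 1 - \tfrac{1}{2}(\fn{11} + \fn{22})$ and squisher $\omega = \tfrac{1}{2}(\fn{11} + \fn{22})$, a formula whose squishing property can be verified directly.

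The hardest step will be verifying condition (B) for general $k$. This amounts to a Proposition \ref{missedfactor}-style bookkeeping: every $g \in \QFin([k],[k+1])$ factors through an inclusion $[m] \hookrightarrow [k+1]$ with $m \leq k$, so its image omits at least one codomain element, and the non-bijective factors of $\nu$ are engineered to vanish against precisely those missed directions once flanked by $\pi$. Should the direct combinatorics prove unwieldy, I would fall back on the Schur--Weyl splitting $\Theta^{k+1} \simeq \bigoplus_{\lambda \neq (1^{k+1})} \Sp_\lambda \otimes \mathbb{P}_\lambda$: Observations \ref{sumsquish} and \ref{squishinnersum} reduce the problem to squishing each non-sign isotype projective $\mathbb{P}_\lambda$ separately, and each such $\lambda$ has at most $k$ parts, so the combinatorics of $\UP(\mathbb{P}_\lambda, \tensorpower^k)$ becomes explicit enough to build the squisher isotype by isotype.
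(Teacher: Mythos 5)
Your algebraic reformulation is sound, and your conditions (A) and (B) on $\nu$ are exactly what the squishing property demands (your $k=1$ computation even reproduces the paper's squisher $\tfrac{1}{2}(\fn{11}+\fn{22})$ on the nose). The reduction via Observations \ref{sumsquish} and \ref{squishinnersum} to individual isotype projectives is also a legitimate move. But the proposal stops short of the one step that does all the work for general $k$: you never actually produce an element that right-annihilates $\QFin([k],[k+1])$, only describe properties you would want of one. ``Weighted so that the $\pi$-sandwich exactly cancels the unique sign-isotypic contribution'' and ``engineered to vanish against precisely those missed directions once flanked by $\pi$'' is a wish, not a construction, and the multiplicity-one count does not by itself show that such a weighting exists.

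It also misallocates the mechanism. In the paper, the annihilation is \emph{not} accomplished by the $\pi$-sandwich. The paper exhibits a single explicit vector
\begin{equation*}
\mu = (e_1-e_2)\otimes(e_2-e_1)\otimes(e_3-e_2)\otimes\cdots\otimes(e_{k+1}-e_k) \in \QFin([k+1],[k+1]),
\end{equation*}
every tensor factor of which has coefficient-sum zero, so that Proposition \ref{missedfactor} gives $g\mu = 0$ for every $g \in \QFin([k],[k+1])$ unconditionally: any such $g$ misses at least one codomain element, and every factor of $\mu$ can absorb the miss. The role of $\pi$ (together with Artin--Wedderburn) is a separate normalization step, not the killing step: modulo non-bijections $\mu \equiv 1 + (1\,2)$, and since the only irreducible of $\Bij_{k+1}$ annihilating $1 + (1\,2)$ is the sign, the two-sided ideal it generates in $\mathbb{Q}\Bij_{k+1}$ contains $1-\varepsilon$; one then lifts this to some $\mu'$ in the two-sided ideal generated by $\mu$ (which therefore still right-annihilates $\QFin([k],[k+1])$), and $1-\varepsilon-\mu'$ is the squisher. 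Both of these ingredients---the coefficient-sum-zero vector and the Artin--Wedderburn passage from $1+(1\,2)$ to $1-\varepsilon$---are absent from your proposal, and your fallback via individual $\mathbb{P}_\lambda$'s reencounters the same gap, since you would still have to construct a squisher for each one. So the plan is pointed in the right direction and the reformulation is correct, but the core of the argument remains to be supplied.
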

\begin{proof}
When $k=0$, $\Theta^{k+1} = 0$ and there is nothing to prove.  Assuming $k \geq 1$, form the vector
\be \nn
\mu = (e_1 - e_2) \otimes (e_2 - e_1) \otimes (e_3 - e_2) \otimes (e_4 - e_3) \otimes (e_5 - e_4) \otimes \cdots \otimes (e_{k+1} - e_k),
\ee
where the pattern begins in earnest after the first tensor factor.  This vector is congruent to $1 + (1\;2) \in \mathbb{Q} \Bij_{k+1}$ modulo non-bijections.  The only representations of $\Bij_{k+1}$ sending $1 + (1\;2)$ to zero are multiples of the sign representation---indeed, any representation sending $(1\;2)$ to $-1$ must also send every other transposition to $-1$.  It follows by the Artin-Wedderburn theorem that $1 - \varepsilon \in \mathbb{Q} \Bij_{k+1}$ is in the two-sided ideal generated by $1 + (1\;2)$.  On the other hand, it is clear by Proposition \ref{missedfactor} that $\mu$ annihilates the entire vector space $\QFin([k],[k+1])$.  Any other vector in the two-sided ideal generated by $\mu$ will also have this property, and so we may find some such vector $\mu'$ projecting down to $1-\varepsilon$.  This time, $1-\varepsilon - \mu'$ is the required squisher.
\end{proof}
\subsubsection{Example squishers for $\Fin$}
Due to the importance of the squishing lemmas, we give the reader an idea of the squishers they produce.  When $k=2$ and $n=4$, the Upper Squishing Lemma produces the element
\be \nn
\fn{1123} - \fn{1124} - \fn{1133} + \fn{1134} - 
 \fn{1223} + \fn{1224} + \fn{1233}
\ee
which induces an identity map after application of the functor $\tensorpower^2$.  Concretely, any function from $[2]$ to $[4]$ is fixed under right multiplication by this element.  For example, 
\bea \nn
\left( \fn{14} \right) \left(\fn{1123} - \fn{1124} - \fn{1133} + \fn{1134} - 
 \fn{1223} + \fn{1224} + \fn{1233} \right) & = & \fn{13} - \fn{14} - \fn{13} + \fn{14} - 
 \fn{13} + \fn{14} + \fn{13} \\ \nn
 &=& \fn{14}.
\eea
The Lower Squishing Lemma produces messier vectors.  In the case $k=2$,
\bea \nn
&& -\frac{1}{6} \fn{111}-\frac{1}{4} \fn{112}+\frac{5}{12} \fn{113}+\frac{5}{12} \fn{121}+\frac{5}{12} \fn{122}-\frac{1}{4} \fn{131}  +\frac{5}{12} \fn{133}+\frac{1}{12} \fn{211} \\ \nn
&&\hspace{1in} +\frac{1}{12} \fn{212}-\frac{1}{4} \fn{221}-\frac{1}{6} \fn{222}+\frac{5}{12} \fn{223}+\frac{1}{12} \fn{232}-\frac{1}{4} \fn{233} +\frac{1}{12} \fn{311} \\ \nn
&&\hspace{1in}-\frac{1}{4} \fn{313}-\frac{1}{4} \fn{322}+\frac{5}{12} \fn{323}+\frac{1}{12} \fn{331}+\frac{1}{12} \fn{332}-\frac{1}{6} \fn{333}
\eea
acts by right multiplication on functions from $[2]$ to $[3]$ in the same way that $1-\varepsilon$ acts on functions from $[2]$ to $[3]$.  For example, $\fn{13}+\fn{31}$ is killed by $\varepsilon$ and so is fixed by this element.
\subsection{Squishing constrains skeleta}

\begin{prop}\label{doubleskconclusion}
Suppose $U$ squishes $S$ through $P$, with $S \in \fg$.  The skeletal injection 
\be \nn
\sk_P \sk_S U \hookrightarrow \sk_S U
\ee
 is actually an isomorphism.
\end{prop}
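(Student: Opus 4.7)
The plan is to show the nontrivial containment $\sk_S U \subseteq \sk_P \sk_S U$; the reverse inclusion is automatic from the definition. Setting $V := \sk_S U$, the space $V[n]$ is spanned by values $\phi(s)$ with $\phi : S \to U$ and $s \in S[n]$, since $V$ is by definition the image of the evaluation map $\Vect^{\Fin}(S,U) \otimes S \to U$. So it suffices to show each such $\phi(s)$ already lies in $\sk_P V$.

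To exploit the squishing hypothesis, I would first unpack its meaning: there is a finite-dimensional vector space $Z$ and maps $\alpha : S \to Z \otimes P$, $\beta : Z \otimes P \to S$ such that $\phi = \alpha \beta \phi$ for every $\phi : S \to U$. Choosing a basis $z_1, \ldots, z_d$ of $Z$ yields a decomposition $\alpha = \sum_k z_k \otimes \alpha_k$ with $\alpha_k : S \to P$, together with maps $\beta_k : P \to S$ defined by $p \mapsto \beta(z_k \otimes p)$. The squisher identity then becomes
\be \nn
\phi \; = \; \sum_k \alpha_k \gamma_k, \qquad \gamma_k \; := \; \beta_k \phi \; : \; P \xrightarrow{\beta_k} S \xrightarrow{\phi} U.
\ee

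The decisive observation is that $\gamma_k$, by construction, factors through $\phi$, so $\im(\gamma_k) \subseteq \im(\phi) \subseteq V$. Hence each $\gamma_k$ is actually a map $P \to V$, and by the definition of the $P$-skeleton, $\gamma_k(P) \subseteq \sk_P V$. Substituting back,
\be \nn
\phi(s) \; = \; \sum_k \gamma_k(\alpha_k(s)) \; \in \; (\sk_P V)[n],
\ee
which closes the argument. The only real subtlety—essentially the whole content of the proof—is recognizing that the ``right-hand factor'' $\gamma_k = \beta_k \phi$ in the squisher decomposition of $\phi$ is itself built from $\phi$, so its image is automatically confined to $V$ rather than escaping into the ambient $U$. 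Once this is noticed, the bookkeeping around the auxiliary tensor factor $Z$ is routine.
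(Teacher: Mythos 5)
Your proof is correct, and it is a genuinely different route from the paper's. The paper first invokes the Extensions Squish Lemma (\ref{squishext}) to transfer the squisher from $U$ to the subobject $\sk_S U$, then shows the induced map $\Vect^{\Fin}(S, \sk_P \sk_S U) \to \Vect^{\Fin}(S, \sk_S U)$ is an isomorphism via a commutative diagram using Observation~\ref{calledA}, and finally pushes this hom-set isomorphism back to a surjection of representations by naturality of the tensor-hom adjunction from Definition~\ref{tensorhom}. You instead avoid both of those tools: by decomposing the squisher componentwise as $\alpha = \sum_k z_k \otimes \alpha_k$ and $\beta_k(p) = \beta(z_k \otimes p)$, you rewrite each $\phi$ as $\sum_k \alpha_k \gamma_k$ with $\gamma_k = \beta_k \phi : P \to U$, and the single decisive observation is that $\gamma_k$ factors through $\phi$ and therefore has image already inside $V = \sk_S U$, so $\gamma_k$ is a map $P \to V$ whose image lies in $\sk_P V$ by definition. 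This closes the containment $\sk_S U \subseteq \sk_P \sk_S U$ pointwise on objects, with no need to transfer the squisher to a subobject or to appeal to the adjunction. What you lose is that the paper's diagram-chasing style is carried through the surrounding arguments (Propositions~\ref{doubleskhypothesis} and~\ref{intersectlem} are written in the same register), so your proof sits a bit apart stylistically; what you gain is a shorter, more self-contained argument that exposes exactly where the squisher is spent and does not even nominally invoke projectivity of $S$ (which the cited direction of Lemma~\ref{squishext} happens not to require, but which is part of that lemma's stated hypothesis).
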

\begin{proof}
By Lemma \ref{squishext}, $\sk_S U$ squishes $S$ through $P$ since it is subobject of $U$.  It follows that there exists a finite-dimensional vector space $Z$ and maps
\be \nn
S \overset{\alpha}{\longrightarrow} Z \otimes P \overset{\beta}{\longrightarrow} S
\ee
with $\UP(\alpha \beta, \sk_S U) = 1$.  For any map $f: S \longrightarrow \sk_S U$, build the diagram
\be \nn
\begin{xy}
(20,30)*+{S}="b"; (60,30)*+{Z \otimes P}="c"; (100,30)*+{\sk_P \sk_S U}="d";
(60,15)*+{S}="cc";
(20,0)*+{\sk_S U}="g"; (60,0)*+{\sk_S U}="h"; (100,0)*+{\sk_S U}="i";
{\ar^{\alpha} "b"; "c"};
{\ar "c"; "d"};
{\ar_1 "g"; "h"};
{\ar_1 "h"; "i"};
{\ar_f "b"; "g"};
{\ar_{\beta} "c"; "cc"};
{\ar_f "cc"; "h"};
{\ar "d"; "i"};
\end{xy}
\ee
where the first square commutes since $\alpha \beta$ is a squisher and the second square is a consequence of Observation \ref{calledA}.  We see that any $f$ factors through the skeletal inclusion $\sk_P \sk_S U \hookrightarrow \sk_S  U $, so the induced map
\be \nn
\UP(S, \sk_P \sk_S  U ) \overset{\sim}{\longrightarrow} \UP(S, \sk_S  U )
\ee
is an isomorphism.  Applying the functor $\Vect\left(\UP(S,  U ), -\right)$, we obtain
\be \nn
\Vect\left(\UP(S,  U ), \UP(S, \sk_P \sk_S  U )\right) \overset{\sim}{\longrightarrow} \Vect\left(\UP(S,  U ),\UP(S, \sk_S  U )\right).
\ee
Naturality of the tensor-hom adjunction from Definition \ref{tensorhom} gives a corresponding isomorphism
\be \nn
\UP\left(\UP(S,  U ) \otimes S, \sk_P \sk_S  U \right) \overset{\sim}{\longrightarrow} \UP\left(\UP(S,  U ) \otimes S, \sk_S  U \right).
\ee
The vector space on the right contains the surjection $\UP(S,  U ) \otimes S \longrightarrow \sk_S  U $, and so there exists a corresponding vector on the right
\be \nn
\UP(S,  U ) \otimes S \longrightarrow \sk_P \sk_S  U 
\ee
with the property that post-composing with the skeletal inclusion gives the canonical surjection onto the $S$-skeleton of $ U $:
\be \nn
\begin{xy}
(0,10)*+{\UP(S,  U ) \otimes S}="a"; (25,0)*+{\sk_P \sk_S  U }="b"; (50,10)*+{\sk_S  U }="c"; 
{\ar "a"; "b"};
{\ar "b"; "c"};
{\ar "a"; "c"};
\end{xy}
\ee
The composite map is surjective, so skeletal inclusion is also surjective, and hence an isomorphism.
\end{proof}

\begin{prop}\label{doubleskhypothesis}
Suppose the natural injection $\sk_P \sk_S U \hookrightarrow \sk_S U$ is an isomorphism.  Any composite $S \longrightarrow U \longrightarrow V$ factors through the natural injection $\sk_P V \hookrightarrow V$:
\be \nn
\begin{xy}
(20,20)*+{S}="b"; (50,20)*+{\sk_P V}="c";
(20,0)*+{U}="g"; (50,0)*+{V.}="h";
{\ar "b"; "c"};
{\ar "g"; "h"};
{\ar "b"; "g"};
{\ar "c"; "h"};
\end{xy}
\ee
\end{prop}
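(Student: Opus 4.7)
Given a composite $S \xrightarrow{\phi} U \xrightarrow{\psi} V$, the plan is to build the desired factorization through $\sk_P V \hookrightarrow V$ as a chain
\[
S \longrightarrow \sk_S U \;=\; \sk_P \sk_S U \longrightarrow \sk_P V \hookrightarrow V,
\]
assembled from three ingredients: (i) the universal property of the skeletal inclusion $\sk_S U \hookrightarrow U$, (ii) the given hypothesis, and (iii) Observation~\ref{calledA}.

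First, I would verify that $\phi : S \to U$ factors through $\sk_S U$. The $S$-skeleton is, by definition, the image of the evaluation map $ev : \UP(S,U) \otimes S \to U$. The one-dimensional subspace $\mathbb{Q}\phi \subseteq \UP(S,U)$ picks out an inclusion $S \simeq \mathbb{Q}\phi \otimes S \hookrightarrow \UP(S,U) \otimes S$; composing with $ev$ recovers $\phi$, by the tensor-hom adjunction of Definition~\ref{tensorhom}. Hence $\phi$ lands in $\im(ev) = \sk_S U$, providing the first arrow.

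Next, the hypothesis $\sk_P \sk_S U \xrightarrow{\sim} \sk_S U$ lets us rewrite the target of the factored map as $\sk_P \sk_S U$. Now apply Observation~\ref{calledA} with ``$U$'' in that observation replaced by $\sk_S U$: the induced map from $\UP(\sk_P \sk_S U, \sk_P V)$ to $\UP(\sk_P \sk_S U, V)$ is an isomorphism, so the restricted composite $\sk_P \sk_S U = \sk_S U \xrightarrow{\psi|} V$ lifts uniquely through $\sk_P V \hookrightarrow V$. Splicing this lift on top of the first arrow produces the commutative square in the statement.

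There is essentially no obstacle: the argument is a three-step diagram chase, with each step invoking a result already established earlier in the paper. The only point requiring a brief verification is the very first sentence, that a single map $S \to U$ factors through $\sk_S U$, which one sees immediately from the universal property (alternatively, from the fact that $\UP(S, U) \otimes S \twoheadrightarrow \sk_S U$ has the $\phi$-th summand mapping to $U$ precisely via $\phi$).
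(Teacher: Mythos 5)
Your proposal is correct and follows essentially the same route as the paper's proof. The paper factors $S \to U$ through $\sk_S U$ by first noting $S \xrightarrow{\sim} \sk_S S$ and then invoking Observation~\ref{calledA}, whereas you argue this step directly from the definition of the evaluation map; both then identify $\sk_S U$ with $\sk_P \sk_S U$ by hypothesis and apply Observation~\ref{calledA} to land in $\sk_P V$.
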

\begin{proof}
Build the diagram
\be \nn
\begin{xy}
(5,20)*+{S}="a"; (30,20)*+{\sk_S S}="b"; (60,20)*+{\sk_S U}="c"; (90,20)*+{\sk_P \sk_S U}="e"; (120,20)*+{\sk_P V}="ee"; 
(5,0)*+{U}="f"; (30,0)*+{U}="g"; (60,0)*+{U}="h"; (90,0)*+{V}="j"; (120,0)*+{V,}="jj";
{\ar^{\sim} "a"; "b"};
{\ar "b"; "c"};
{\ar_{\sim} "e"; "c"};
{\ar "e"; "ee"};
{\ar_1 "f"; "g"};
{\ar_1 "g"; "h"};
{\ar_1 "j"; "jj"};
{\ar "a"; "f"};
{\ar "b"; "g"};
{\ar "c"; "h"};
{\ar "h"; "j"};
{\ar "e"; "j"};
{\ar "ee"; "jj"};
\end{xy}
\ee
where the second and last commuting squares use Observation \ref{calledA}.
\end{proof}

\begin{lem}\label{calledCombo}
If $U$ squishes $S$ through $P$ then any composite $S \longrightarrow U \longrightarrow V$ factors
\be \nn
\begin{xy}
(20,20)*+{S}="b"; (50,20)*+{\sk_P V}="c";
(20,0)*+{U}="g"; (50,0)*+{V,}="h";
{\ar "b"; "c"};
{\ar "g"; "h"};
{\ar "b"; "g"};
{\ar "c"; "h"};
\end{xy}
\ee
where the map on the right is the skeletal inclusion.
\end{lem}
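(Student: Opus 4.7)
The plan is straightforward: this lemma is the composite of Propositions \ref{doubleskconclusion} and \ref{doubleskhypothesis}, and its proof should just chain them together. First I would invoke Proposition \ref{doubleskconclusion} with the given data: since $U$ squishes $S$ through $P$, the skeletal inclusion
\[
\sk_P \sk_S U \hookrightarrow \sk_S U
\]
is an isomorphism. Then Proposition \ref{doubleskhypothesis} applies verbatim with this $U$, $S$, $P$, and the given $V$, producing the desired factorization through $\sk_P V$.

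The only subtle point is the mild finiteness hypothesis $S \in \fg$ that Proposition \ref{doubleskconclusion} requires in order to apply the tensor-hom adjunction of Definition \ref{tensorhom}; I would note that this is implicit in the intended use of the lemma, since squishing already requires a map $S \to Z \otimes P$ with $Z$ finite-dimensional, and in every application in the paper $S$ is a representable or Schur projective. Assuming this, there is nothing else to do: the triangle
\[
\begin{xy}
(20,20)*+{S}="b"; (50,20)*+{\sk_P V}="c";
(20,0)*+{U}="g"; (50,0)*+{V}="h";
{\ar "b"; "c"};
{\ar "g"; "h"};
{\ar "b"; "g"};
{\ar "c"; "h"};
\end{xy}
\]
produced by Proposition \ref{doubleskhypothesis} is exactly the diagram demanded by the lemma.

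Since both propositions have already been proved, there is no real obstacle here; the only thing requiring care is making sure the $S$ in the hypothesis of Proposition \ref{doubleskhypothesis} is literally the same $S$ through which $U$ squishes in Proposition \ref{doubleskconclusion}, so that the hypothesis "$\sk_P \sk_S U \hookrightarrow \sk_S U$ is an isomorphism" transfers with no modification. This matches exactly, so the proof is a one-line chain of references.
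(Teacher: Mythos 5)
Your proof is exactly the paper's: the paper's proof of Lemma \ref{calledCombo} reads in full ``Combine Propositions \ref{doubleskconclusion} and \ref{doubleskhypothesis}.'' Your observation about the implicit $S \in \fg$ hypothesis needed for Proposition \ref{doubleskconclusion} is a correct and slightly more careful reading than the paper itself spells out, but it does not change the argument.
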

\begin{proof}
Combine Propositions \ref{doubleskconclusion} and \ref{doubleskhypothesis}.
\end{proof}
\begin{prop} \label{intersectlem}
Let $U \hookrightarrow V$ be an injection, and suppose $P$ squishes $S$ through $P$, with $S$ finitely generated.  We have an equality
\be \nn
\UP\left(S, (\sk_P V) \cap U\right) = \UP(S, \sk_P U).
\ee
\end{prop}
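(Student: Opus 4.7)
The plan is to prove the two inclusions between $\UP(S, \sk_P U)$ and $\UP(S, (\sk_P V) \cap U)$, both viewed as subspaces of $\UP(S, U)$.  One direction, $\UP(S, \sk_P U) \subseteq \UP(S, (\sk_P V) \cap U)$, follows directly from Observation \ref{calledB}: the injection $U \hookrightarrow V$ induces $\sk_P U \hookrightarrow \sk_P V$, and this combined with the tautological $\sk_P U \subseteq U$ yields $\sk_P U \subseteq (\sk_P V) \cap U$, from which the hom-set containment is immediate.

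For the reverse containment, fix $f : S \longrightarrow (\sk_P V) \cap U$.  First I would upgrade the hypothesis using Observation \ref{skelsquish}, which guarantees that $\sk_P V$ itself squishes $S$ through $P$.  Choose witnessing squishing maps $\alpha : S \to Z \otimes P$ and $\beta : Z \otimes P \to S$, so that $\alpha \beta g = g$ for every $g \in \UP(S, \sk_P V)$.  Applying this to the composite $S \xrightarrow{f} (\sk_P V) \cap U \hookrightarrow \sk_P V$ gives $\alpha \beta f = f$ as maps $S \to \sk_P V$; composing further into $V$ and then invoking the injectivity of $U \hookrightarrow V$, this same identity $\alpha \beta f = f$ holds interpreted as an equality of maps $S \to U$.

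Finally, I would observe that $\beta f : Z \otimes P \longrightarrow U$ factors through $\sk_P U$.  Since $Z$ is finite-dimensional, $Z \otimes P$ is a finite direct sum of copies of $P$, so $\beta f$ decomposes into finitely many maps $P \longrightarrow U$, each of which tautologically lands inside $\sk_P U$ by the very definition of $\sk_P U$ as the image of the evaluation map $\UP(P, U) \otimes P \to U$.  Writing $\beta f = \iota h$ for some $h : Z \otimes P \to \sk_P U$ with $\iota : \sk_P U \hookrightarrow U$, we conclude that $f = \alpha \beta f = \iota(\alpha h)$ factors as $S \xrightarrow{\alpha h} \sk_P U \hookrightarrow U$, placing $f$ in $\UP(S, \sk_P U)$.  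The only delicate point is bookkeeping: the squishing identity is native to $\UP(S, \sk_P V)$ while the desired factorization lives in $\UP(S, U)$, and it is the injectivity of $U \hookrightarrow V$ that allows one to shuttle between the two contexts.
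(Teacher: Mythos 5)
Your proof is correct, and the easy direction (via Observation \ref{calledB}) is exactly the paper's. For the substantive direction, however, you take a genuinely more elementary route. The paper's proof sets up the $L$-shaped diagram with the evaluation map $\UP(S, (\sk_P V) \cap U) \otimes S \to (\sk_P V) \cap U \hookrightarrow U$, invokes Observation \ref{skelsquish}, Lemma \ref{squishext}, Observation \ref{sumsquish}, and Lemma \ref{calledCombo} to complete it to a square, then applies $\UP(S,-)$ and uses that the left leg is split epic (counit--unit identity) and the right and bottom legs remain monic (left exactness) to extract the containment by epi--mono factorization. You instead fix a single $f : S \to (\sk_P V)\cap U$, write the squisher of $\sk_P V$ as $S \xrightarrow{\alpha} Z \otimes P \xrightarrow{\beta} S$, and observe directly that $\beta f : Z \otimes P \to U$ corestricts to $\sk_P U$ componentwise --- which inlines the content of Lemma \ref{calledCombo} for this specific situation rather than invoking the chain of results (\ref{doubleskconclusion}, \ref{doubleskhypothesis}, \ref{calledCombo}) the paper built up. The payoff of your approach is transparency: no tensor--hom adjunction bookkeeping, no appeal to Lemma \ref{squishext} (whose statement carries a projectivity hypothesis on $S$ that the proposition does not, though the subobject case used here does not actually need it). The paper's approach is more uniform with the rest of its toolkit and recycles Lemma \ref{calledCombo}, which it needs elsewhere anyway. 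One small clarification to your bookkeeping remark: the injectivity doing the work is really that of $(\sk_P V)\cap U \hookrightarrow \sk_P V$ (both $f$ and $\alpha\beta f$ factor through it, so agreement in $\UP(S,\sk_P V)$ forces agreement in $\UP(S,(\sk_P V)\cap U)$, hence in $\UP(S,U)$); the injectivity of $U \hookrightarrow V$ is only what makes the intersection meaningful in the first place.
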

\begin{proof}
The $L$-shaped diagram
\be \nn
\begin{xy}
(20,20)*+{\UP(S, (\sk_P V) \cap U) \otimes S}="b"; (60,20)*+{\phantom{\sk_P U}}="c";
(20,0)*+{(\sk_P V) \cap U}="g"; (65,0)*+{U}="h";
{\ar "g"; "h"};
{\ar_{ev} "b"; "g"};
\end{xy}
\ee
extends to a commuting square by Lemma \ref{squishext}, Observation \ref{sumsquish}, and Lemma \ref{calledCombo}:
\be \nn
\begin{xy}
(20,20)*+{\UP(S, (\sk_P V) \cap U) \otimes S}="b"; (65,20)*+{\sk_P U}="c";
(20,0)*+{(\sk_P V) \cap U}="g"; (65,0)*+{U.}="h";
{\ar "b"; "c"};
{\ar "g"; "h"};
{\ar_{ev} "b"; "g"};
{\ar "c"; "h"};
\end{xy}
\ee
Applying the functor $\UP(S,-)$,
\be \nn
\begin{xy}
(20,20)*+{\UP\left(S, \UP(S, (\sk_P V) \cap U) \otimes S\right)}="b"; (65,20)*+{\UP(S, \sk_P U)}="c";
(20,0)*+{\UP(S, (\sk_P V) \cap U)}="g"; (65,0)*+{\UP(S, U).}="h";
{\ar "b"; "c"};
{\ar "g"; "h"};
{\ar_{\UP(S,ev)} "b"; "g"};
{\ar "c"; "h"};
\end{xy}
\ee
The left map is (split) epic by the counit-unit equations; the right and bottom maps remain injective by left-exactness of $\UP(S,-)$.  Epi-mono factorization gives a lift
\be \nn
\begin{xy}
(20,20)*+{\UP\left(S, \UP(S, (\sk_P V) \cap U) \otimes S\right)}="b"; (65,20)*+{\UP(S, \sk_P U)}="c";
(20,0)*+{\UP(S, (\sk_P V) \cap U)}="g"; (65,0)*+{\UP(S, U).}="h";
{\ar "b"; "c"};
{\ar "g"; "h"};
{\ar_{\UP(S,ev)} "b"; "g"};
{\ar "c"; "h"};
{\ar "g"; "c"};
\end{xy}
\ee
which is forced to be an inclusion since the bottom map is.  We have established the forward inclusion $\UP(S, (\sk_P V) \cap U) \subseteq \UP(S, \sk_P U)$.  In the other direction,
\be \nn
\UP(S, (\sk_P V) \cap U) = \UP(S, \sk_P V) \cap \UP(S, U)
\ee
by left exactness of $\UP(S, -)$, so it suffices to prove the inclusions
\be \nn
\UP(S, \sk_P U) \subseteq \UP(S, \sk_P V) \hspace{.5in} \mbox{and} \hspace{.5in} \UP(S, \sk_P U) \subseteq \UP(S, U).
\ee
Observation \ref{calledB} gives that the induced map $\sk_P U \hookrightarrow \sk_P V$ is an injection, after which both statements follow from the left exactness of $\UP(S, -)$.
\end{proof}
\subsection{Homology}
Following \cite{FIMod}, we define a notion of zeroth homology.
\begin{defn} \label{zerothhomology}
Given a representation $V : \Fin \longrightarrow \Vect$, its \textbf{zeroth homology in degree k} is defined
\bea \nn
H_0V[k] &=&  \left(V \Big / \sk_{< k} V  \right) [k]  \\
& = & \coker \left(\Res_{\Bij_k}^{\Fin} \Lan_{\Bij_{< k}}^{\Fin} \Res_{\Bij_{< k}}^{\Fin} V\overset{\scalebox{.5}{\hspace{-15pt}$\Res \epsilon$}}{\longrightarrow} \Res_{\Bij_k}^{\Fin} V\right), \label{degkhom}
\eea
where the map is the restriction of the counit
\be \nn
\Res_{\Bij_k}^{\Fin} \epsilon_{\Bij_{<k}}^{\Fin}.
\ee
The two definitions are equivalent by Proposition \ref{skcounit}.  We define the \textbf{zeroth homology functor}
\bea \nn
H_0 : \Bij & \longrightarrow & \Vect \\ \nn
\left[k\right] & \longmapsto & H_0V[k]
\eea
using the universal property of the coproduct $\sqcup \Bij_k \simeq \Bij$.  The \textbf{homology map h} is defined to be the natural surjection
\be \nn
\mathbb{R} V \overset{h}{\longrightarrow} H_0V \longrightarrow 0
\ee
attached to the various cokernels from (\ref{degkhom}).
\end{defn}
\begin{obs}\label{homobs}
The homology map $h$ is natural in $V$.  The functor $H_0$ is right exact and satisfies
\be \nn
\Res^{\Bij}_{\Bij_0} H_0 = \Res^{\Bij}_{\Bij_0}.
\ee
\end{obs}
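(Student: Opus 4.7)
The plan is to verify each of the three claims separately; all three follow directly from the definition of $H_0 V[k]$ as the cokernel of the restricted counit
\be \nn
\Res_{\Bij_k}^{\Fin} \epsilon_{\Bij_{<k}}^{\Fin} : \Res_{\Bij_k}^{\Fin} \Lan_{\Bij_{<k}}^{\Fin} \Res_{\Bij_{<k}}^{\Fin} V \longrightarrow \Res_{\Bij_k}^{\Fin} V
\ee
together with standard exactness properties of restriction and left Kan extension. For naturality of $h$, observe that the displayed map above is a natural transformation of functors in $V$, since $\epsilon$ is natural and the restrictions and Kan extension are themselves functors. Because the cokernel functor carries natural transformations to natural transformations, $h[k]$ is natural in $V$. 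Assembling over $k$ via the universal property of the coproduct $\Bij = \bigsqcup_k \Bij_k$ then gives naturality of the whole map $h : \mathbb{R} V \longrightarrow H_0 V$.

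For right exactness of $H_0$, I would first note that the restriction functors $\Res_{\Bij_k}^{\Fin}$ and $\Res_{\Bij_{<k}}^{\Fin}$ are exact (evaluation at finite sets is exact), and $\Lan_{\Bij_{<k}}^{\Fin}$ is right exact as a left adjoint. Consequently both the source and target of the displayed map are right-exact functors of $V$. Given a short exact sequence
\be \nn
0 \longrightarrow A \longrightarrow B \longrightarrow C \longrightarrow 0
\ee
in $\Vect^{\Fin}$, applying these two right-exact functors produces a commuting ladder with two right-exact rows joined by $\epsilon$; a routine snake-lemma / $3 \times 3$ chase on the cokernel row yields exactness of $H_0 A[k] \longrightarrow H_0 B[k] \longrightarrow H_0 C[k] \longrightarrow 0$. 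Since this holds for every $k$ and $\Vect^{\Bij}$ decomposes as a product of the $\Vect^{\Bij_k}$, this shows $H_0 : \Vect^{\Fin} \longrightarrow \Vect^{\Bij}$ is right exact.

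Finally, for $\Res^{\Bij}_{\Bij_0} H_0 = \Res^{\Bij}_{\Bij_0}$: when $k=0$ the subcategory $\Bij_{<0}$ is empty, so $\Res_{\Bij_{<0}}^{\Fin} V = 0$, and hence $\Lan_{\Bij_{<0}}^{\Fin} \Res_{\Bij_{<0}}^{\Fin} V = 0$ as well. (Equivalently, by Proposition \ref{skcounit}, $\sk_{<0} V = 0$ since $\tensorpower^{<0}$ is the empty direct sum.) Thus $H_0 V[0] = V[0]/0 = V[0]$, and this identification is manifestly natural in $V$ and automatically compatible with the $\Bij_0$-action (which is trivial on the one object $[0]$). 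The only step demanding real attention is the snake-lemma chase in the right-exactness argument, which is entirely standard; the remaining parts are bookkeeping at the level of definitions.
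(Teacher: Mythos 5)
The paper states this as an Observation with no proof attached, treating all three claims as routine. Your argument is correct and is the standard one the author evidently has in mind: naturality of $h$ follows from functoriality of $\coker$ applied to the natural transformation $\Res_{\Bij_k}^{\Fin}\epsilon_{\Bij_{<k}}^{\Fin}$; right exactness of $H_0$ holds because it is the cokernel of a natural transformation from the right-exact functor $\Res_{\Bij_k}^{\Fin}\Lan_{\Bij_{<k}}^{\Fin}\Res_{\Bij_{<k}}^{\Fin}$ to the exact functor $\Res_{\Bij_k}^{\Fin}$; and the $k=0$ identification holds because $\Bij_{<0}$ is the empty category (equivalently $\tensorpower^{<0}=0$), forcing $\sk_{<0}V=0$ and hence $H_0V[0]=V[0]$.
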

\begin{rmk}[Higher homology]
Since $H_0$ is right exact, is has left derived ``higher homology'' functors.  Although higher homology plays no role in the proofs of this paper, it may nevertheless be valuable to think of $H_0 V$ as ``generators of $V$'' and higher homology as ``syzygies of $V$.''
\end{rmk}
\begin{prop}[Zeroth homology and squishing] \label{killhomology}
Let $\mathbb{P} = \Lan_{\Bij_{k}}^{\Fin} W$ be a Schur projective of (pure) degree $k$, and suppose some representation $V$ squishes $\mathbb{P}$ through $\tensorpower^{<k}$.  Then 
\be \nn
\Vect^{\Bij_k}( W , \;  H_0 V[k]) = 0.
\ee
\end{prop}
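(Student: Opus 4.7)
The plan is to use the adjunction $\mathbb{S} \dashv \mathbb{R}$ from Definition \ref{SchurRS} to reformulate the target group as an $\Fin$-Hom, then kill it by combining projectivity of $\mathbb{P}$ with the squishing machinery from Lemma \ref{calledCombo}.

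First I would rewrite the target group. Using $H_0 V[k] = (V/\sk_{<k} V)[k]$ from Definition \ref{zerothhomology} together with the fact that $W$ is supported purely in degree $k$ (so that a $\Bij$-natural transformation out of $W$ is exactly a $\Bij_k$-equivariant map on the degree-$k$ piece), and then the defining adjunction $\mathbb{S} \dashv \mathbb{R}$, I get
$$\Vect^{\Bij_k}(W,\, H_0 V[k]) \;\simeq\; \Vect^{\Bij}(W,\, \mathbb{R}(V/\sk_{<k} V)) \;\simeq\; \Vect^{\Fin}(\mathbb{P},\, V/\sk_{<k} V).$$
So the statement reduces to showing that every map of $\Fin$-representations $\mathbb{P} \to V/\sk_{<k} V$ is zero.

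Second, since $\mathbb{P}$ is a direct sum of isotype projectives it is projective in $\Vect^{\Fin}$ (each $\mathbb{P}_\lambda = \langle c_\lambda \rangle$ is a summand of a representable by Example \ref{sumexample}). Hence the surjection $V \twoheadrightarrow V/\sk_{<k} V$ induces a surjection on $\Vect^{\Fin}(\mathbb{P},\,-)$, so it suffices to prove that every map $\mathbb{P} \to V$ becomes zero after post-composing with the quotient, i.e., that every such map factors through the skeletal inclusion $\sk_{<k} V \hookrightarrow V$.

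Third, I would apply Lemma \ref{calledCombo} with $S = \mathbb{P}$, $P = \tensorpower^{<k}$, $U = V$, and the identity on $V$ as the second arrow. The hypothesis that $V$ squishes $\mathbb{P}$ through $\tensorpower^{<k}$ is precisely what the lemma requires, and its conclusion is exactly the desired factorization $\mathbb{P} \to \sk_{<k} V \hookrightarrow V$. Composing with the quotient onto $V/\sk_{<k} V$ then yields zero, which under the adjunction chain above gives the vanishing of $\Vect^{\Bij_k}(W, H_0 V[k])$. The argument is largely bookkeeping — the only substantive step is recognizing that the squishing hypothesis is perfectly shaped to feed into Lemma \ref{calledCombo}, so I do not foresee a significant obstacle beyond carefully tracking which piece is $W$, $W[k]$, or $\mathbb{P}$ in each identification.
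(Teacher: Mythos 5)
Your proof is correct and follows essentially the same route as the paper's: both reformulate the target group via the $\mathbb{S} \dashv \mathbb{R}$ adjunction and then invoke Lemma \ref{calledCombo} (with $U=V$, $S=\mathbb{P}$, $P=\tensorpower^{<k}$, and the identity as the second map) to force any map $\mathbb{P}\to V$ into $\sk_{<k}V$. The only cosmetic difference is bookkeeping: the paper applies the exact functor $\Vect^{\Bij_k}(W,-)$ to the sequence $0 \to \sk_{<k}V[k] \to V[k] \to H_0V[k] \to 0$ and shows the first inclusion becomes onto, while you pass directly to $\Vect^{\Fin}(\mathbb{P}, V/\sk_{<k}V)$ and use projectivity of $\mathbb{P}$ to see that every such map lifts to $V$ and then dies in the quotient.
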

\begin{proof}
We have a short exact sequence
\be \nn
0 \longrightarrow \sk_{< k} V[k] \longrightarrow V[k] \longrightarrow H_0 V[k] \longrightarrow 0.
\ee
Applying the exact functor $\Vect^{\Bij_k}(W, -)$:
\be \nn
0 \longrightarrow \Vect^{\Bij_k}\left(W,\; \sk_{< k} V[k] \right) \longrightarrow \Vect^{\Bij_k}\left(W,\; V[k]\right) \longrightarrow \Vect^{\Bij_k}\left(W, \; H_0 V[k] \right) \longrightarrow 0.
\ee
In what follows, let $M = \Vect^{\Bij_k}\left(W, \; H_0 V[k] \right)$; we wish to show $M$ is zero.  Let $W' \in \Vect^{\Bij}$ be the functor sending $[k]$ to $W$ but other sets to zero.  We have
\be \nn
0 \longrightarrow \Vect^{\Bij}\left(W',\; \mathbb{R} \, \sk_{< k} V \right) \longrightarrow \Vect^{\Bij}\left(W',\; \mathbb{R}V\right) \longrightarrow M \longrightarrow 0.
\ee
The adjunction $\mathbb{S} \dashv \mathbb{R}$ gives
\be \nn
0 \longrightarrow \UP(\mathbb{S} W', \sk_k V) \longrightarrow\UP(\mathbb{S} W', V) \longrightarrow M \longrightarrow 0.
\ee
By Observation \ref{sumsquish}, $V$ squishes $\mathbb{S} W'$ through $\tensorpower^{<k}$ since $\mathbb{S} W'$ is a direct summand of $\mathbb{S} W$.  By Lemma \ref{calledCombo}, any map from $\mathbb{S} W'$ to $V$ lands inside the $k$-skeleton.  It follows that $M$ vanishes and the claim is proved.
\end{proof}
\begin{rmk}
The next few intermediate results Proposition \ref{homreflectssurjections}, Proposition \ref{schurhomology}, Proposition \ref{schurhomologyindegree}, and Lemma \ref{goodcover} are analogous to ones found in \cite{FIMod}, where Church, Ellenberg, and Farb give essentially one-line proofs.  The results are no harder for $\Fin$-representations, but our formalism causes the proofs to swell a bit.  Nevertheless, we maintain our more categorical approach in these proofs, both for self-consistency, and with the understanding that a more formal style sometimes pays dividends later on.
\end{rmk}
\begin{prop}\label{homreflectssurjections}
Any map $V \longrightarrow W$ inducing a surjection
\be \nn
H_0 V \longrightarrow H_0 W \longrightarrow 0
\ee
is itself a surjection.
\end{prop}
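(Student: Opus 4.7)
The plan is to reduce to a cokernel statement and then analyze the minimal degree in which a nonzero $\Fin$-representation can have a generator. Let $C = \coker(V \longrightarrow W)$; we want to show $C=0$.

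First, I would invoke Observation \ref{homobs}: since $H_0$ is right exact, the right-exact sequence $V \longrightarrow W \longrightarrow C \longrightarrow 0$ yields a right-exact sequence
\be \nn
H_0 V \longrightarrow H_0 W \longrightarrow H_0 C \longrightarrow 0.
\ee
By hypothesis the first arrow is surjective, so $H_0 C = 0$. The proposition therefore reduces to the claim that a representation $C$ with $H_0 C = 0$ must vanish.

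For the reduced claim, I would argue by contradiction at the minimal nonzero degree. Suppose $C \neq 0$, and let $n$ be minimal with $C[n] \neq 0$, so that $C[k] = 0$ for every $k < n$. By the Yoneda isomorphism (\ref{yoneda}) we have $\Vect^{\Fin}(\tensorpower^k, C) \simeq C[k] = 0$ for each $k < n$, and so $\Vect^{\Fin}(\tensorpower^{<n}, C) = 0$. By the definition of skeleta (Definition \ref{skel}), $\sk_{<n} C$ is the image of the evaluation map $\Vect^{\Fin}(\tensorpower^{<n}, C) \otimes \tensorpower^{<n} \longrightarrow C$, which must therefore be zero. Consequently
\be \nn
H_0 C [n] = \left( C / \sk_{<n} C \right)[n] = C[n] \neq 0,
\ee
contradicting $H_0 C = 0$.

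There is no substantial obstacle here; the work consists entirely in threading together the right exactness of $H_0$ (Observation \ref{homobs}) with the Yoneda identification $\Vect^{\Fin}(\tensorpower^k,-) \simeq (-)[k]$ and the description of $\sk_{<n}$ as the image of an evaluation map. If anything, the only subtle point is making sure to interpret "minimal degree in which $C$ is nonzero" correctly—this is legitimate because the dimension sequence is a sequence of natural numbers indexed by $\mathbb{N}$, so a least nonzero index exists whenever $C \neq 0$.
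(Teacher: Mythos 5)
Your proof is correct, and it takes a genuinely different route from the paper's. The paper proves surjectivity degree by degree, by induction on $k$: it writes out the defining diagram for $H_0V[k]$ and $H_0W[k]$ in terms of the counit $\Res_{\Bij_k}^{\Fin}\Lan_{\Bij_{<k}}^{\Fin}\Res_{\Bij_{<k}}^{\Fin} \to \Res_{\Bij_k}^{\Fin}$, uses right exactness of $\Lan$ and $\Res$ together with the inductive hypothesis to make the left vertical arrow surjective, and then invokes the snake lemma. You instead reduce to a Nakayama-type statement: pass to the cokernel $C$, use right exactness of $H_0$ (Observation~\ref{homobs}) to get $H_0 C = 0$, and then argue by contradiction at the minimal nonzero degree of $C$, using the formula $H_0 C[n] = (C / \sk_{<n} C)[n]$ together with the Yoneda description of $\sk_{<n}$. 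The student's route is shorter and more conceptual once the reduction to the cokernel is made, and it isolates the reusable fact that $H_0 C = 0$ implies $C = 0$; the paper's route avoids explicitly forming $C$ and works entirely inside the Kan-extension picture, which is closer in spirit to the \cite{FIMod} template it is imitating. Both proofs ultimately exploit the same underlying finiteness (a nonzero $C$ has a least nonzero degree), just packaged differently---your contradiction-at-the-minimum replaces the paper's induction-on-degree.

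One small presentational remark: you don't need $\tensorpower^{<n}$ to be finitely generated for the well-definedness step you quote, but the paper does impose that hypothesis in Definition~\ref{skel}, and indeed $\tensorpower^{<n}$ is finitely generated, so there is nothing to worry about; it is worth saying this explicitly since the skeleton construction is stated only for finitely generated $P$.
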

\begin{proof}
It suffices to prove a surjection
\be \nn
\Res_{\Bij_k}^{\Fin} V \longrightarrow \Res_{\Bij_k}^{\Fin} W \longrightarrow 0
\ee
for each $k$.  When $k=0$, Observation \ref{homobs} gives us the surjection.  Proceeding by induction, we may assume a surjection
\be \nn
\Res_{\Bij_{< k}}^{\Fin} V \longrightarrow \Res_{\Bij_{< k}}^{\Fin} W \longrightarrow 0
\ee
in order to prove the surjection for $k$.  Build the diagram defining the induced map on zeroth homology in degree $k$:
\be \nn
\begin{xy}
(20,20)*+{\Res_{\Bij_k}^{\Fin} \Lan_{\Bij_{<k}}^{\Fin} \Res_{\Bij_{<k}}^{\Fin} V}="a";
(20,0)*+{\Res_{\Bij_k}^{\Fin} \Lan_{\Bij_{<k}}^{\Fin} \Res_{\Bij_{<k}}^{\Fin} W}="b";
(70,20)*+{\Res_{\Bij_k}^{\Fin} V}="c";
(70,0)*+{\Res_{\Bij_k}^{\Fin} W}="d";
(100,20)*+{H_0V[k]}="e";
(100,0)*+{H_0W[k]}="f";
(130,20)*+{0}="g";
(130,0)*+{0}="h";
{\ar "a"; "b"};
{\ar "c"; "d"};
{\ar "e"; "f"};
{\ar "a"; "c"};
{\ar^h "c"; "e"};
{\ar "e"; "g"};
{\ar "b"; "d"};
{\ar_h "d"; "f"};
{\ar "f"; "h"};
\end{xy}
\ee
The first vertical map is surjective since by the inductive hypothesis combined with right exactness of $\Res$ and $\Lan$.  The last vertical map is surjective by assumption.  Replacing the lower left corner with a suitable image, we can arrange for the second row to be a short exact sequence:
\be \nn
\begin{xy}
(20,20)*+{\Res_{\Bij_k}^{\Fin} \Lan_{\Bij_{<k}}^{\Fin} \Res_{\Bij_{<k}}^{\Fin} V}="a";
(20,0)*+{\im}="b";
(70,20)*+{\Res_{\Bij_k}^{\Fin} V}="c";
(70,0)*+{\Res_{\Bij_k}^{\Fin} W}="d";
(100,20)*+{H_0V[k]}="e";
(100,0)*+{H_0W[k]}="f";
(130,20)*+{0\phantom{.}}="g";
(130,0)*+{0.}="h";
(-10,0)*+{0}="z";
{\ar "a"; "b"};
{\ar "c"; "d"};
{\ar "e"; "f"};
{\ar "a"; "c"};
{\ar^h "c"; "e"};
{\ar "e"; "g"};
{\ar "b"; "d"};
{\ar_h "d"; "f"};
{\ar "f"; "h"};
{\ar "z"; "b"};
\end{xy}
\ee
The left vertical arrow is still a surjection since it is a composition of two surjections.  It follows by the snake lemma that the middle vertical arrow is a surjection, as required.
\end{proof}
\subsection{Homology of Schur projectives}
\begin{prop} \label{schurhomology}
If $\mathbb{P}$ is a Schur projective of degree $k$ then
\be \nn
H_0 \mathbb{P}[l] = 0
\ee
for every $l > k$.
\end{prop}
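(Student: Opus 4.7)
The plan is first to reduce the claim to showing $\sk_{\leq k}\mathbb{P} = \mathbb{P}$. Once that equality is in hand, then for every $l > k$ we have $\sk_{\leq k}\mathbb{P} \subseteq \sk_{<l}\mathbb{P} \subseteq \mathbb{P}$, forcing $\sk_{<l}\mathbb{P} = \mathbb{P}$ and hence
\be \nn
H_0\mathbb{P}[l] \;=\; \bigl(\mathbb{P}/\sk_{<l}\mathbb{P}\bigr)[l] \;=\; 0.
\ee

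To prove $\sk_{\leq k}\mathbb{P} = \mathbb{P}$, I would apply Proposition \ref{skcounit} with the wide subcategory $\Bij_{\leq k} \subseteq \Fin_{\leq k}$, which identifies $\sk_{\leq k}\mathbb{P}$ with the image of the counit
\be \nn
\epsilon \,:\, \Lan_{\Bij_{\leq k}}^{\Fin} \Res_{\Bij_{\leq k}}^{\Fin}\mathbb{P} \longrightarrow \mathbb{P}.
\ee
Since $\Bij = \bigsqcup_n \Bij_n$ is a disjoint union of groups and $W$ vanishes outside $\Bij_{\leq k}$, the restriction-then-extension map is an isomorphism $W \simeq \Lan_{\Bij_{\leq k}}^{\Bij} \Res_{\Bij_{\leq k}}^{\Bij} W$, and transitivity of left Kan extensions then yields
\be \nn
\mathbb{P} \;=\; \Lan_{\Bij}^{\Fin} W \;\simeq\; \Lan_{\Bij_{\leq k}}^{\Fin}\bigl(\Res_{\Bij_{\leq k}}^{\Bij} W\bigr).
\ee
Thus $\mathbb{P}$ itself presents as a left Kan extension from $\Bij_{\leq k}$. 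Applying the triangle identities for the adjunction $\Lan_{\Bij_{\leq k}}^{\Fin} \dashv \Res_{\Bij_{\leq k}}^{\Fin}$ to this presentation factors the identity $\mathbb{P} \to \mathbb{P}$ through $\epsilon$, whence $\epsilon$ is componentwise surjective and the required equality follows.

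The argument is largely Kan-extension bookkeeping; the one place to take care is the transitivity step, but since $\Bij_{\leq k} \hookrightarrow \Bij$ is the inclusion of a union of connected components, the corresponding left Kan extension is just extension by zero, so identifying $\Lan_{\Bij}^{\Fin} W$ with $\Lan_{\Bij_{\leq k}}^{\Fin}\bigl(\Res_{\Bij_{\leq k}}^{\Bij} W\bigr)$ poses no essential obstacle.
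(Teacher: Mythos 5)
Your proof is correct and rests on the same essential fact as the paper's: a Schur projective $\mathbb{P}$ of degree $k$ is a left Kan extension from $\Bij_{\leq k}$ (the paper writes it from $\Bij_{< l}$), so the counit $\Lan\Res\,\mathbb{P} \to \mathbb{P}$ is split epic by the triangle identity, forcing the relevant cokernel to vanish. Your detour through $\sk_{\leq k}\mathbb{P} = \mathbb{P}$ and Proposition \ref{skcounit} is just an unpacking of the same computation the paper performs directly on the cokernel defining $H_0\mathbb{P}[l]$.
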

\begin{proof}
Since $l > k$, we may write
\be \nn
\mathbb{P} = \Lan_{\Bij_{< l}}^{\Fin} W
\ee
for some functor $W : \Bij_{< l} \longrightarrow \Vect$.  Consequently,
\bea \nn
H_0 \mathbb{P}[l] & = & \coker \left( \Lan_{\Bij_{< l}}^{\Fin} \Res_{\Bij_{< l}}^{\Fin} \mathbb{P} \longrightarrow \mathbb{P} \right)[l] \\ \nn
& = & \coker \left( \Lan_{\Bij_{<l}}^{\Fin} \Res_{\Bij_{< l}}^{\Fin}\Lan_{\Bij_{<l}}^{\Fin} W \longrightarrow \Lan_{\Bij_{< l}}^{\Fin} W \right)[l] \\ \nn
& = & 0,
\eea
where this cokernel vanishes since the map is split epic by the counit-unit equations.
\end{proof}
\begin{prop} \label{schurhomologyindegree}
Given a Schur projective $\mathbb{P}$ of (pure) degree $k$ written $\mathbb{P} = \Lan_{\Bij_{k}}^{\Fin} W$,
the zeroth homology of $\mathbb{P}$ satisfies
\be \nn
H_0 \mathbb{P}[k] \simeq W,
\ee
an isomorphism of $\Bij_k$-representations.
\end{prop}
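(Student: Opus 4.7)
The plan is to use the pointwise (coend) formula
\[ \mathbb{P}[l] \;\simeq\; \Fin([k],[l]) \otimes_{\mathbb{Q}\Bij_k} W \]
for the left Kan extension, viewing $\Fin([k],[l])$ as a right $\mathbb{Q}\Bij_k$-module via composition with automorphisms of $[k]$. Specializing to $l=k$ gives a decomposition of $(\Bij_k,\Bij_k)$-bimodules
\[ \Fin([k],[k]) \;=\; \mathbb{Q}\Bij_k \;\oplus\; N', \]
where $N'$ is the span of the non-bijections; both summands are sub-bimodules because composing with a non-bijection destroys surjectivity and cannot restore it. Tensoring over $\mathbb{Q}\Bij_k$ with $W$ then yields a $\Bij_k$-equivariant direct-sum decomposition
\[ \mathbb{P}[k] \;\simeq\; W \;\oplus\; N, \qquad N \;:=\; N' \otimes_{\mathbb{Q}\Bij_k} W, \]
in which the first summand carries the original $\Bij_k$-action on $W$.

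The next step is to identify $\sk_{<k}\mathbb{P}[k]$ with $N$. By Proposition~\ref{skcounit}, $\sk_{<k}\mathbb{P}[k]$ is the image of the counit, i.e.\ the span of elements $\alpha_*(v)$ for $\alpha\colon [l]\to [k]$ with $l < k$ and $v \in \mathbb{P}[l]$. Writing $v = \beta \otimes w$ with $\beta\colon [k]\to [l]$, we find $\alpha_*(v) = (\beta\alpha) \otimes w$, and $\beta\alpha$ factors through $[l]$ and hence is non-bijective, giving $\sk_{<k}\mathbb{P}[k] \subseteq N$. Conversely, any non-bijection $f\colon [k]\to[k]$ factors as $[k]\xrightarrow{h}[l]\xrightarrow{g}[k]$ through $[l]$ of size $l = |f([k])| < k$, so $f\otimes w = g_*(h \otimes w)$ exhibits it as an element of $\sk_{<k}\mathbb{P}[k]$.

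Combining these two steps,
\[ H_0\mathbb{P}[k] \;=\; \mathbb{P}[k]/\sk_{<k}\mathbb{P}[k] \;\simeq\; W \]
as $\Bij_k$-representations, completing the proof. The main bookkeeping obstacle is to confirm that the direct-sum decomposition above is honestly $\Bij_k$-equivariant for the action inherited from post-composition in $\Fin([k],[k])$; this follows because post-composition by $\sigma \in \Bij_k$ preserves the bijection and non-bijection subspaces of $\Fin([k],[k])$ separately, and on the bijection summand the isomorphism $\mathbb{Q}\Bij_k \otimes_{\mathbb{Q}\Bij_k} W \simeq W$ transports the left regular action back to the original $\Bij_k$-action on $W$.
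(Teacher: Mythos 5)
Your argument is correct, but it follows a genuinely different route from the paper's. The paper first treats the special case $W=\mathbb{Q}\Bij_k$, identifying $\Lan^{\Fin}_{\Bij_k}\mathbb{Q}\Bij_k \simeq \tensorpower^k$ by Yoneda, unwinding the exact sequence defining $H_0\tensorpower^k[k]$ to see that the image of $\bigoplus_{n<k}\QFin([k],[n])\otimes\QFin([n],[k])\to\QFin([k],[k])$ is the span of non-bijections, and concluding $H_0\tensorpower^k[k]\simeq\mathbb{Q}\Bij_k$. It then passes to arbitrary $W$ by additivity of $H_0$ and $\Lan$ together with semisimplicity of $\mathbb{Q}\Bij_k$ (every $W$ is a sum of summands of the regular representation). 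You instead invoke the pointwise coend formula $\mathbb{P}[l]\simeq\Fin([k],[l])\otimes_{\mathbb{Q}\Bij_k}W$ up front, observe the $(\Bij_k,\Bij_k)$-bimodule splitting $\Fin([k],[k])=\mathbb{Q}\Bij_k\oplus N'$ into bijections and non-bijections, and identify $\sk_{<k}\mathbb{P}[k]$ with $N'\otimes_{\mathbb{Q}\Bij_k}W$ directly via the factor-through-$[l]$ characterization of the skeleton. The essential combinatorial observation is the same in both proofs (non-bijections of $[k]$ are exactly the endomorphisms factoring through a smaller set), but your version avoids the reduction to the free case and never invokes semisimplicity, so in principle it would survive in a non-semisimple setting; the paper's version stays closer to the Yoneda-flavored machinery it has already set up and avoids invoking the coend description of $\Lan$, which is standard but not stated in the paper. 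Both are complete; yours is arguably the more conceptual proof of the two.
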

\begin{proof}
To begin, let's take $W = \mathbb{Q} \Bij_k$, the regular representation.  For any representation $V$,
\be \nn
\Vect^{\Fin}(\Lan^{\Fin}_{\Bij_k} \mathbb{Q} \Bij_k, V) \simeq \Vect^{\Bij_k}(\mathbb{Q} \Bij_k, \Res^{\Fin}_{\Bij_k} V) \simeq V[k],
\ee
and so
\be \nn
\Lan^{\Fin}_{\Bij_k} \mathbb{Q} \Bij_k \simeq \tensorpower^k
\ee
since they represent the same functor.  By the definition of homology, we get an exact sequence
\be \nn
\left(\raisebox{1pt}{\scalebox{.7}{$\Vect^{\Fin}\left(\tensorpower^{< k}, \tensorpower^k \right)$}} \otimes \tensorpower^{< k} \right) [k] \longrightarrow \tensorpower^k[k] \longrightarrow H_0\tensorpower^k[k] \longrightarrow 0.
\ee
Expanding, the sequence becomes
\be \nn
\bigoplus_{n < k} \QFin([k],[n]) \otimes  \QFin([n],[k])  \longrightarrow \QFin([k],[k]) \longrightarrow H_0\tensorpower^k[k] \longrightarrow 0.
\ee
The first map is given by composition, so its image is the span of all maps factoring through a set of cardinality less than $k$; this leaves only bijections for the cokernel.  It follows that
\be \nn
H_0 \Lan^{\Fin}_{\Bij_k} \mathbb{Q} \Bij_k \simeq \mathbb{Q} \Bij_k.
\ee
Since $H_0$ and $\Lan$ are additive, we obtain the conclusion for any summand of $\mathbb{Q} \Bij_k$ as well.  But any $W$ is a sum of such summands, and the claim is proved.
\end{proof}
\begin{lem} \label{goodcover}
Given a representation $V : \Fin \longrightarrow \Vect $ and an injection $i$ splitting the homology map
\be \nn
\begin{xy}
(0,10)*+{H_0V}="a"; (35,0)*+{\mathbb{R} V}="b"; (70,10)*+{H_0V,}="c"; 
{\ar_i "a"; "b"};
{\ar_h "b"; "c"};
{\ar^{1} "a"; "c"};
\end{xy}
\ee
the composite
\be \nn
\begin{xy}
(0,0)*+{\mathbb{S} H_0V}="a"; (30,0)*+{\mathbb{S} \mathbb{R} V}="b"; (60,0)*+{V}="c"; 
{\ar^{\mathbb{S} i} "a"; "b"};
{\ar^{\epsilon_{\Bij}^{\Fin}V} "b"; "c"};
\end{xy}
\ee
is a surjection (the second map is the counit of the $\mathbb{S} \dashv \mathbb{R}$ adjunction).  Furthermore, if $V$ is finitely generated of degree $k$ then the induced map
\be \nn
\begin{xy}
(0,0)*+{H_0\mathbb{S}H_0V[k]}="a"; (40,0)*+{H_0V[k]}="c"; 
{\ar "a"; "c"};
\end{xy}
\ee
is an isomorphism.
\end{lem}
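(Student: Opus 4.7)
The plan is to analyze $H_0$ of the composite $\phi = \epsilon_{\Bij}^{\Fin}V \circ \mathbb{S}i$, show that $H_0\phi$ is a split surjection in general and an isomorphism in degree $k$ when $V$ is finitely generated of degree $k$, and then use Proposition~\ref{homreflectssurjections} to promote this into the claimed surjection of $\Fin$-representations.

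First I would identify $\phi$ via the $\mathbb{S}\dashv\mathbb{R}$ adjunction. By a triangle identity and naturality of the unit,
\[
\mathbb{R}\phi \circ \eta_{\Bij}^{\Fin}(H_0V) \;=\; \mathbb{R}\epsilon_{\Bij}^{\Fin}V \circ \eta_{\Bij}^{\Fin}(\mathbb{R}V) \circ i \;=\; i.
\]
Post-composing with $h_V$ and invoking the splitting hypothesis $h_V \circ i = 1_{H_0V}$ gives $h_V \circ \mathbb{R}\phi \circ \eta_{\Bij}^{\Fin}(H_0V) = 1_{H_0V}$. Naturality of $h$ (Observation~\ref{homobs}) rewrites this as $H_0\phi \circ h_{\mathbb{S}H_0V} \circ \eta_{\Bij}^{\Fin}(H_0V) = 1_{H_0V}$, exhibiting a canonical section for $H_0\phi$. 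Proposition~\ref{homreflectssurjections} then upgrades this split surjection to surjectivity of $\phi$ itself, settling the first half of the lemma.

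For the isomorphism claim, assume $V$ is finitely generated of degree $k$, so that $H_0V$ is supported on $[0], \ldots, [k]$ and decomposes as a $\Bij$-representation $H_0V = W_0 \oplus \cdots \oplus W_k$, with each $W_j$ concentrated on $[j]$. Then
\[
\mathbb{S}H_0V \;\simeq\; \mathbb{S}W_0 \oplus \cdots \oplus \mathbb{S}W_k,
\]
a sum of Schur projectives of pure degrees $0, 1, \ldots, k$. Proposition~\ref{schurhomology} kills $H_0\mathbb{S}W_j[k]$ whenever $j < k$, while Proposition~\ref{schurhomologyindegree} identifies the surviving $j=k$ summand with $W_k = H_0V[k]$. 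Hence $H_0\mathbb{S}H_0V[k]$ and $H_0V[k]$ are abstractly isomorphic finite-dimensional $\Bij_k$-representations, and combined with the canonical section from the previous paragraph, a dimension count forces $H_0\phi[k]$ to be an isomorphism.

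The step I expect to require most care is the bookkeeping around naturality of $h$ and the $\mathbb{S}\dashv\mathbb{R}$ triangle identity---verifying that the section of $H_0\phi$ built from $h \circ \eta_{\Bij}^{\Fin}$ really does have the correct source and target after evaluating at $[k]$, and that the abstract identification coming from Proposition~\ref{schurhomologyindegree} is compatible with the one coming from $H_0\phi[k]$. Everything else reduces to the homology computation for Schur projectives already in hand.
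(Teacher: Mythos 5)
Your proposal is correct and follows essentially the same path as the paper's proof: both use the triangle identity $\mathbb{R}\epsilon \circ \eta\mathbb{R} = 1$ together with naturality of $\eta$ and of the homology map $h$ to produce a section of $H_0\phi$, then invoke Proposition~\ref{homreflectssurjections}, and both handle the degree-$k$ isomorphism by decomposing $H_0 V$ (the paper into irreducibles, you by support degree --- equivalent for the purpose) and applying Propositions~\ref{schurhomology} and~\ref{schurhomologyindegree}. The paper organizes the first half as one large commutative diagram rather than a chain of equalities, but the underlying computation is identical.
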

\begin{proof}
Build the diagram
\be \nn
\begin{xy}
(0,40)*+{H_0 V}="a"; (50,40)*+{\mathbb{R}V}="b";
(0,20)*+{\mathbb{R}\mathbb{S}H_0 V}="d";
(50,20)*+{\mathbb{R}\mathbb{S}\mathbb{R}V}="e";
(90,20)*+{\mathbb{R}V}="f";
(0,0)*+{H_0\mathbb{S}H_0 V}="g";
(50,0)*+{H_0\mathbb{S}\mathbb{R}V}="h";
(90,0)*+{H_0V.}="i";
{\ar^{i} "a"; "b"};
{\ar^{\mathbb{R} \mathbb{S} i} "d"; "e"};
{\ar^{\mathbb{R} \epsilon V} "e"; "f"};
{\ar^{H_0 \mathbb{S} i} "g"; "h"};
{\ar^{H_0 \epsilon V} "h"; "i"};
{\ar_{\eta H_0 V} "a"; "d"};
{\ar_{h \mathbb{S} H_0 V} "d"; "g"};
{\ar_{\eta \mathbb{R} V} "b"; "e"};
{\ar_{h \mathbb{S} \mathbb{R} V} "e"; "h"};
{\ar^{h} "f"; "i"};
{\ar^1 "b"; "f"};
\end{xy}
\ee
The left three horizontal maps are induced by $i$; the right two are built from the counit $\epsilon_{\Bij}^{\Fin}$; the bottom two squares commute by naturality of the homology map $h$; the upper right triangle is one of the counit-unit identities; and the upper left square commutes by naturality of the unit $\eta_{\Bij}^{\Fin}$.  The composite along the upper right gives an identity map, so the composite along the bottom
\be \nn
(H_0 \mathbb{S} i ) ( H_0 \epsilon_{\Bij}^{\Fin} V) = H_0 \left((\mathbb{S} i )(\epsilon_{\Bij}^{\Fin} V)\right)
\ee
is epic.  As required, Proposition \ref{homreflectssurjections} gives that $(\mathbb{S} i )(\epsilon_{\Bij}^{\Fin} V)$ is also epic. \\ \\
\noindent
In the case that $V$ is finitely generated of degree $k$, we show that the surjection
\be \nn
H_0 \mathbb{S} H_0 V[k] \longrightarrow  H_0 V[k] \longrightarrow 0;
\ee
is actually an isomorphism.  Since the category of representations of $\Bij$ is semisimple, we may write $H_0V[k]$ as a direct sum of irreducible representations of various symmetric groups. The functors $\Lan_{\Bij}^{\Fin}$ and $H_0$ are additive, so the result follows from Proposition \ref{schurhomology} and Proposition \ref{schurhomologyindegree}.
\end{proof}
\noindent
Armed with Lemma \ref{goodcover} and Propositions \ref{schurhomology} and \ref{schurhomologyindegree}, we observe an alternative definition of the degree of a representation $V \in \fg$.
\begin{obs}[Degree in terms of homology]\label{degreebyhomology}
A finitely generated $\Fin$-representation $V$ has degree $k$ if and only if its zeroth homology is supported on sets of cardinality at most $k$.
\end{obs}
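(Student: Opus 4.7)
The plan is to prove the two directions separately, invoking the machinery we have already built: right-exactness of $H_0$ and Proposition \ref{schurhomology} for one direction, and Lemma \ref{goodcover} for the other.

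For the forward direction, suppose $V$ has degree $k$, i.e., there is a surjection $\mathbb{P} \twoheadrightarrow V$ from some Schur projective $\mathbb{P}$ of degree $k$. By Observation \ref{homobs}, $H_0$ is right exact, so we get a surjection $H_0 \mathbb{P} \twoheadrightarrow H_0 V$. Proposition \ref{schurhomology} tells us $H_0 \mathbb{P}[l] = 0$ for all $l > k$, so the same vanishing passes to $H_0 V$. Hence $H_0 V$ is supported on sets of cardinality at most $k$.

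For the converse, suppose $H_0 V[l] = 0$ for all $l > k$, so $H_0 V$ (viewed as a $\Bij$-representation) is supported on $[0], [1], \ldots, [k]$. Since the category $\Vect^{\Bij}$ is semisimple, the homology surjection $h : \mathbb{R} V \twoheadrightarrow H_0 V$ admits a $\Bij$-linear splitting $i : H_0 V \hookrightarrow \mathbb{R} V$. Lemma \ref{goodcover} then provides a surjection
\begin{equation} \nonumber
\mathbb{S} H_0 V \;\twoheadrightarrow\; V.
\end{equation}
By construction $\mathbb{S} H_0 V = \Lan_{\Bij}^{\Fin} H_0 V$ is a Schur projective, and because $H_0 V$ is supported in degrees $\leq k$, it is a Schur projective of degree $k$ in the sense of Definition \ref{schurdef}. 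So $V$ is a quotient of a Schur projective of degree $k$, hence has degree $k$.

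There is really no main obstacle here: once Lemma \ref{goodcover} and Proposition \ref{schurhomology} are in hand, both directions are essentially one-line deductions. The only subtle input is that the semisimplicity of $\Vect^{\Bij}$ is needed to produce the splitting $i$ required by Lemma \ref{goodcover}; this is the standard fact that representations of a disjoint union of finite groups over $\mathbb{Q}$ are semisimple.
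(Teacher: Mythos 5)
Your proof is correct and follows the same approach the paper intends: the paper states this observation immediately after the sentence ``Armed with Lemma \ref{goodcover} and Propositions \ref{schurhomology} and \ref{schurhomologyindegree}, we observe\ldots'' without writing out the argument, and your two directions use exactly that machinery (right exactness of $H_0$ plus Proposition \ref{schurhomology} for the forward implication, semisimplicity of $\Vect^{\Bij}$ plus Lemma \ref{goodcover} for the converse). You in fact get by without invoking Proposition \ref{schurhomologyindegree} at all, which is a minor tightening; the only implicit hypothesis worth flagging is that $V$ being finitely generated guarantees $H_0V$ lives in only finitely many degrees and is finite dimensional there, so that $\mathbb{S}H_0V$ is a genuine Schur projective in the sense of Definition \ref{schurdef}.
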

\begin{prop}[Pruning homology]\label{prune}
A subrepresentation $K \subseteq \mathbb{P}$ of a Schur projective of degree $k$ satisfies
\bea \nn
\vdots \hspace{.2in} & \simeq & 0 \\ \nn
H_0K[k+3] & \simeq & 0 \\ \nn
H_0K[k+2] & \simeq & 0 \\
H_0K[k+1] & \simeq & H_0K[k+1]\varepsilon. \label{notkilled}
\eea
\end{prop}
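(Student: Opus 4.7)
The plan is to deduce both statements from Proposition \ref{killhomology}, which requires exhibiting squishers for $K$. First I unpack the Schur projective: $\mathbb{P} = \mathbb{S}W$ with $W$ a finite-dimensional $\Bij$-representation supported in degrees $\leq k$ decomposes into a finite direct sum of isotype projectives $\mathbb{P}_\lambda$ with $|\lambda| \leq k$, each of which is a summand of $\tensorpower^{|\lambda|}$ (apply $\mathbb{S}$ to the decomposition $\mathbb{Q}\Bij_{|\lambda|} = c_\lambda \mathbb{Q}\Bij_{|\lambda|} \oplus (1-c_\lambda)\mathbb{Q}\Bij_{|\lambda|}$ and use Example \ref{sumexample}). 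Thus $\mathbb{P}$ is a summand, hence a subrepresentation, of some finite direct sum $\bigoplus_i \tensorpower^{m_i}$ with each $m_i \leq k$, and so is $K \subseteq \mathbb{P}$.

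For $l \geq k+2$, I take $S = \tensorpower^l = \mathbb{S}(\mathbb{Q}\Bij_l)$. The Upper Squishing Lemma (Lemma \ref{uppersquisher}) says each $\tensorpower^{m_i}$ squishes $\tensorpower^l$ through $\tensorpower^{\leq m_i+1}$, which sits inside the larger $\tensorpower^{<l}$ since $m_i + 1 \leq k+1 \leq l-1$; enlarging the middle projective trivially preserves the squisher. Because $\tensorpower^l$ is projective, Lemma \ref{squishext} (Extensions Squish) propagates the property through the iterated extension $\bigoplus_i \tensorpower^{m_i}$ and then downward to every subrepresentation, in particular to $K$. Applying Proposition \ref{killhomology} with $W = \mathbb{Q}\Bij_l$ (so $\mathbb{S}W = \tensorpower^l$) yields $\Vect^{\Bij_l}(\mathbb{Q}\Bij_l, H_0 K[l]) = 0$, forcing $H_0 K[l] = 0$.

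For $l = k+1$, the strategy is identical but with $S = \Theta^{k+1}$ and target $\tensorpower^{<k+1} = \tensorpower^{\leq k}$. Here the case split matters. For each $m_i < k$, the Upper Squishing Lemma gives that $\tensorpower^{m_i}$ squishes $\tensorpower^{k+1}$ through $\tensorpower^{\leq m_i+1} \subseteq \tensorpower^{\leq k}$, and then Observation \ref{sumsquish} transfers the property to the direct summand $\Theta^{k+1}$ of $\tensorpower^{k+1}$. For the borderline case $m_i = k$, the Lower Squishing Lemma (Lemma \ref{lowersquisher}) is exactly tailored: $\tensorpower^k$ squishes $\Theta^{k+1}$ through $\tensorpower^{\leq k}$. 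Combining as before through Extensions Squish, $K$ squishes $\Theta^{k+1} = \mathbb{S}W'$ through $\tensorpower^{<k+1}$, where $W' = (1-\varepsilon_{k+1})\mathbb{Q}\Bij_{k+1}$. Proposition \ref{killhomology} then gives $\Vect^{\Bij_{k+1}}(W', H_0 K[k+1]) = 0$. Since $W'$ is the complementary summand to the one-dimensional sign isotype inside the regular representation $\mathbb{Q}\Bij_{k+1}$, it contains every non-sign irreducible of $\Bij_{k+1}$ as a direct summand, so every non-sign isotype of $H_0 K[k+1]$ is killed; this is precisely $H_0 K[k+1] \simeq H_0 K[k+1]\varepsilon_{k+1}$.

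The main obstacle is essentially bookkeeping rather than anything conceptual: one must track that the Upper Squishing Lemma suffices for the summands $\tensorpower^{m_i}$ with $m_i$ strictly below $k$, while the Lower Squishing Lemma is indispensable for $m_i = k$, and one must correctly identify $W'$ as exhausting the non-sign isotypes of $\Bij_{k+1}$ so that Proposition \ref{killhomology} finishes the argument.
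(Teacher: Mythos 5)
Your proof is correct and follows essentially the same route as the paper's (terser) argument: decompose $\mathbb{P}$ into tensor-power summands, use the Upper Squishing Lemma to get vanishing of $H_0K$ in degrees $\geq k+2$ via Proposition \ref{killhomology}, and invoke the Lower Squishing Lemma precisely for the borderline degree $k+1$ to kill the non-sign isotype. The details you fill in — enlarging the middle projective, closure under extensions and subobjects of the squishing set, and the identification of $(1-\varepsilon_{k+1})\mathbb{Q}\Bij_{k+1}$ as exhausting non-sign irreducibles — are exactly the steps the paper compresses into a single paragraph.
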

\begin{proof}
Any such $\mathbb{P}$ is a direct summand of a direct sum of copies of $\tensorpower^{\leq k}$, so $\mathbb{P}$ squishes any $\tensorpower^n$ through $\tensorpower^{\leq k+1}$ by the Upper Squishing Lemma \ref{uppersquisher}.  All the isomorphisms except (\ref{notkilled}) can now be deduced from Lemma \ref{killhomology}.  The Lower Squishing Lemma \ref{lowersquisher} gives the last isomorphism in a similar fashion.
\end{proof}
\begin{prop} \label{shapelem}
Let $\mathbb{P}$ be a Schur projective of degree $k$ and suppose we have an injection
\be \nn
K \hookrightarrow \mathbb{P}.
\ee
The induced map
\be \nn
H_0K[k](1-\varepsilon) \longrightarrow H_0\mathbb{P}[k](1-\varepsilon)
\ee
is also an injection.
\end{prop}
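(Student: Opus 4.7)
The plan is to translate the injectivity statement into a factorization problem about maps out of Schur projectives, and then apply the intersection lemma (Proposition \ref{intersectlem}). By semisimplicity of $\Bij_k$-representations, the map $H_0K[k](1-\varepsilon) \to H_0\mathbb{P}[k](1-\varepsilon)$ is an injection if and only if, for every non-sign irreducible $\Bij_k$-representation $W$ (i.e., one with $W\varepsilon = 0$), the induced map
\[
\Vect^{\Bij_k}(W, H_0 K[k]) \longrightarrow \Vect^{\Bij_k}(W, H_0 \mathbb{P}[k])
\]
is injective. I set $\mathbb{P}' = \Lan_{\Bij_k}^{\Fin} W$, a Schur projective of pure degree $k$; using the $\mathbb{S} \dashv \mathbb{R}$ adjunction together with projectivity of $\mathbb{P}'$, one identifies $\Vect^{\Bij_k}(W, H_0 V[k])$ with the quotient $\Vect^{\Fin}(\mathbb{P}', V)/\Vect^{\Fin}(\mathbb{P}', \sk_{<k} V)$ for any representation $V$.

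Under this rewriting, an element of the kernel corresponds to a map $\mathbb{P}' \to K$ whose composite into $\mathbb{P}$ lands in $\sk_{<k}\mathbb{P}$---that is, a map $\mathbb{P}' \to (\sk_{<k}\mathbb{P}) \cap K$---modulo those factoring through $\sk_{<k}K$. The desired injectivity is therefore equivalent to the equality
\[
\Vect^{\Fin}(\mathbb{P}', (\sk_{<k}\mathbb{P}) \cap K) = \Vect^{\Fin}(\mathbb{P}', \sk_{<k}K),
\]
which is precisely Proposition \ref{intersectlem} applied with $U = K$, $V = \mathbb{P}$, $S = \mathbb{P}'$, and $P = \tensorpower^{<k}$, provided I can verify the hypothesis that $\tensorpower^{<k}$ squishes $\mathbb{P}'$ through $\tensorpower^{<k}$.

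Verifying this squishing statement is the main obstacle, and I expect to handle it by combining both Squishing Lemmas. Since $W$ is non-sign, $\mathbb{P}'$ is a direct summand of $\Theta^k$ and of $\tensorpower^k$. The Lower Squishing Lemma at level $k-1$ shows that $\tensorpower^{k-1}$ squishes $\Theta^k$, hence (by Observation \ref{sumsquish}) the summand $\mathbb{P}'$, through $\tensorpower^{<k}$. For each $i < k-1$, the Upper Squishing Lemma shows $\tensorpower^i$ squishes $\tensorpower^k$, and thus the summand $\mathbb{P}'$, through $\tensorpower^{\leq i+1}$; enlarging the inner target along the direct-sum inclusion $\tensorpower^{\leq i+1} \hookrightarrow \tensorpower^{<k}$ (which preserves the triple-Hom in which squishers live) gives that $\tensorpower^i$ squishes $\mathbb{P}'$ through $\tensorpower^{<k}$. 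With every summand $\tensorpower^i$ of $\tensorpower^{<k}$ now a squishing $V$ for $\mathbb{P}'$ through $\tensorpower^{<k}$, Lemma \ref{squishext}---which applies because $\mathbb{P}'$ is projective---assembles them via the extension closure into a single squishing by $\tensorpower^{<k}$ itself, completing the reduction.
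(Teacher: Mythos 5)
Your proof is correct, and it arrives at the same key tool as the paper: Proposition~\ref{intersectlem} applied with $P = \tensorpower^{<k}$, with the Lower Squishing Lemma supplying the squishing hypothesis. The organization differs. The paper works with $S = \Theta^k$ directly, exploiting that $\Vect^{\Fin}(\Theta^k, V)$ computes $V[k](1-\varepsilon)$, and deduces the injection by building a cartesian square and invoking the second isomorphism theorem; you instead decompose $\Theta^k$ into its irreducible summands $\mathbb{P}'$, translate through the $\mathbb{S} \dashv \mathbb{R}$ adjunction and projectivity, and compute the kernel of the quotient map directly. These are two bookkeepings of the same underlying argument. One place where your write-up is actually \emph{more} careful than the paper's: the paper asserts that ``the Lower Squishing Lemma gives that $\tensorpower^{<k}$ squishes $\Theta^k$ through $\tensorpower^{<k}$,'' but as stated that lemma only makes $\tensorpower^{k-1}$ --- one summand of $\tensorpower^{<k}$ --- the squishing object. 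The assembly you carry out --- restricting to the summand via Observation~\ref{sumsquish}, enlarging the inner target of the Upper Squishing Lemma's squisher from $\tensorpower^{\leq i+1}$ to $\tensorpower^{<k}$ for $i \leq k-2$, and then invoking the extension closure of Lemma~\ref{squishext} (valid since $\mathbb{P}'$, resp.\ $\Theta^k$, is projective) to combine the summands of $\tensorpower^{<k}$ into the full squishing object --- is exactly the detail the paper leaves implicit.
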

\begin{proof}
Build the diagram
\be \nn
\begin{xy}
(20,55)*+{\UP(\Theta^k, \mathbb{P})}="a";
(20,40)*+{\UP(\Theta^k, \sk_{<k} \mathbb{P}) + \UP(\Theta^k, K)}="b";
(0,20)*+{\UP(\Theta^k, \sk_{<k} \mathbb{P})}="c"; (40,20)*+{\UP(\Theta^k, K)}="g";
(20,0)*+{\UP(\Theta^k, (\sk_{<k} \mathbb{P}) \cap K).}="h";
{\ar "b"; "a"};
{\ar "c"; "b"};
{\ar "h"; "g"};
{\ar "g"; "b"};
{\ar "h"; "c"};
\end{xy}
\ee
Each arrow is either a natural inclusion or an inclusion by exactness of $\UP(\Theta^k,-)$.   Exactness also gives that the diamond is cartesian.  The second isomorphism theorem gives an injection
\be \nn
\frac{\UP(\Theta^k, K)}{ \UP(\Theta^k, (\sk_{<k} \mathbb{P}) \cap K)} \simeq \frac{\UP(\Theta^k, \sk_{<k} \mathbb{P}) + \UP(\Theta^k, K)}{\UP(\Theta^k, \sk_{<k} \mathbb{P})} \hookrightarrow \frac{\UP(\Theta^k, \mathbb{P}) }{\UP(\Theta^k, \sk_{<k} \mathbb{P})}.
\ee
The Lower Squishing Lemma \ref{lowersquisher} gives that $\tensorpower^{<k}$ squishes $\Theta^k$ through $\tensorpower^{<k}$, and so we may apply Lemma \ref{intersectlem} to the inclusion $K \hookrightarrow \mathbb{P}$, yielding an isomorphism
\be \nn
\UP(\Theta^k, (\sk_{<k} \mathbb{P}) \cap K) = \UP(\Theta^k, \sk_{<k}  K),
\ee
which is to say
\be \nn
((\sk_{<k} \mathbb{P}) \cap K)[k](1-\varepsilon) = (\sk_{<k}  K)[k](1-\varepsilon).
\ee
By definition of zeroth homology,
\be \nn
H_0 K[k](1-\varepsilon) \simeq \frac{\UP(\Theta^k, K)}{ \UP(\Theta^k, \sk_{<k} K)} \simeq \frac{\UP(\Theta^k, K)}{ \UP(\Theta^k, (\sk_{<k} \mathbb{P}) \cap K)} \hookrightarrow \frac{\UP(\Theta^k, \mathbb{P}) }{\UP(\Theta^k, \sk_{<k} \mathbb{P})} \simeq H_0 \mathbb{P}[k](1-\varepsilon),
\ee
and the claim is proved.
\end{proof}
\begin{lem}\label{sgnlem}
Suppose the symmetric group $\Bij_k$ acts transitively on a set $X$, and so acts on the free vector space $\mathbb{Q} X$.  If the stabilizer of a point in $X$ lies inside the alternating group $\mathcal{A}_k \subset \Bij_k$, then the representation $\mathbb{Q} X$ contains exactly one copy of the sign representation.  Otherwise, $\mathbb{Q} X$ contains no copies of the sign representation.
\end{lem}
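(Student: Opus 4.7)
The plan is to identify $\mathbb{Q}X$ as an induced representation and then apply Frobenius reciprocity. Since $\Bij_k$ acts transitively on $X$, choosing any $x \in X$ and setting $H = \mathrm{Stab}(x) \subseteq \Bij_k$, the orbit-stabilizer correspondence gives an isomorphism of $\Bij_k$-representations
\begin{equation} \nonumber
\mathbb{Q}X \;\simeq\; \Ind_H^{\Bij_k} \mathbb{Q},
\end{equation}
where $\mathbb{Q}$ denotes the trivial representation of $H$. The multiplicity of the sign representation in $\mathbb{Q}X$ is therefore $\dim \Hom_{\Bij_k}(\mathbb{Q}X, \mathrm{sgn})$, and by Frobenius reciprocity this equals $\dim \Hom_H(\mathbb{Q}, \Res_H^{\Bij_k} \mathrm{sgn})$.

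Next I would analyze the restriction $\Res_H^{\Bij_k} \mathrm{sgn}$ in the two cases. If $H \subseteq \mathcal{A}_k$, then every element of $H$ acts by $+1$ under the sign character, so $\Res_H^{\Bij_k} \mathrm{sgn}$ is the trivial $H$-representation, and $\Hom_H(\mathbb{Q}, \mathbb{Q}) \simeq \mathbb{Q}$, giving multiplicity exactly one. If $H \not\subseteq \mathcal{A}_k$, then $H$ contains an element of sign $-1$, so the one-dimensional $H$-representation $\Res_H^{\Bij_k} \mathrm{sgn}$ is nontrivial and has no invariant vectors, yielding multiplicity zero.

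There is no real obstacle here; the lemma is essentially a direct application of Frobenius reciprocity once the induced-representation description is in place. The only minor care needed is to confirm that the multiplicity statement is independent of the choice of base point $x \in X$, which follows immediately because transitivity ensures all stabilizers are conjugate in $\Bij_k$, and being contained in $\mathcal{A}_k$ is preserved under conjugation (since $\mathcal{A}_k$ is normal in $\Bij_k$).
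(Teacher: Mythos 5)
Your proof is correct and takes essentially the same approach as the paper: identify $\mathbb{Q}X$ as $\Ind_H^{\Bij_k}\mathbb{Q}$ and apply Frobenius reciprocity to reduce to whether the restriction of the sign character to $H$ is trivial. The only cosmetic difference is that you compute $\Hom_{\Bij_k}(\mathbb{Q}X,\mathrm{sgn})$ while the paper computes $\Hom_{\Bij_k}(W_\varepsilon,\mathbb{Q}X)$; over $\mathbb{Q}$ in a semisimple setting these agree, and your added remark about independence of the base point is a harmless but unneeded extra.
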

\begin{proof}
Write $X \simeq \Bij_k/\mathcal{H}$, and let $W_{\varepsilon}$ and $\mathbb{Q}$ denote the sign representation of $\Bij_k$ and trivial representation of $\mathcal{H}$ respectively.  By Frobenius Reciprocity,
\bea \nn
\dim \Vect^{\Bij_k}( W_{\varepsilon}, \; \mathbb{Q} X) & = & \dim \Vect^{\Bij_k}( W_{\varepsilon}, \; \Ind_{\mathcal{H}}^{\Bij_k} \mathbb{Q}) \\ \nn
& = & \dim \Vect^{\mathcal{H}}( \Res_{\mathcal{H}}^{\Bij_k}W_{\varepsilon}, \; \mathbb{Q}),
\eea
which is $1$ if $\mathcal{H} \subseteq \mathcal{A}_k$, and $0$ otherwise.
\end{proof}
\begin{prop} \label{dimcount}
We compute
\bea \nn
\dim \UP(\Lambda^k, \tensorpower^n) &=& \stirling{n}{k-1} +  \stirling{n}{k} \\ \nn
\dim \UP(D_k, \tensorpower^n ) &=& \stirling{n}{k-1},
\eea
where the symbol $\stirling{n}{k}$ denotes the number of partitions of an $n$-element set into exactly $k$ subsets (a Stirling number of the second kind; see \cite{EnumCombo1}).
\end{prop}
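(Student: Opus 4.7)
The plan is to reduce each identity to a count of sign-isotypic components, using Proposition \ref{univiso} and Lemma \ref{sgnlem}.

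For $\Lambda^k$, Proposition \ref{univiso} (applied to the partition $(1^k)$) identifies $\UP(\Lambda^k, \tensorpower^n)$ with the multiplicity of the sign representation in $\tensorpower^n[k] = (\mathbb{Q}[k])^{\otimes n}$ under the diagonal $\Bij_k$-action. View $(\mathbb{Q}[k])^{\otimes n}$ as the permutation representation on functions $f: [n] \to [k]$ with $\Bij_k$ acting by post-composition, and decompose into $\Bij_k$-orbits. A function of image size $m$ has stabilizer $\Bij_{k-m}$, and by Lemma \ref{sgnlem} contributes exactly one copy of the sign representation precisely when $\Bij_{k-m} \subseteq \mathcal{A}_k$, i.e.\ when $k-m \leq 1$. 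The two contributing families are the $\stirling{n}{k}$ surjective orbits and the $\stirling{n}{k-1}$ orbits of image size $k-1$, yielding the first formula.

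For $D_k$, I would start from the defining right-exact sequence $\Lambda^{k+1} \xrightarrow{d_k} \Lambda^k \to D_k \to 0$ of Definition \ref{definedk}. Applying $\UP(-, \tensorpower^n)$ gives $\UP(D_k, \tensorpower^n) = \ker(d_k^*)$. Under the Yoneda-style identification from the first step, $d_k^*$ sends a sign-isotypic vector $\varepsilon_k \cdot f \in \varepsilon_k \tensorpower^n[k]$ to $\varepsilon_{k+1} \iota_{k,*}(f) \in \varepsilon_{k+1} \tensorpower^n[k+1]$; centrality of $\varepsilon_{k+1}$ in $\mathbb{Q}\Bij_{k+1}$ together with the fact that the sign representation of $\Bij_{k+1}$ restricts to that of $\Bij_k \hookrightarrow \Bij_{k+1}$ (via the embedding fixing $k+1$) collapses the a priori double antisymmetrization.

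Orbit analysis then finishes the job. An image-$(k-1)$ orbit in $[k]^n$ pushes forward to an image-$(k-1)$ orbit in $[k+1]^n$ whose $\Bij_{k+1}$-stabilizer $\Bij_2$ is not contained in $\mathcal{A}_{k+1}$, so $\varepsilon_{k+1}$ annihilates it. An image-$k$ orbit in $[k]^n$ pushes forward to an image-$k$ orbit in $[k+1]^n$ with trivial stabilizer, and distinct source orbits remain distinct since any $\sigma \in \Bij_{k+1}$ equating two such push-forwards must fix the unused element $k+1$ and hence lies in $\Bij_k$. Thus $d_k^*$ is injective on the image-$k$ part and zero on the image-$(k-1)$ part, so its kernel is the image-$(k-1)$ part of dimension $\stirling{n}{k-1}$. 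The only subtle step in this plan is the commutation of $\iota_{k,*}$ past the sign idempotents; once that is handled, the argument reduces to the orbit count of the first step.
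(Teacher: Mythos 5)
Your argument is correct and follows the paper's approach: for $\Lambda^k$ you reduce to a sign-multiplicity count in $\mathbb{Q}\Fin([n],[k])$ via the universal property and Lemma \ref{sgnlem}, identifying orbits with set partitions; for $D_k$ you identify $\UP(D_k, \tensorpower^n)$ with $\ker(d_k^*)$ exactly as the paper does. The only difference is that you spell out the orbit-by-orbit action of $d_k^*$ (and the simplification $\varepsilon_k\iota_k\varepsilon_{k+1} = \iota_k\varepsilon_{k+1}$) where the paper merely asserts that the kernel consists of the non-surjective functions, so your write-up fills in details rather than taking a different route.
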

\begin{proof}
By the universal property of $\Lambda^k$,
\be \nn
\dim \UP(\Lambda^k, \tensorpower^n) = \mbox{multiplicity of sign rep in the $\Bij_k$ representation $\mathbb{Q} \Fin([n],[k])$}
\ee
Applying Lemma \ref{sgnlem}, this multiplicity may be computed by studying the action of $\Bij_k$ on the finite set $\Fin([n],[k])$: it equals the number orbits whose stabilizers lie inside the alternating group $\mathcal{A}_k$.  The right stabilizer of a function to $[k]$ is the full symmetric group on the complement of the image.  In order for the stabilizer to stay in the alternating group, the complement of the image must have cardinality at most $1$.  In other words, the multiplicity equals the number of orbits consisting of functions which are either surjective, or else miss at most one point.  The orbits of $\Fin([n],[k])$ coincide with set partitions of $[n]$, and the first equality follows.  The second equality follows from the first since 
\be \nn
\UP(D_k, \tensorpower^n ) = \ker \left( \UP(\Lambda^k, \tensorpower^n) \overset{d^*}{\longrightarrow}  \UP(\Lambda^{k+1}, \tensorpower^n)  \right),
\ee
picks the non-surjective functions.
\end{proof}
\begin{prop}
Let $\mathbb{P}$ be a Schur projective of degree $k$ and $f:\Lambda^k \longrightarrow \mathbb{P}$ a map of representations.  The following conditions are equivalent:
\begin{itemize}
\item The map $f$ fails to be an injection;
\item The map $f$ factors through the canonical surjection $\Lambda^k \longrightarrow D_k$;
\item The induced map $H_0f[k]$ vanishes.
\end{itemize}
\end{prop}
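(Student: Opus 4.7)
The plan centers on the short exact sequence
\[
0 \longrightarrow D_{k+1} \longrightarrow \Lambda^k \longrightarrow D_k \longrightarrow 0,
\]
where $D_{k+1} \simeq \im(d_k) \subseteq \Lambda^k$ by Definition \ref{definedk}, combined with the universal property $\Vect^{\Fin}(\Lambda^k, \mathbb{P}) \simeq \Vect^{\Bij_k}(\Sp_{(1^k)}, \mathbb{P}[k])$ from Proposition \ref{univiso}. Write $v := f[k](\varepsilon_k) \in \mathbb{P}[k]$ for the sign-isotypic vector corresponding to $f$.

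I would establish (a) $\Leftrightarrow$ (b) first. For (b) $\Rightarrow$ (a), the kernel $D_{k+1}$ is nonzero because its degree-$(k+1)$ component receives the sign-isotypic line of $\Lambda^k[k+1]$ under $d_k$; so $\Lambda^k \twoheadrightarrow D_k$ is not injective and any $f$ factoring through it inherits this kernel. For (a) $\Rightarrow$ (b), reduce to $f \neq 0$; since $\Lambda^k[k]$ is one-dimensional and $\Lambda^k$ is generated there, $K := \ker(f)$ vanishes in degrees $\leq k$. The Pruning Proposition \ref{prune} applied to $K \hookrightarrow \Lambda^k$ shows $H_0 K$ is concentrated at $[k+1]$ and sign-isotypic, so by Lemma \ref{goodcover}, $K$ is a quotient of a direct sum of copies of $\Lambda^{k+1}$. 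Now Proposition \ref{univiso} gives that $\Vect^{\Fin}(\Lambda^{k+1}, \Lambda^k) \simeq \Lambda^k[k+1]^{sgn}$ is one-dimensional and spanned by $d_k$, so every map $\Lambda^{k+1} \to \Lambda^k$ is a scalar multiple of $d_k$, forcing $K \subseteq \im(d_k) = D_{k+1}$. Simplicity of $D_{k+1}$ (Theorem \ref{simpleclassification}) then gives $K = D_{k+1}$, and $f$ factors through $\Lambda^k/D_{k+1} = D_k$.

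For (b) $\Leftrightarrow$ (c), note (b) translates via Proposition \ref{univiso} to $\Psi(v) := \varepsilon_{k+1} \cdot (\iota_k)_*(v) = 0$ in $\mathbb{P}[k+1]$ (this is the evaluation of $f \circ d_k$ at the generator $\varepsilon_{k+1}$), while (c) reads $v \in \sk_{<k}\mathbb{P}[k]$. For (c) $\Rightarrow$ (b), writing $v$ as a combination of pushforwards $h_*(w)$ with $h : [j] \to [k]$, $j < k$, the composite $\iota_k h : [j] \to [k+1]$ misses at least $k+1-j \geq 2$ elements of $[k+1]$; the stabilizer of this composite in $\Bij_{k+1}$ contains an odd transposition of missed elements, so $\Psi$ of each summand vanishes by the principle underlying Proposition \ref{missedfactor}. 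For (b) $\Rightarrow$ (c), decompose $\mathbb{P} = \mathbb{S} W$ with $W = W_0 \oplus \cdots \oplus W_k$ and split $\mathbb{P}[k]^{sgn} = \sk_{<k}\mathbb{P}[k]^{sgn} \oplus H_0\mathbb{P}[k]^{sgn}$ using semisimplicity; the first summand already lies in $\ker\Psi$, so the task is to show $\Psi$ is injective on $H_0\mathbb{P}[k]^{sgn} \simeq W_k^{sgn}$ (using Proposition \ref{schurhomologyindegree} to identify the homology). Decomposing $W_k$ into Specht modules, only $\lambda = (1^k)$ contributes to the sign-isotypic component, yielding multiplicity copies of $\Lambda^k$ inside $\mathbb{P}$; on each such copy, $\Psi(\varepsilon_k) = \varepsilon_k \iota_k \varepsilon_{k+1} = d_k[k+1](\varepsilon_{k+1}) \neq 0$, establishing injectivity.

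The main obstacle is this last injectivity step: one must isolate the sign-isotypic contributions via Schur--Weyl decomposition and verify non-triviality of the Koszul differential on each $\Lambda^k$ summand. Everything else follows mechanically from the cited results.
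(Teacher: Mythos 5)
Your proof is correct, but it takes a genuinely different route from the paper's, so a comparison is in order.

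For the equivalence of the first two conditions, the paper simply asserts that $\Lambda^k$ has exactly one nontrivial subobject and that the corresponding quotient is $D_k$, leaving the verification to the reader (it follows from the composition series $0 \subset D_{k+1} \subset \Lambda^k$ together with the classification of simples). You instead derive this fact from scratch: assuming $f \neq 0$, you note that $K = \ker f$ vanishes in degrees $\leq k$, invoke Pruning (Proposition \ref{prune}) to conclude $H_0 K$ is concentrated in degree $k+1$ and sign-isotypic, use Lemma \ref{goodcover} to cover $K$ by copies of $\Lambda^{k+1}$, and then use the one-dimensionality of $\Vect^{\Fin}(\Lambda^{k+1}, \Lambda^k)$ together with simplicity of $D_{k+1}$ to pin down $K = D_{k+1}$. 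This is longer, but it makes explicit the structural fact that the paper invokes silently, and it showcases the squishing and homology machinery in a self-contained way.

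For the equivalence of the last two conditions, the paper builds the three-term complex $0 \to \UP(\Lambda^k, D_k, \mathbb{P}) \to \UP(\Lambda^k, \mathbb{P}) \to \Vect(H_0\Lambda^k[k], H_0\mathbb{P}[k]) \to 0$, reduces to $\mathbb{P} = \tensorpower^n$, and proves exactness by a Stirling-number dimension count (Proposition \ref{dimcount}). You instead make the linear map $\Psi(v) = (\iota_k \varepsilon_{k+1})_*(v)$ explicit under the Yoneda correspondence, show directly that $\sk_{<k}\mathbb{P}[k]^{sgn} \subseteq \ker \Psi$ via the vanishing $h \iota_k \varepsilon_{k+1} = 0$ when $h$ has small domain, and then show $\Psi$ is injective on the complement by decomposing $\mathbb{P}$ into isotype projectives and checking $\partial_k \neq 0$ on each $\Lambda^k$ summand. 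Your version is more constructive: it identifies the kernel of $\Psi$ concretely rather than matching dimensions, at the modest cost of having to choose a good complement (via the $\mathbb{P}_\lambda$ decomposition rather than abstract semisimplicity). Both approaches are valid; the paper's is shorter once Proposition \ref{dimcount} is in hand, while yours avoids the Stirling-number count entirely and is closer to an algorithmic computation.

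One small notational caution: you cite Proposition \ref{missedfactor} for the vanishing $h \iota_k \varepsilon_{k+1} = 0$, but that proposition is phrased for left multiplication on $\QFin([q],[q])$ and does not literally apply here. The underlying cancellation (a function missing two targets is annihilated by the alternating idempotent on the target) is elementary, but it would be cleaner to state it directly rather than attribute it to a result with a different setup.
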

\begin{proof}
The first two conditions are equivalent since $\Lambda^k$ has exactly one nontrivial subobject, and its quotient by that subobject is isomorphic to $D_k$. To prove the second two conditions are equivalent, build the chain complex
\be \nn
0 \longrightarrow \UP(\Lambda^k, D_k, \mathbb{P}) \longrightarrow \UP(\Lambda^k, \mathbb{P}) \longrightarrow \Vect(H_0\Lambda^k[k], H_0\mathbb{P}[k]) \longrightarrow 0,
\ee
where the first map is the natural inclusion, and the second map sends a map to its induced map on homology; the maps compose to zero since $H_0 D_k [k] = 0$.  Since $\dim \UP(\Lambda^k, D_k) = 1$, it suffices to show that this sequence is actually exact.  Every $\mathbb{P}$ is a direct sum of direct summands of functors $\tensorpower^n$ for various $n \leq k$, so it suffices to prove exactness of the chain complex
\be \nn
0 \longrightarrow \UP(\Lambda^k, D_k, \tensorpower^n) \longrightarrow \UP(\Lambda^k, \tensorpower^n) \longrightarrow \Vect(H_0\Lambda^k[k], H_0\tensorpower^n[k]) \longrightarrow 0
\ee
for every $n \leq k$.  Proposition \ref{dimcount} gives the dimensions of the first two vector spaces:
\bea \nn
\dim \UP(\Lambda^k, D_k, \tensorpower^n) &=& \stirling{n}{k-1} \\ \nn
\dim \UP(\Lambda^k, \tensorpower^n) &=& \stirling{n}{k-1} + \stirling{n}{k}.
\eea
In the case $n < k$, we see that $\stirling{n}{k} = 0$ and $H_0\tensorpower^n[k] = 0$, so the sequence is exact.  When $n = k$, we see $\dim \Vect(H_0\Lambda^k[k], H_0\tensorpower^k) = 1$ since this is the multiplicity of the sign representation in the regular representation of $\Bij_k$.  On the other hand, $\stirling{k}{k} = 1$, and so the sequence is still exact.
\end{proof}
\begin{cor} \label{wedgefactor}
Given a vector space $Z$ and a map $Z \otimes \Lambda^{k} \longrightarrow \mathbb{P}$ to a Schur projective of degree $k$, the induced map
\be \nn
H_0 \left( Z \otimes \Lambda^{k} \right) [k] \longrightarrow H_0\mathbb{P}[k]
\ee
is zero if and only if the original map factors through the canonical surjection
\be \nn
Z \otimes \Lambda^{k} \longrightarrow Z \otimes D_{k} \longrightarrow \mathbb{P}.
\ee
\end{cor}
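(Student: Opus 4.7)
The plan is to bootstrap the preceding proposition (which handles the case $Z = \mathbb{Q}$) to arbitrary finite-dimensional $Z$ by tensoring the key exact sequence produced in its proof, namely
\[
0 \longrightarrow \UP(\Lambda^k, D_k, \mathbb{P}) \longrightarrow \UP(\Lambda^k, \mathbb{P}) \longrightarrow \Vect(H_0\Lambda^k[k], H_0\mathbb{P}[k]) \longrightarrow 0,
\]
whose middle term classifies morphisms $\Lambda^k \to \mathbb{P}$, whose left-hand term picks out those factoring through $D_k$, and whose rightmost map sends $f$ to the induced map on zeroth homology in degree $k$.

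The first step is to apply the exact functor $\Vect(Z, -)$ (exact because $Z$ is finite-dimensional) to this short exact sequence. The tensor-hom adjunction of Definition \ref{tensorhom} rewrites the middle term as $\UP(Z \otimes \Lambda^k, \mathbb{P})$, and (applied once more) identifies the left-hand term with the subspace $\UP(Z \otimes \Lambda^k,\, Z \otimes D_k,\, \mathbb{P})$ of maps that factor through the canonical surjection $Z \otimes \Lambda^k \twoheadrightarrow Z \otimes D_k$. The right-hand term becomes $\Vect(Z \otimes H_0\Lambda^k[k], \, H_0\mathbb{P}[k])$.

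The second step is to identify $H_0(Z \otimes \Lambda^k)[k]$ with $Z \otimes H_0\Lambda^k[k]$. This follows because tensoring a representation by the finite-dimensional space $Z$ is an exact endofunctor of $\UP$ that commutes with $\sk_{<k}$ (it amounts to a finite direct sum of copies, which both the skeleton functor and evaluation at $[k]$ respect), and therefore commutes with the cokernel defining $H_0(-)[k]$. With this identification, the rightmost map in the tensored sequence sends an $f \colon Z \otimes \Lambda^k \to \mathbb{P}$ to precisely the induced map $H_0(Z \otimes \Lambda^k)[k] \to H_0\mathbb{P}[k]$, and exactness at the middle gives the desired equivalence.

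I expect the only real obstacle to be the bookkeeping for the naturality of these identifications: one must check that the two tensor-hom rewrites and the identification $H_0(Z \otimes \Lambda^k)[k] \simeq Z \otimes H_0\Lambda^k[k]$ intertwine correctly with the morphisms of the sequence. If this becomes cumbersome, a back-up strategy is to pick a basis $z_1, \ldots, z_n$ of $Z$, decompose $Z \otimes \Lambda^k \simeq \bigoplus_i \Lambda^k$, apply the preceding proposition componentwise to the restrictions $f_i \colon \Lambda^k \to \mathbb{P}$, and reassemble the individual factorizations through $D_k$ into a single factorization through $\bigoplus_i D_k \simeq Z \otimes D_k$; additivity of $H_0$ makes the homology condition decompose in the same way, so both characterizations agree basis-vector-by-basis-vector.
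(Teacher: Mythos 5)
Your proposal is correct, and it fills in exactly the reasoning the paper implicitly intends by labeling this a corollary of the preceding proposition. Both the abstract route (applying the exact functor $\Vect(Z,-)$ and the tensor-hom adjunction to the short exact sequence, together with the identification $H_0(Z\otimes\Lambda^k)[k]\simeq Z\otimes H_0\Lambda^k[k]$) and your stated backup route (choosing a basis of $Z$, applying the proposition to each summand $\Lambda^k$, and reassembling the factorizations through $\bigoplus_i D_k\simeq Z\otimes D_k$ using additivity of $H_0$) are sound and amount to the same elementary observation.
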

\begin{prop}
Let $\mathbb{P}$ be a Schur projective of degree $k$.  Any injection  $\Lambda^k \hookrightarrow \mathbb{P} $ is split.
\end{prop}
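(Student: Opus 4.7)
The plan is to construct a retraction $r : \mathbb{P} \to \Lambda^k$ at the level of zeroth homology and then lift it via the Schur adjunction $\mathbb{S} \dashv \mathbb{R}$. The preceding proposition, applied to the injection $f$, tells us $H_0 f[k] \neq 0$. Identifying $\Lambda^k \simeq \mathbb{S}(\mathrm{sgn}_k)$ (where $\mathrm{sgn}_k$ is the sign representation of $\Bij_k$ regarded as a $\Bij$-representation concentrated on $[k]$), Proposition \ref{schurhomologyindegree} gives $H_0 \Lambda^k[k] \simeq \mathrm{sgn}$, which is irreducible. Thus $H_0 f[k]$ is injective, and semisimplicity of $\Vect^{\Bij_k}$ supplies a retraction $\rho : H_0 \mathbb{P}[k] \to \mathrm{sgn}$.

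Next I would lift $\rho$. Write $\mathbb{P} = \mathbb{S} W$ with $W$ supported on sets of cardinality at most $k$, and decompose $W = W_{<k} \oplus W_k$ into degree-$<k$ and pure-degree-$k$ parts. Propositions \ref{schurhomology} and \ref{schurhomologyindegree} together yield $H_0 \mathbb{P}[k] \simeq W_k$: the degree-$<k$ summand contributes $0$ in degree $k$, while the pure-degree-$k$ summand contributes $W_k$. Viewing $\rho$ as a $\Bij_k$-equivariant map $W_k \to \mathrm{sgn}$, I extend it to a morphism of $\Bij$-representations $\tilde\rho : W \to \mathbb{R}\Lambda^k$; the extension is forced to vanish on $W_{<k}$ since $\mathbb{R}\Lambda^k[l] = \bigwedge^k \mathbb{Q}[l]$ is zero for $l < k$. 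The adjunction $\mathbb{S} \dashv \mathbb{R}$ then converts $\tilde\rho$ into the desired map $r : \mathbb{P} \to \Lambda^k$.

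Finally, $rf$ is an endomorphism of $\Lambda^k$, and by Proposition \ref{univiso} we have $\End(\Lambda^k) \simeq \Vect^{\Bij_k}(\mathrm{sgn}, \mathrm{sgn}) \simeq \mathbb{Q}$, so $rf = \lambda \cdot 1_{\Lambda^k}$ for some scalar $\lambda$. By functoriality of $H_0$ and the construction of $r$, one has $H_0(rf)[k] = \rho \circ H_0 f[k] = 1_{\mathrm{sgn}}$, forcing $\lambda = 1$ and hence $rf = 1_{\Lambda^k}$. The main obstacle is the careful bookkeeping in the middle paragraph---identifying $H_0 \mathbb{P}[k]$ with $W_k$ and verifying that the adjoint lift $r$ of $\tilde\rho$ really induces $\rho$ on $H_0[k]$ under this identification---but this is a routine unwinding via naturality of the unit $\eta_{\Bij}^{\Fin}$ together with Propositions \ref{schurhomology} and \ref{schurhomologyindegree} once $W$ has been decomposed by pure degree.
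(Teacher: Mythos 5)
Your argument is correct, and it takes a genuinely different route than the paper's. The paper decomposes $\mathbb{P}$ into a direct sum of isotype projectives $\mathbb{P}_{\lambda}$ and uses the dimension count of Proposition \ref{dimcount} to observe that a map $\Lambda^k \to \mathbb{P}_{\lambda}$ fails to factor through $D_k$ only when $\lambda$ is the column of height $k$; since an injection cannot factor through $D_k$, at least one component of the given injection must be a nonzero multiple of the identity onto a $\Lambda^k$ summand, and projection onto that summand splits. You instead bypass the decomposition of $\mathbb{P}$ entirely: you pass to $H_0(-)[k]$, invoke the preceding proposition to see $H_0 f[k]$ is injective, split it using semisimplicity of $\Vect^{\Bij_k}$, and lift the retraction through the $\mathbb{S} \dashv \mathbb{R}$ adjunction. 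The bookkeeping you flag at the end does go through: the identification $H_0\mathbb{S}W[k] \simeq W_k$ of Proposition \ref{schurhomologyindegree} is realized by $h \circ \eta_W[k]$, and naturality of $h$ together with $h_{\Lambda^k}[k] = 1$ gives $r[k] = H_0 r[k] \circ h_{\mathbb{P}}[k]$, whence $H_0 r[k] = \rho$. Your approach is more adjunction-driven and avoids the combinatorial inspection of isotype components; the paper's is more explicit and makes the projection onto the $\Lambda^k$ summand directly visible. Both use $\End(\Lambda^k) \simeq \mathbb{Q}$ (Proposition \ref{univiso}) at the final step.
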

\begin{proof}
Let $\lambda$ be a partition of size at most $k$.  Observe from Proposition \ref{dimcount} that all maps $\Lambda^k \longrightarrow \mathbb{P}_{\lambda}$  factor through $D_k$ unless $\lambda$ is a column of height $k$.  Writing $\mathbb{P}$ as a sum of various isotype projectives, we see that the existence of an injection
\be \nn
\Lambda^k \hookrightarrow \bigoplus_{\lambda} \mathbb{P}_{\lambda}
\ee
implies that one of the summands is actually a copy of $\Lambda^k$ and that the injection restricted to that summand is a non-zero multiple of the identity map.  It follows that the injection is split by projection onto this factor.
\end{proof}
\noindent
Before embarking on the proof of the main theorem \ref{mainB}, we provide the proofs of the minor theorems.
\subsection{Proofs of minor theorems} \label{minorproofs}
\subsubsection{Proof of Theorem \ref{upvequalsfinrep} equating uniformly presented vector spaces and finitely generated $\Fin$-representations} \label{pupvequalsfinrep}
\begin{proof}
Any imrep is finitely generated, so any uniformly presented vector space---defined as a quotient of an imrep---is as well.  Conversely, a finitely generated $\Fin$-representation $V$ is a quotient of some $\langle Y \rangle$ for $Y \in \QFin$; it remains to show that any subrepresentation of $\langle Y \rangle$ is an imrep.  By the Upper Squishing Lemma \ref{uppersquisher}, any subrepresentation $K \subseteq \langle Y \rangle$ is generated in some finite degree $k$.  Any basis for $K[0] \oplus K[1] \oplus \cdots \oplus K[k]$ supplies the required imrep.
\end{proof}
\subsubsection{Proof of Theorem \ref{fgisfl}: finitely generated implies finite length} \label{pfgisfl}
\begin{proof}
Let $V$ be a finitely generated $\Fin$-representation of degree $k$.  It follows from the upper squishing lemma that $V$ squishes any $\tensorpower^l$ through $\tensorpower^{k+1}$.  The same holds for any subobjects of $V$.  Let
\be \label{noethersubthings}
0 \subseteq V_1 \subseteq V_2 \subseteq V_3 \subseteq \cdots 
\ee
be an increasing chain of subobjects.  Applying the restriction functor $\Res_{\Fin_{\leq k}}^{\Fin}$ forces the sequence to stabilize past some number $N$:
\be \nn
\Res_{\Fin_{\leq k}}^{\Fin} V_N =  \Res_{\Fin_{\leq k}}^{\Fin} V_{N+1} =  \Res_{\Fin_{\leq k}}^{\Fin} V_{N+2} = \cdots
\ee
For any $l \leq k$ and representation $W$, the vector space $H_0W[l]$ depends only on $ \Res_{\Fin_{\leq k}}^{\Fin} W$.  It follows that
\be \nn
H_0 V_N[l] = H_0 V_{N+1}[l] = H_0 V_{N+2}[l] = \cdots
\ee
Stabilization for $l>k$ is trivial since these homology groups vanish by Proposition \ref{killhomology}. We see that the inclusions past the $N^{th}$ step induce isomorphisms on all of zeroth homology.  Proposition \ref{homreflectssurjections} tells us that the inclusions (\ref{noethersubthings}) are eventually surjective and so the sequence stabilizes.  A similar argument proves the descending chain condition.
\end{proof}

\subsubsection{Proof of Theorem \ref{simpleclassification} classifying simple representations} \label{psimpleclassification}
\begin{proof}
We begin by showing the $C_{\lambda}$ are simple representations.  The Lower Squishing Lemma \ref{lowersquisher} says that $\Theta^k$ squishes any $\tensorpower^n$ through $\tensorpower^{\leq k}$.  Since each $C_{\lambda}$ listed in the theorem is a quotient of some $\Theta^k$, these representations also squish any $\tensorpower^n$ through $\tensorpower^{\leq k}$, so it suffices to look for subrepresentations generated by vectors living in $\sk_{\leq k} C_{\lambda}[k]$.  But $\Res^{\Fin}_{\Bij_{\leq k}} C_{\lambda}$ is a simple $\Bij_{\leq k}$ module, so any such subrepresentation is $0$ or $C_{\lambda}$. \\ \\
\noindent
Similarly, the representation $D_k$ is simple: it squishes any $\tensorpower^n$ through $\tensorpower^{\leq k+1},$ and $\Res^{\Fin}_{\Bij_{\leq k+1}} D_k$ is itself simple. \\ \\
\noindent
To show that this list of simples is complete, it suffices to show that a representation $V$ is either zero or else contains a subrepresentation isomorphic to some irreducible $C_{\lambda}$ or $D_k$.  Given a non-zero representation $V$, let $k$ be the smallest natural number so that $V[k]$ is non-zero.  Any irreducible summand $W$ of the $\Bij_k$-representation $\Res^{\Fin}_{\Bij_k}V$ provides a non-zero map from $\mathbb{S}W$ sending  $\sk_{<k}\mathbb{S}W$ to zero; in other words, we have a non-zero map from $C_{\lambda}$.  In the case where $\lambda$ is not a column, this map must be an injection since $C_{\lambda}$ is irreducible.  If $\lambda$ is a column, then $C_{\lambda} \simeq \Lambda^k$ and so we either have an injection, and hence a subrepresentation isomorphic to $\Lambda^k$ (which then has $D_{k+1}$ as a subrepresentation) or else we have a non-injection, which must have image isomorphic to $D_k$.
\end{proof}
\subsubsection{Proof of Corollary \ref{polydimgivesfg} on $\Fin$-representations with polynomial growth} \label{ppolydimgivesfg}
By the classification of simple objects, any simple composition factor of an $\Fin$-representation $V$ with growth bounded by a polynomial of degree $d$ must be one of the $C_{\lambda}$ with $\lambda$ a partition with at most $d$ boxes, or else one of the objects $D_0$, $D_1$, \ldots, $D_{d+1}$.  All of these representations squish any $\tensorpower^n$ through $\tensorpower^{\leq d + 1}$, so $V$---an extension of such representations---also squishes any $\tensorpower^n$ through $\tensorpower^{\leq d + 1}$.  It follows that $\sk_{d+1} V \simeq V$.  The more precise version follows by a similar argument using the Lower Squishing Lemma \ref{lowersquisher}.
\subsubsection{Proof of Corollary \ref{tracestats} on polynomiality of characters} \label{ptracestats}
\begin{proof}
It suffices to check the claim on a basis for $K$-theory.  Symmetric function theory (see \cite{EnumCombo2}) gives the result for all Schur projectives $\mathbb{P}_{\lambda}$, using power sums.  The only remaining representation to check is $D_0$, but that functor is supported on the empty set.
\end{proof}
\subsubsection{Proof of dimension formula \ref{clambdadim} for the simple representations $C_{\lambda}$} \label{pclambdadim}
\begin{proof}
Since $C_{\lambda} [0] \simeq 0$, $ C_{\lambda}$ has no composition factor isomorphic to $D_0$.  It follows that the dimension of $ C_{\lambda} [n]$ is exactly polynomial in $n$, necessarily polynomial of degree at most $k$.  But there is only one polynomial of degree $k$ vanishing at $0, 1, 2, \ldots, k-1$ and taking the value $(\dim \Sp_{\lambda})$ at $k$.
\end{proof}

\subsection{The main result: refined statement}
\begin{thm}[Version B] \label{mainB}
Any finitely generated $\Fin$-representation $V$ of degree $k$ has a finite resolution
\be \nn
0 \longrightarrow \mathbb{P}_k \oplus \mathbb{D}_k \longrightarrow \cdots \longrightarrow \mathbb{P}_2 \oplus \mathbb{D}_2 \longrightarrow  \mathbb{P}_1 \oplus \mathbb{D}_1 \longrightarrow  \mathbb{P}_0 \longrightarrow V \longrightarrow 0
\ee
where each $\mathbb{P}_i$ is a Schur projective of degree $k-i$ and each $\mathbb{D}_i$ is a direct sum of copies of the objects $D_{k-i+1}$ and $D_{k-i+2}$.
\end{thm}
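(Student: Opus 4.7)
The plan is to proceed by induction on the degree $k$. The base case $k \leq 0$ is essentially trivial: a representation of degree zero is a quotient of a constant functor, so up to minor care for $D_0$-like summands it is itself a Schur projective $\mathbb{P}_0$, giving the length-zero resolution $0 \to \mathbb{P}_0 \to V \to 0$. For the inductive step, I would start by using Lemma \ref{goodcover} to build the degree-$k$ Schur projective cover $\mathbb{P}_0 = \mathbb{S} H_0 V \twoheadrightarrow V$, whose induced map on zeroth homology in degree $k$ is an isomorphism, and let $K$ denote its kernel.

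Next I would analyze the zeroth homology of $K$. Because $K$ is a subrepresentation of a Schur projective of degree $k$, Proposition \ref{prune} already gives that $H_0 K$ vanishes above degree $k+1$ and that $H_0 K[k+1]$ lies in the sign isotype. Since $\mathbb{P}_0 \to V$ is an isomorphism on $H_0[k]$, right exactness of $H_0$ forces the composite $H_0 K[k] \to H_0 \mathbb{P}_0[k]$ to be zero; combining this with Proposition \ref{shapelem} (whose assertion is that the $(1-\varepsilon)$-part of $H_0 K[k]$ injects into that of $H_0 \mathbb{P}_0[k]$), I would conclude that $H_0 K[k]$ also lies entirely in the sign isotype. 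Thus $H_0 K$ is supported in degrees at most $k+1$, with the top two degrees purely sign-isotypic.

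With this structure in hand, I would construct a cover $\mathbb{P}_1 \oplus \mathbb{D}_1 \twoheadrightarrow K$ where $\mathbb{P}_1$ is a Schur projective of degree $k-1$ absorbing the lower-degree part of $H_0 K$, while $\mathbb{D}_1$ consists of one copy of $D_k$ (respectively $D_{k+1}$) for each sign-isotypic summand of $H_0 K[k]$ (respectively $H_0 K[k+1]$). The replacement of $\Lambda^k$-generators by $D_k$-generators is justified by Corollary \ref{wedgefactor}: each composite $\Lambda^k \to K \hookrightarrow \mathbb{P}_0$ induces zero on $H_0[k]$ and hence factors through $\Lambda^k \twoheadrightarrow D_k$; since $\Lambda^k \to D_k$ is epic and the composite lands in $K$, the factored map $D_k \to \mathbb{P}_0$ lands inside $K$ as well, so the original map factors as $\Lambda^k \to D_k \to K$. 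The analogous argument for $\Lambda^{k+1}$ uses that $H_0 \mathbb{P}_0[k+1] = 0$.

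Finally, I would apply the inductive hypothesis to the reduced quotient $\bar{K} := K / \mathbb{D}_1$, which has degree at most $k-1$ by construction, and splice together the resulting length-$(k-1)$ resolution of $\bar{K}$, the short exact sequence $0 \to \mathbb{D}_1 \to K \to \bar{K} \to 0$, and the initial cover $\mathbb{P}_0 \twoheadrightarrow V$ into a resolution of $V$ of the required shape and length. The main obstacle is the splicing step: one must choose the $D_k$- and $D_{k+1}$-summands of $\mathbb{D}_1$ minimally so that $\mathbb{D}_1 \hookrightarrow K$ is an honest injection (making $\bar{K}$ really have degree $\leq k-1$), and this injectivity in turn requires the chosen lifts to be linearly independent in $H_0 K$, using simplicity of the $D_j$ from Theorem \ref{simpleclassification}. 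Once that bookkeeping is carried out, the degrees of the $\mathbb{P}_i$ and the indices of the $D_j$ in $\mathbb{D}_i$ shift correctly at each inductive step, yielding the refined resolution of Theorem \ref{mainB}.
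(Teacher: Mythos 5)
Your proposal matches the paper's own proof essentially step for step: induct on degree, take the Schur cover from Lemma \ref{goodcover}, use Propositions \ref{prune} and \ref{shapelem} to force $H_0K$ of the kernel to be sign-isotypic in its top two degrees, factor the $\Lambda$-generators through $D$-objects via Corollary \ref{wedgefactor}, quotient out the resulting $D$-summands to drop the degree, apply the inductive hypothesis, and splice. The only point where the paper is more explicit than you are is the injectivity of $\mathbb{D}_1 \hookrightarrow K$, which it proves via a careful homology computation (and even flags as optional in a footnote); your shorter argument via semisimplicity of $(D_l)^{\oplus m}$ and a basis of $H_0K[l]$ is a correct abbreviation of the same idea.
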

\subsection{The main result: proof}
\begin{proof}
For the purpose of induction, suppose we have the result for all uniformly presented vector spaces of degrees up to $k$.  Given a uniformly presented vector space $V$ of degree $k + 1$ we may\footnote{Here we use that $\Vect^{\Bij}$ is semisimple and so every surjection is (non-canonically) split.} use Lemma \ref{goodcover} to produce a (non-canonical) surjection
\be \nn
\mathbb{S} H_0 V \longrightarrow V \longrightarrow 0
\ee
so that the induced map
\be \label{finaliso}
0 \longrightarrow H_0 \mathbb{S} H_0 V[k+1] \longrightarrow H_0V[k+1] \longrightarrow 0 
\ee
is an isomorphism.
Let $K$ denote the kernel
\be \label{kdef}
0 \longrightarrow K \longrightarrow \mathbb{S} H_0 V \longrightarrow V \longrightarrow 0.
\ee
Applying $H_0$ to this short exact sequence evaluated at the object $[k+1]$ gives a final isomorphism by (\ref{finaliso}):
\be \label{lesksv}
 H_0K[k+1] \longrightarrow H_0 \mathbb{S} H_0 V[k+1] \overset{\sim}{\longrightarrow} H_0V[k+1] \longrightarrow 0.
\ee
It follows that the penultimate map is a zero map.  Projecting away from the $S_{k+1}$ anti-invariants, the exact sequence becomes
\be \nn
H_0K[k+1](1-\varepsilon) \overset{0}{\longrightarrow} H_0 \mathbb{S} H_0 V[k+1](1-\varepsilon) \overset{\sim}{\longrightarrow} H_0V[k+1](1-\varepsilon) \longrightarrow 0.
\ee
By Proposition \ref{shapelem} we know that the map marked zero is an injection.  It follows that $H_0K[k+1](1-\varepsilon) = 0$.  This fact, along with the consequences of pruning \ref{prune} are tabulated here:
\bea \nn
\vdots \hspace{.2in} & \simeq & 0 \\ \nn
H_0K[k+4] & \simeq & 0 \\ \nn
H_0K[k+3] & \simeq & 0 \\
H_0K[k+2] & \simeq & H_0K[k+2]\varepsilon \label{k2homology} \\
H_0K[k+1] & \simeq & H_0K[k+1]\varepsilon. \label{k1homology}
\eea
In what follows, set $l = k+2$ or $l=k+1$.  The argument proceeds in parallel in these two degrees.    Picking representatives for homology gives
\be \nn
H_0K[l] \otimes \Lambda^{l} \longrightarrow K,
\ee
which may be extended by the canonical inclusion $K \hookrightarrow \mathbb{S}H_0 V$
\be \nn
H_0K[l] \otimes \Lambda^{l} \longrightarrow K\longrightarrow \mathbb{S}H_0 V.
\ee
Applying the functor $H_0$ and evaluating at the set $[l]$,
\be \nn
H_0(H_0K[l] \otimes \Lambda^{l})[l] \longrightarrow H_0K[l]\longrightarrow H_0\mathbb{S}H_0 V[l].
\ee
For either value of $l$, this composite is zero: for $l=k+2$, $H_0\mathbb{S}H_0 V[l]$ is zero by Proposition \ref{schurhomology}; for $l=k+1$, the second map is zero since it precedes an isomorphism in the exact sequence (\ref{lesksv}).  By Proposition \ref{wedgefactor}, these two composites factor:
\be \nn
\begin{xy}
(0,20)*+{H_0K[l] \hspace{-3pt} \otimes \hspace{-3pt} \Lambda^{l}}="a"; (45,20)*+{K}="b"; 
(0,0)*+{H_0K[l] \hspace{-3pt} \otimes \hspace{-3pt} D_{l}}="c"; (45,0)*+{\mathbb{S}H_0V.}="d"; 
{\ar "a"; "b"};
{\ar "b"; "d"};
{\ar "a"; "c"};
{\ar "c"; "d"};
\end{xy}
\ee
The left map is epic and the right map is monic, so we get a unique lift
\be \label{liftsquare}
\begin{xy}
(0,20)*+{H_0K[l] \hspace{-3pt} \otimes \hspace{-3pt} \Lambda^{l}}="a"; (45,20)*+{K}="b"; 
(0,0)*+{H_0K[l] \hspace{-3pt} \otimes \hspace{-3pt} D_{l}}="c"; (45,0)*+{\mathbb{S}H_0V.}="d"; 
{\ar "a"; "b"};
{\ar "b"; "d"};
{\ar "a"; "c"};
{\ar "c"; "d"};
{\ar "c"; "b"};
\end{xy}
\ee
Taking homology and evaluating at $[l]$,
\be \nn
\begin{xy}
(0,20)*+{H_0(\scalebox{1}{$H_0K[l] \hspace{-3pt} \otimes \hspace{-3pt} \Lambda^{l}$})[l]}="a"; (45,20)*+{H_0(\scalebox{1}{$K$})[l]}="b"; 
(0,0)*+{H_0(\scalebox{1}{$H_0K[l] \hspace{-3pt} \otimes \hspace{-3pt} D_{l}$})[l]}="c"; (45,0)*+{H_0(\scalebox{1}{$\mathbb{S}H_0V$})[l].}="d"; 
{\ar "a"; "b"};
{\ar "b"; "d"};
{\ar "a"; "c"};
{\ar "c"; "d"};
{\ar "c"; "b"};
\end{xy}
\ee
By Lemma \ref{goodcover} and the previous computations (\ref{k2homology}) and (\ref{k1homology}), the top map is an isomorphism.  The left map is still surjective by the right exactness of zeroth homology.  It follows that the diagonal map is also an isomorphism:
\be \label{diagiso}
H_0(\scalebox{1}{$H_0K[l] \hspace{-3pt} \otimes \hspace{-3pt} D_{l}$})[l] \overset{\sim}{\longrightarrow} H_0(\scalebox{1}{$K$})[l].
\ee
\\ 
\\ \noindent
We need to establish that the diagonal lifting map from (\ref{liftsquare})
\bea \label{lifters}
H_0K[l] \otimes D_{l} \longrightarrow K
\eea
is an injection\footnote{At this stage, there are several fast ways to finish proof of the main theorem without showing this map is an injection.   We favor a style that gives a more practical algorithm, however.}.  Define the kernel
\be \nn
0 \longrightarrow N \longrightarrow H_0K[l] \otimes D_{l} \longrightarrow K.
\ee
Choosing a basis for $H_0K[l]$ we see that $H_0K[l] \otimes D_{l}$ has exactly $\dim H_0K[l]$ composition factors all of which are isomorphic to the simple object $D_{l}$.  Since $\Ext^1(D_l, D_l)=0$, the inclusion of $N$ is split and so the identity map on $N$ factors
\be \nn
N \longrightarrow H_0K[l] \otimes D_{l} \longrightarrow N.
\ee 
Applying the functor $H_0$ and evaluating at $[l]$, we have a corresponding factorization of the identity map on $H_0N[l]$:
\be \nn
H_0N[l] \longrightarrow H_0(H_0K[l] \otimes D_{l})[l] \longrightarrow H_0N[l].
\ee
It follows that this first map is monic.  Let $M$ be the cokernel of the natural inclusion $N \longrightarrow H_0K[l] \otimes D_{l}$:
\be \label{nmles}
0 \longrightarrow N \longrightarrow H_0K[l] \otimes D_{l} \longrightarrow M \longrightarrow 0.
\ee
By (\ref{diagiso}), we know that the composite
\be \nn
H_0(H_0K[l] \otimes D_{l})[l] \longrightarrow H_0N[l] \longrightarrow H_0K[l]
\ee
is an isomorphism.  It follows that this first map is monic as well.  We get that the exact sequence associated to (\ref{nmles}) ends with an isomorphism
\be \nn
H_0N[l] \longrightarrow H_0(H_0K[l] \otimes D_{l})[l] \overset{\sim}{\longrightarrow} H_0M[l] \longrightarrow 0.
\ee
The map $H_0N[l] \longrightarrow H_0(H_0K[l] \otimes D_{l})[l]$ (which is now seen to be a zero map) was earlier found to be monic.  It follows that
\be \nn
H_0N[l] = 0.
\ee
But $H_0D_l[l] \simeq \mathbb{Q}$ and so
\be \nn
N = 0.
\ee \\
\\ \noindent
We have shown that the maps (\ref{lifters}) are injective.  In fact, their direct sum
\be \nn
\left(
\begin{array}{c}
H_0K[k+2] \otimes D_{k+2}\\
\oplus \\
H_0K[k+1] \otimes D_{k+1}\\
\end{array}
\right) \longrightarrow K
\ee
remains an injection since their images can share no composition factor.
Define $V'$ to fit in the short exact sequence
\be \nn
0 \longrightarrow \left(
\begin{array}{c}
H_0K[k+2] \otimes D_{k+2}\\
\oplus \\
H_0K[k+1] \otimes D_{k+1}\\
\end{array}
\right) \longrightarrow K \longrightarrow V' \longrightarrow 0.
\ee
The associated exact sequences
\be \nn
H_0\left(
\begin{array}{c}
H_0K[k+2] \otimes D_{k+2}\\
\oplus \\
H_0K[k+1] \otimes D_{k+1}\\
\end{array}
\right)[k+2] {\longrightarrow} H_0K[k+2] \longrightarrow H_0V'[k+2] \longrightarrow 0
\ee
\vspace{5pt}
\be \nn
H_0\left(
\begin{array}{c}
H_0K[k+2] \otimes D_{k+2}\\
\oplus \\
H_0K[k+1] \otimes D_{k+1}\\
\end{array}
\right)[k+1] {\longrightarrow} H_0K[k+1] \longrightarrow H_0V'[k+1] \longrightarrow 0
\ee
have penultimate surjections by (\ref{diagiso}).  Consequently,
\bea \nn
H_0V'[k+2] = 0\phantom{.} \\ \nn
H_0V'[k+1] = 0.
\eea
It follows that the zeroth homology of $V'$ is supported in degrees $k$ and below and so $V'$ has degree $k$.  The inductive hypothesis supplies a resolution of the required form
\be \nn
\cdots \longrightarrow R_2 \longrightarrow R_1 \longrightarrow R_0 \longrightarrow V' \longrightarrow 0.
\ee
Build the diagram
\be \nn
\begin{xy}
(0,60)*+{0}="y3";
(0,40)*+{0}="y2";
(0,20)*+{0}="y1";
(0,0)*+{0}="y0";
(30,80)*+{\vdots}="a4";
(30,60)*+{0}="a3";
(30,40)*+{0}="a2";
(30,20)*+{\scalebox{.65}{$\left(
\begin{array}{c}
H_0K[k+2] \hspace{-2.5pt} \otimes \hspace{-2.5pt} D_{k+2}\\
\oplus \\
H_0K[k+1] \hspace{-2.5pt} \otimes \hspace{-2.5pt} D_{k+1}\\
\end{array}\right)$}}="a1";
(30,0)*+{\scalebox{.65}{$\left(
\begin{array}{c}
H_0K[k+2] \hspace{-2.5pt} \otimes \hspace{-2.5pt} D_{k+2}\\
\oplus \\
H_0K[k+1] \hspace{-2.5pt} \otimes \hspace{-2.5pt} D_{k+1}\\
\end{array}\right)$}}="a0";
(30,-20)*+{0}="am";
(70,80)*+{\vdots}="b4";
(70,60)*+{R_2}="b3";
(70,40)*+{R_1}="b2";
(70,20)*+{\scalebox{.65}{$\left(
\begin{array}{c}
H_0K[k+2] \hspace{-2.5pt} \otimes \hspace{-2.5pt} D_{k+2}\\
\oplus \\
H_0K[k+1] \hspace{-2.5pt} \otimes \hspace{-2.5pt} D_{k+1}\\
\end{array}\right)$}
 \oplus R_0}="b1";
(70,0)*+{K}="b0";
(70,-20)*+{0}="bm";
(110,80)*+{\vdots}="c4";
(110,60)*+{R_2}="c3";
(110,40)*+{R_1}="c2";
(110,20)*+{R_0}="c1";
(110,0)*+{V'}="c0";
(110,-20)*+{0.}="cm";
(140,60)*+{0}="z3";
(140,40)*+{0}="z2";
(140,20)*+{0}="z1";
(140,0)*+{0}="z0";
{\ar "a4"; "a3"};
{\ar "b4"; "b3"};
{\ar "c4"; "c3"};
{\ar "a3"; "a2"};
{\ar "b3"; "b2"};
{\ar "c3"; "c2"};
{\ar "a2"; "a1"};
{\ar "b2"; "b1"};
{\ar "c2"; "c1"};
{\ar "a1"; "a0"};
{\ar "b1"; "b0"};
{\ar "c1"; "c0"};
{\ar "a0"; "am"};
{\ar "b0"; "bm"};
{\ar "c0"; "cm"};
{\ar "y3"; "a3"};
{\ar "a3"; "b3"};
{\ar "b3"; "c3"};
{\ar "c3"; "z3"};
{\ar "y2"; "a2"};
{\ar "a2"; "b2"};
{\ar "b2"; "c2"};
{\ar "c2"; "z2"};
{\ar "y1"; "a1"};
{\ar "a1"; "b1"};
{\ar "b1"; "c1"};
{\ar "c1"; "z1"};
{\ar "y0"; "a0"};
{\ar "a0"; "b0"};
{\ar "b0"; "c0"};
{\ar "c0"; "z0"};
{\ar "c1"; "b0"};
\end{xy}
\ee
All the rows but the last are constructed to be split short exact sequences.  The only map that requires explanation is the diagonal map, which exists by projectivity of $R_0$.  The outer columns are exact, so the middle is as well by the long exact sequence associated to this short exact sequence of chain complexes.  Splicing the middle column to the short exact sequence (\ref{kdef}) gives
\be \nn
\cdots \longrightarrow R_2 \longrightarrow R_1 \longrightarrow \begin{array}{c}
\scalebox{.65}{$\left(
\begin{array}{c}
H_0K[k+2] \hspace{-2.5pt} \otimes \hspace{-2.5pt} D_{k+2}\\
\oplus \\
H_0K[k+1] \hspace{-2.5pt} \otimes \hspace{-2.5pt} D_{k+1}\\
\end{array}\right)$} \\
\oplus \\
R_0
\end{array} \longrightarrow \mathbb{S}H_0V \longrightarrow V \longrightarrow 0,
\ee
which is a resolution of $V$ the advertized form.
 \end{proof}
\section{Explicit resolution of $V$ from the introduction}
As an example of the algorithm outlined in the proof of Theorem \ref{mainB}, we resolve the uniformly presented vector space given in (\ref{origw}).  This example is among the smallest exhibiting all of the qualitative aspects of the main result.  Recall that
\be \nn
V = \langle x_i \otimes x_j \rangle \; \big / \; \big \langle x_i \otimes x_i + x_j \otimes x_j + x_k \otimes x_k - 3\left(x_i \otimes x_j - x_i \otimes x_k + x_j \otimes x_k \right) \big \rangle,
\ee
and that $V$ can be written as a uniformly presented vector space
\be \nn
V=
\begin{amatrixarrow}[0ex]{[2]}{[2]}
   \; \fn{12} \;
 \end{amatrixarrow} \; \big / \;
 \begin{amatrixarrow}[0ex]{[2]}{[3]}
    \; \fn{11} + \fn{22} + \fn{33} -3 \cdot \fn{12} + 3 \cdot \fn{13} -3 \cdot \fn{23}  \;
 \end{amatrixarrow}.
\ee
Since $\begin{amatrixarrow}[0ex]{[2]}{[2]}
   \; \fn{12} \;
 \end{amatrixarrow} \simeq \tensorpower^2 \simeq \mathbb{P}_{\ydiagram{1,1}} \oplus \mathbb{P}_{\ydiagram{2}}$, we may take the first step of the resolution to be
 \be \nn
 \cdots \longrightarrow \mathbb{P}_{\ydiagram{1,1}} \oplus \mathbb{P}_{\ydiagram{2}} \longrightarrow V \longrightarrow 0
 \ee
 before beginning the algorithm in earnest.  The kernel of this surjection is none other than the denominator of $V$:
 \be \nn
 I = \begin{amatrixarrow}[0ex]{[2]}{[3]}
    \; \fn{11} + \fn{22} + \fn{33} -3 \cdot \fn{12} + 3 \cdot \fn{13} -3 \cdot \fn{23}  \;
 \end{amatrixarrow}.
 \ee
 \subsection{Computing zeroth homology of an imrep}
 We must compute the zeroth homology of the imrep $I$.  Since $I$ is a quotient of $\tensorpower^3$, its zeroth homology is supported on $\Bij_{\leq 3}$.  Starting at the bottom, $I[0]$ and $I[1]$ both vanish, and so
 \bea \nn
 H_0 I [0] &=& 0 \\ \nn
 H_0 I [1] &=&  0 \\ \nn
 H_0 I [2] &\simeq& I[2].
 \eea
By definition, $I[2]$ is the composition
 \be \nn
( \fn{11} + \fn{22} + \fn{33} -3 \cdot \fn{12} + 3 \cdot \fn{13} -3 \cdot \fn{23}) \; \QFin([3], [2]).
 \ee
Taking the span of the $8$ possible substitutions
\be \nn
\langle \;
\scalebox{.7}{$ \fn{22}-\fn{11}$} \;, \; \; \; 
 \scalebox{.7}{$5 \cdot \fn{11}-3 \cdot \fn{12}-3 \cdot \fn{21}+\fn{22}$} \;, \; \; \; 
 \scalebox{.7}{$\fn{11}-\fn{22}$} \;, \; \; \; 
 \scalebox{.7}{$\fn{22}-\fn{11}$} \;, \; \; \; 
 \scalebox{.7}{$\fn{11}-3 \cdot \fn{12}-3\cdot  \fn{21}+5 \cdot \fn{22}$} \;, \; \; \; 
 \scalebox{.7}{$\fn{11}-\fn{22}$} \;, \; \; \; 
 0 \;
 \rangle,
\ee
we compute that
\be \nn
I[2] = \langle \; \fn{11}-\fn{22}\; , \; \; \; \; \fn{12}+\fn{21}-2 \cdot \fn{22} \; \rangle,
\ee
a $2$-dimensional vector space.  To compute the isotypic components under the $\Bij_2$ action, postcompose each generator with $\tau_2$ and $\varepsilon_2$:
\bea \nn
I[2]\tau_2 & = & \langle \; 0 \; , \; \; \; \; -\fn{11} + \fn{12} + \fn{21} - \fn{22} \; \rangle  \\ \nn
I[2]\varepsilon_2  & = & \langle \; \fn{11}-\fn{22} \; , \; \; \; \; \fn{11}-\fn{22} \; \rangle.
\eea
Simplifying, we have representatives for zeroth homology evaluated at $[2]$:
\bea \nn
H_0 K [2] \tau_2 & \simeq & \langle \; -\fn{11} + \fn{12} + \fn{21} - \fn{22} \; \rangle \\ \nn
H_0 K [2] \varepsilon_2 & \simeq & \langle \; \fn{11}-\fn{22} \; \rangle.
\eea
By definition, $H_0 I [3] =$
\be \nn
\scalebox{.7}{$( \fn{11} + \fn{22} + \fn{33} -3 \cdot \fn{12} + 3 \cdot \fn{13} -3 \cdot \fn{23})$} \; \QFin([3], [3]) \; \big / \; \scalebox{.7}{$( \fn{11} + \fn{22} + \fn{33} -3 \cdot \fn{12} + 3 \cdot \fn{13} -3 \cdot \fn{23})$} \; \QFin([3], [2], [3]).
\ee
Row reduction shows this space to be one-dimensional, and provides a representative
\be \nn
H_0 I [3] \simeq \langle \; \scalebox{.9}{$\frac{1}{6}$} \left( \fn{1 2} - \fn{1 3} - \fn{2 1} + \fn{2 3} + \fn{3 1} - \fn{3 2} \right) \; \rangle
\ee
which is easily seen to be anti-invariant under the action of $\Bij_3$.  Our choices of homology representatives constitute a splitting of the homology map, so we obtain the next step of the resolution
\be \nn
\cdots \longrightarrow \mathbb{P}_{\ydiagram{1,1,1}} \oplus \mathbb{P}_{\ydiagram{1,1}} \oplus \mathbb{P}_{\ydiagram{2}} \longrightarrow \mathbb{P}_{\ydiagram{1,1}} \oplus \mathbb{P}_{\ydiagram{2}} \longrightarrow V \longrightarrow 0.
\ee
\subsection{Computing the kernel of a map of Schur projectives}
Let $K$ be the kernel of the surjection onto $I$.  We know $K$ is an imrep, but we do not have its generators.  Since $K$ is the kernel of a surjection provided by Lemma \ref{goodcover}, we know $K$ is generated by its vectors in $K[0]$, $K[1]$, $K[2]$, $K[3]\varepsilon$, and $K[4]\varepsilon$.  Pick bases for the ambient vector spaces:
\bea \nn
\left(\mathbb{P}_{\ydiagram{1,1,1}} \oplus \mathbb{P}_{\ydiagram{1,1}} \oplus \mathbb{P}_{\ydiagram{2}}\right)[0] & = & \left[ \phantom{ \begin{array}{c}
\,  \\
\,  \\
\,
\end{array}} \right] \\ \nn
\left(\mathbb{P}_{\ydiagram{1,1,1}} \oplus \mathbb{P}_{\ydiagram{1,1}} \oplus \mathbb{P}_{\ydiagram{2}}\right)[1] & = & \left[  \begin{array}{c}
 0 \\
 0 \\
 \scalebox{.7}{$\fn{11}$}
\end{array}\right] \\ \nn
\left(\mathbb{P}_{\ydiagram{1,1,1}} \oplus \mathbb{P}_{\ydiagram{1,1}} \oplus \mathbb{P}_{\ydiagram{2}}\right)[2] & = & \left[  \begin{array}{cccc}
 0 & 0 & 0 & 0 \\ 
 \sm{\fn{12}-\fn{21}} & 0 & 0 & 0 \\
 0 & \sm{\fn{11}} & \sm{ \fn{12} + \fn{21}} & \sm{\fn{22}}
\end{array}\right] \\ \nn
\left(\mathbb{P}_{\ydiagram{1,1,1}} \oplus \mathbb{P}_{\ydiagram{1,1}} \oplus \mathbb{P}_{\ydiagram{2}}\right)[3]\varepsilon & = & \left[
\begin{array}{cc}
 \sm{\fn{123}-\fn{132}-\fn{213}+\fn{231}+\fn{312}-\fn{321}} & 0 \\
 0 & \sm{\fn{12}-\fn{13}-\fn{21}+\fn{23}+\fn{31}-\fn{32}} \\
 0 & 0 \\
\end{array}
\right] \nn \\
\left(\mathbb{P}_{\ydiagram{1,1,1}} \oplus \mathbb{P}_{\ydiagram{1,1}} \oplus \mathbb{P}_{\ydiagram{2}}\right)[4]\varepsilon & = & \left[ \hspace{-15pt}
\begin{array}{c}
  \begin{array}{c}
 \sm{  \; \fn{123}-\fn{124}-\fn{132}+\fn{134}+\fn{142}-\fn{143}-\fn{213}+\fn{214}+\fn{231}-\fn{234}-\fn{241}+\fn{243}} \vspace{-5pt} \\
 \sm{  \hspace{25pt} +\fn{312}-\fn{314}-\fn{321}+\fn{324}+\fn{341}-\fn{342}-\fn{412}+\fn{413}+\fn{421}-\fn{423}-\fn{431}+\fn{432} \;  } 
\end{array} \\
 0  \\
 0
\end{array} \hspace{-5pt}
\right]. \nn
\eea
The map to $\mathbb{P}_{\ydiagram{1,1}} \oplus \mathbb{P}_{\ydiagram{2}}$ is premultiplication by the matrix
\be \nn
\left[ \hspace{-15pt}
\begin{array}{ccc}
 \begin{array}{c}
 \sm{\frac{1}{6} \left( \; \fn{12}-\fn{13}-\fn{21} \right.} \vspace{-5pt} \\
 \sm{ \left. \hspace{25pt} +\fn{23}+\fn{31}-\fn{32} \; \right) }
\end{array} & 0 & 0\\
 0 & \fn{22}-\fn{11} & \;\;\;-\fn{11}+\fn{12}+\fn{21}-\fn{22} \\
\end{array}
\right].
\ee
The basis vectors map to
\bea \nn
&& \left[ \hspace{-10pt}
\phantom{\begin{array}{c}
 0 \\
 0
\end{array}}
\right]
 \nn \\
&& \left[ 
\begin{array}{c}
 0 \\
 0
\end{array}
\right]
 \nn \\
&&\left[
\begin{array}{cccc}
 0 & 0 & 0 & 0 \\
 \sm{2 \left( \fn{22}- \fn{11} \right)} & 0 & \sm{2(-\fn{11}+\fn{12}+\fn{21}-\fn{22})} & 0 \\
\end{array}
\right]
\nn \\
&& \left[ \hspace{-10pt}
\begin{array}{cc}
\begin{array}{c}
 \sm{  \fn{12}-\fn{13}-\fn{21} } \vspace{-5pt} \\
 \sm{ \; \; +\fn{23}+\fn{31}-\fn{32} \; }
\end{array} & 0 \\
 0 & 0 \\
\end{array}
\right]
\nn \\
&&\left[ 
\begin{array}{c}
 0 \\
 0
\end{array}
\right]. \nn
\eea
Typically, the next step is to append identity matrices to these matrices; then, column reduction computes the kernel.  In this case, however, the non-zero columns are linearly independent by inspection.  Thus,
\be \nn
K= \raisebox{-15pt}{\scalebox{2}[5]{$\langle$}} \hspace{-20pt}
\begin{array}{ccccc}
  \begin{array}{c}
 \sm{  \; \fn{123}-\fn{124}-\fn{132}+\fn{134}+\fn{142}-\fn{143}-\fn{213}+\fn{214}+\fn{231}-\fn{234}-\fn{241}+\fn{243}} \vspace{-5pt} \\
 \sm{  \hspace{25pt} +\fn{312}-\fn{314}-\fn{321}+\fn{324}+\fn{341}-\fn{342}-\fn{412}+\fn{413}+\fn{421}-\fn{423}-\fn{431}+\fn{432} \;  } 
\end{array} & 0 & 0 & 0 & 0 \\
 0 & \begin{array}{c}
 \sm{\fn{12}-\fn{13}-\fn{21} } \vspace{-5pt} \\
 \sm{ \hspace{15pt} +\fn{23}+\fn{31}-\fn{32} \; }
\end{array} & 0 & 0 & 0 \\
 0 & 0 & \sm{\fn{11}} & \sm{\fn{22}} & \sm{\fn{11}}
\end{array} \hspace{0pt} \raisebox{-15pt}{\scalebox{2}[5]{$\rangle$}}
\ee
as an imrep.  The block structure of this matrix gives a direct sum decomposition
\be \nn
K \simeq D_4 \oplus D_3 \oplus \tensorpower^1.
\ee
The resulting resolution of $V$ is 
\be \nn
0 \longrightarrow  D_4 \oplus D_3 \oplus \mathbb{P}_{\ydiagram{1}} \longrightarrow \mathbb{P}_{\ydiagram{1,1,1}} \oplus \mathbb{P}_{\ydiagram{1,1}} \oplus \mathbb{P}_{\ydiagram{2}} \longrightarrow \mathbb{P}_{\ydiagram{1,1}} \oplus \mathbb{P}_{\ydiagram{2}} \longrightarrow V \longrightarrow 0,
\ee
From which the dimension formula $\dim V[n] = n$ can be derived.  Koszul complexes expand this out to a resolution by Schur projectives:
\be \nn
\begin{xy}
(15,0)*+{0}="final zero"; (0,0)*+{V}="space";
(-20,0)*+{\mathbb{P}_{\ydiagram{1,1}} \oplus \mathbb{P}_{\ydiagram{2}}}="given cover";
(-67,0)*+{\mathbb{P}_{\ydiagram{1,1}} \oplus \mathbb{P}_{\ydiagram{2}}}="step 1 deg 2";
(-67,20)*+{\mathbb{P}_{\ydiagram{1,1,1}}}="step 1 deg 3";
(-113,-20)*+{\mathbb{P}_{\ydiagram{1}}}="step 2 deg 1";
(-113,20)*+{\mathbb{P}_{\ydiagram{1,1,1}}}="step 2 deg 3";
(-113,40)*+{\mathbb{P}_{\ydiagram{1,1,1,1}}}="step 2 deg 4";
(-130,27)*+{\raisebox{6pt}{\rotatebox{10}{$\ddots$}}}="step 3 deg 4";
(-130,47)*+{\raisebox{6pt}{\rotatebox{10}{$\ddots$}}}="step 3 deg 5";
{\ar "space"; "final zero"};
{\ar "given cover"; "space"};
{\ar_{\sm{\left[
\begin{array}{cc}
 0 & \sm{\fn{22}-\fn{11}} \\
 0 & \sm{-\fn{11}+\fn{12}+\fn{21}-\fn{22}} \\
\end{array}
\right]}} "step 1 deg 2"; "given cover"};
{\ar^{[\partial \; \; \; 0]} "step 1 deg 3"; "given cover"};
{\ar^{[0 \; \; \; \sm{\fn{11}} \,]} "step 2 deg 1"; "step 1 deg 2"};
{\ar^{[\partial \; \; \; 0]} "step 2 deg 3"; "step 1 deg 2"};
{\ar^{[\partial]} "step 2 deg 4"; "step 1 deg 3"};
{\ar "step 3 deg 4"; "step 2 deg 3"};
{\ar "step 3 deg 5"; "step 2 deg 4"};
\end{xy}
\ee
\bibliographystyle{alphaurl}
\bibliography{references}

\begin{thebibliography}{EHKR10}

\bibitem[AB05]{HomAlgGamma}
Jakub Antosz and Stanislaw Betley.
\newblock Homological algebra in the category of {$\Gamma$}-modules.
\newblock {\em Comm. Algebra}, 33(6):1913--1936, 2005.
\newblock \href {http://dx.doi.org/10.1081/AGB-200063342}
  {\path{doi:10.1081/AGB-200063342}}.

\bibitem[CEF12]{FIMod}
Thomas Church, Jordan~S. Ellenberg, and Benson Farb.
\newblock {FI}-modules: a new approach to stability for
  {$S_n$}-representations, 2012.
\newblock \href {http://arxiv.org/abs/1204.4533} {\path{arXiv:1204.4533}}.

\bibitem[CEFN12]{FINoether}
Thomas Church, Jordan~S. Ellenberg, Benson Farb, and Rohit Nagpal.
\newblock {FI}-modules over noetherian rings, 2012.
\newblock \href {http://arxiv.org/abs/1210.1854} {\path{arXiv:1210.1854}}.

\bibitem[CF13]{ChurchFarbRTHS}
Thomas Church and Benson Farb.
\newblock Representation theory and homological stability.
\newblock {\em Adv. Math.}, 245:250--314, 2013.
\newblock \href {http://dx.doi.org/10.1016/j.aim.2013.06.016}
  {\path{doi:10.1016/j.aim.2013.06.016}}.

\bibitem[CFP12]{unstable}
Thomas Church, Benson Farb, and Andrew Putman.
\newblock A stability conjecture for the unstable cohomology of {$SL_n
  \mathbb{Z}$}, mapping class groups, and {$Aut(F_n)$}, 2012.
\newblock \href {http://arxiv.org/abs/1208.3216} {\path{arXiv:1208.3216}}.

\bibitem[Dol58]{doldclassical}
Albrecht Dold.
\newblock Homology of symmetric products and other functors of complexes.
\newblock {\em Ann. of Math. (2)}, 68:54--80, 1958.

\bibitem[EHKR10]{RealModuli}
Pavel Etingof, Andr{\'e} Henriques, Joel Kamnitzer, and Eric~M. Rains.
\newblock The cohomology ring of the real locus of the moduli space of stable
  curves of genus 0 with marked points.
\newblock {\em Ann. of Math. (2)}, 171(2):731--777, 2010.
\newblock \href {http://dx.doi.org/10.4007/annals.2010.171.731}
  {\path{doi:10.4007/annals.2010.171.731}}.

\bibitem[LS14]{comboeq}
Stephen Lack and Ross Street.
\newblock Combinatorial categorical equivalences, 2014.
\newblock \href {http://arxiv.org/abs/1402.7151} {\path{arXiv:1402.7151}}.

\bibitem[Ols85]{FISharp85}
Grigori Olshanski.
\newblock Unitary representations of the infinite symmetric group: a semigroup
  approach.
\newblock {\em Representations of Lie groups and Lie algebras (A.A. Kirillov,
  ed.)}, pages 181--198, 1985.

\bibitem[Pir00]{PiraDoldKan}
Teimuraz Pirashvili.
\newblock Dold-{K}an type theorem for {$\Gamma$}-groups.
\newblock {\em Math. Ann.}, 318(2):277--298, 2000.
\newblock \href {http://dx.doi.org/10.1007/s002080000120}
  {\path{doi:10.1007/s002080000120}}.

\bibitem[PR02]{HochFunctorHomology}
T.~Pirashvili and B.~Richter.
\newblock Hochschild and cyclic homology via functor homology.
\newblock {\em $K$-Theory}, 25(1):39--49, 2002.
\newblock \href {http://dx.doi.org/10.1023/A:1015064621329}
  {\path{doi:10.1023/A:1015064621329}}.

\bibitem[Put12]{homcong}
Andrew Putman.
\newblock Stability in the homology of congruence subgroups, 2012.
\newblock \href {http://arxiv.org/abs/1201.4876} {\path{arXiv:1201.4876}}.

\bibitem[Rie14]{CategoricalHomotopyTheory}
Emily Riehl.
\newblock Categorical homotopy theory, April 2014.
\newblock \url{http://www.math.harvard.edu/~eriehl/cathtpy.pdf}.

\bibitem[Seg74]{SegalClassical}
Graeme Segal.
\newblock Categories and cohomology theories.
\newblock {\em Topology}, 13:293--312, 1974.

\bibitem[SS12]{GLEquivariantInfinite}
Steven~V Sam and Andrew Snowden.
\newblock {$GL$}-equivariant modules over polynomial rings in infinitely many
  variables, 2012.
\newblock \href {http://arxiv.org/abs/1206.2233} {\path{arXiv:1206.2233}}.

\bibitem[SS13]{StabilityReps}
Steven~V Sam and Andrew Snowden.
\newblock Stability patterns in representation theory, 2013.
\newblock \href {http://arxiv.org/abs/1302.5859} {\path{arXiv:1302.5859}}.

\bibitem[Sta97]{EnumCombo1}
Richard~P. Stanley.
\newblock {\em {Enumerative combinatorics. Volume 1}}.
\newblock Cambridge University Press, 2nd edition, June 1997.

\bibitem[Sta01]{EnumCombo2}
Richard~P. Stanley.
\newblock {\em {Enumerative Combinatorics, Volume 2}}.
\newblock Cambridge University Press, 1 edition, June 2001.

\end{thebibliography}
\end{document}